\numberwithin{equation}{section}
\newcommand{\R}{\mathbb{R}}
\newcommand{\C}{\mathcal{C}}
\newcommand{\n}{\mathbf{n}}
\newcommand{\CL}{\mathcal{L}}
\newcommand{\CB}{\mathcal{B}}
\def\Xint#1{\mathchoice
    {\XXint\displaystyle\textstyle{#1}}%
     {\XXint\textstyle\scriptstyle{#1}}%
     {\XXint\scriptstyle\scriptscriptstyle{#1}}%
     {\XXint\scriptstyle\scriptscriptstyle{#1}}%
	\!\int}
\def\XXint#1#2#3{{\setbox0=\hbox{$#1{#2#3}{\int}$}
	\vcenter{\hbox{$#2#3$}}\kern-.5\wd0}}
\newtheorem{theorem}{Theorem}[section]
\theoremstyle{definition}
\newtheorem{definition}[theorem]{Definition}
\theoremstyle{definition}
\theoremstyle{plain}
\newtheorem{thm}{Theorem}[section]
\newtheorem{lemma}[thm]{Lemma}
\newtheorem{prop}[thm]{Proposition}
\newtheorem{cor}[thm]{Corollary}
\newtheorem{remark}[thm]{Remark}
\begin{document}

\title[Venttsel problems with discontinuous data]{Venttsel boundary value problems with discontinuous data}

\author[D.E. Apushkinskaya, A.I. Nazarov, D.K. Palagachev, L.G. Softova]{Darya E. Apushkinskaya\and Alexander I. Nazarov\and Dian K. Palagachev\and Lubomira G. Softova}

\address{Darya E. Apushkinskaya: Saarland University, Saarbr\"ucken, Germany; Peoples' Fri\-end\-ship University of Russia (RUDN University), Moscow and Chebyshev Laboratory, St. Petersburg State University, St. Petersburg, Russia}
\email{darya@math.uni-sb.de}

\address{Alexander I. Nazarov: PDMI RAS and St. Petersburg State University, St. Petersburg, Russia}
\email{al.il.nazarov@gmail.com}

\address{Dian K. Palagachev: Polytechnic University of Bari, Department of Mechanics, Mathematics and Management, 70125 Bari, Italy}
\email{dian.palagachev@poliba.it}

\address{Lubomira G. Softova: University of Salerno, Department of Mathematics, 84084 Salerno, Italy}
\email{lsoftova@unisa.it}

\keywords{Second order elliptic equations; Quasilinear; Venttsel problem; \textit{VMO}; \textit{A~priori} estimates; Maximal regularity; Strong solvability}

\subjclass[2010]{Primary 35R05, 35B45; Secondary 35J25, 35J66, 60J60, 91G80}

\begin{abstract}
We study linear and quasilinear Venttsel boundary value problems involving elliptic operators with discontinuous coefficients. On the base of the \textit{a~priori} estimates obtained, maximal regularity and strong solvability in Sobolev spaces are proved. 
\end{abstract}

\date{\today}
     
\maketitle

\hfill {\it Dedicated to Nina N. Uraltseva with admiration     }

\section{Introduction}\label{sec1}

The paper deals with \textit{discontinuous} Venttsel boundary value problems for linear and quasilinear  second-order elliptic equations. The discontinuity regards the coefficients of the differential operators acting inside and on the boundary of the underlying domain, and is expressed in terms of appurtenance of the principal coefficients to the class of functions with \textit{vanishing mean oscillation (VMO),} while 
optimal Lebesgue integrability requirements are imposed on the lower-order coefficients. We consider \textit{strong solutions} belonging to Sobolev spaces with optimal exponent ranges and develop $L^p$-regularity and  solvability theory for such problems.

The history of the Venttsel BVPs  goes back to the pioneering work \cite{V59} where, given an elliptic operator $\mathcal{L}$ in a bounded domain $\Omega\subset\mathbb{R}^n,$ A.D.~Venttsel found  general  boundary conditions, given in terms of a  second-order integro-differential operator,  which restrict $\mathcal{L}$ to an infinitesimal generator of a Markov process in $\Omega.$ From a probabilistic point of view, 
the Venttsel conditions\footnote{Notice that Venttsel conditions are known in the literature also as \textit{Wentzell} or \textit{Ventcel'} conditions. We mention also that Venttsel type conditions occur for all types of second-order equations -- elliptic, parabolic and hyperbolic; we discuss here only the elliptic case.} are the \textit{most general admissible boundary conditions}, where the differential terms
describe the phenomena of diffusion along the boundary, absorption,  reflection and viscosity, while the non-local integral terms represent jumps of the process along $\partial\Omega$ and inward jumps from $\partial\Omega$ into $\Omega$ (see also \cite{IW89} and \cite{W79}). 

We consider here the pure local case when the Venttsel conditions are given by a combination of second-order differential operator {\it along the boundary} and a full gradient term. Even in such settings, the Venttsel conditions
 include as particular cases the Dirichlet, Neumann, oblique derivative and mixed (Robin) boundary conditions.
 
The Venttsel problems describe many physical processes in media surrounded by a thin film, and appear in various branches of science, technology and industry: e.g. in water wave theory (\cite{Sh80}), electromagnetic and phase-transition phenomena (\cite{SV95} and \cite{HS74}), elasticity theory problems (\cite{MNaPr86}), engineering problems of hydraulic fracturing (\cite{CaMe71}), models of fluid diffusion (\cite{NaPi93}), as well as in some climate models or non-isothermal phase separation in a confined container (\cite{GGM08,GM09}), and in various aspects of financial mathematics (\cite[Chapter~8]{Shi99}). Some simple physical models leading to problems with Venttsel boundary conditions can be also found in  \cite{AN97}, \cite{LN98} and \cite{Gol06}. Moreover, the Venttsel problem is the simplest case of systems connected on manifolds of different dimensions (cf. \cite{NaPi93}), and it also provides an example of problems on stratified sets (see, for instance, \cite{Pe98}).

There is a vast literature devoted to linear and semilinear Venttsel problems 
in both local and non-local settings, see e.g. \cite{LuT91}, \cite{AN01}, \cite{GaSk01}, \cite{La02}, \cite{La03}, \cite{CF3GOR09}, \cite{NP15}, \cite{CLNV19} and the references therein. 
The study of \textit{quasilinear} problems with Venttsel boundary conditions was initiated  by Y.~Luo in \cite{Lu91} and continued later in a series of publications by D.E.~Apushkinskaya and A.I.~Nazarov. A detailed survey on the ``quasilinear'' results obtained up to 1999 can be found in \cite{AN00}. We mention also a series of papers dealing with \textit{two-phase quasilinear} Venttsel problems (\cite{AN00a,AN02}) where  the Venttsel condition is given on an interface separating the domain in two parts, and \textit{degenerate quasilinear}  Venttsel problems (\cite{AN97,AN98,N04}) where the thickness of the surrounding film can become zero on a subset of the boundary. 

Notice that all the results mentioned above   concern equations and boundary conditions with leading terms that depend at least \textit{continuously} on the independent variable $x.$

\smallskip

The first relevant $W^2_p$-theory of linear elliptic operators $a^{ij}(x)D_iD_j$ with \textit{discontinuous} coefficients is due to F.~Chiarenza, M.~Frasca and P.~Longo. In their pioneer  works \cite{CFL91, CFL93} the authors allowed discontinuity of $a^{ij}$s, taking these in the Sarason class \textit{VMO}, 
that contains only as a proper subset the space of uniformly continuous functions (see Section~\ref{sec2} for more details). It is proved in  \cite{CFL91, CFL93} that $a^{ij}\in \textit{VMO}$ ensures the validity of the Calder\'on--Zygmund property for \textit{any} $p>1.$ Namely, if $u\in W^2_q(\Omega)$ with $q\in(1,p)$ and $u=0$ on $\partial\Omega$ in the sense of traces, then $a^{ij}(x)D_iD_ju\in L^p(\Omega)$  implies $u\in W^2_p(\Omega).$ This is a crucial  point in the $L^p$-theory of PDEs allowing to extend the results for operators with continuous coefficients (cf. \cite{LU68,GT01}) to discontinuous ones.  
Moreover, since the  space $W^1_n(\Omega)$ and the fractional Sobolev space $W^{\vartheta}_{n/\vartheta}(\Omega),$ $\vartheta\in (0,1),$ are both contained in \textit{VMO},  the \textit{VMO}-discontinuity  of the coefficients makes these results more general then those obtained before (see \cite{CFL93} and \cite[Chapter~1]{MPS00} for more references).
The technique in \cite{CFL91, CFL93} is based on  an explicit representation formula for the  derivatives $D_iD_ju$ via Calder\'on--Zygmund singular integrals $\mathcal{K} f$ and their commutators $\C[a^{ij}, D_iD_j]=a^{ij}\mathcal{K}D_iD_j-\mathcal{K}(a^{ij}D_iD_j),$  
and the vanishing property of the \textit{VMO}-moduli of $a^{ij}$s permits to make the commutator norm small enough and hence to obtain an \textit{a~priori} estimate for the strong  solutions. Combining this estimate with a fixed point theorem arguments, the authors of \cite{CFL91, CFL93} obtain unique strong solvability of the Dirichlet problem
$$
a^{ij}(x)D_iD_ju=f(x)\quad \text{a.e. in}\ \Omega,\qquad
u=0\quad \text{on}\ \partial\Omega
$$
in $W^2_p(\Omega)$ for every $f \in L^p(\Omega)$ provided  $p\in(1,\infty)$.

Similar regularity and strong solvability theory have been developed in \cite{FaP96a} and \cite{MP98} for linear oblique derivative problem for  uniformly elliptic operators with \textit{VMO} principal coefficients.

Combining the results of \cite{CFL91, CFL93} with the Aleksandrov--Bakel'man maximum principle, suitable \textit{a~priori} gradient estimates have been obtained in \cite{P95} for the strong solutions to \textit{quasilinear} elliptic equations with \textit{VMO} principal coefficients. As consequence, $W^2_n$-solvability of the \textit{quasilinear} Dirichlet problem was proved in \cite{P95}. In \cite{FaP96a} similar results were obtained for the oblique derivative problem.

\smallskip

In the present paper we develop 
a strong solvability theory for linear and quasilinear elliptic Venttsel problems  with \textit{discontinuous}  coefficients. We deal with \textit{strong} solutions 
lying in the space $V_{p,q}(\Omega)=W^2_p(\Omega)\cap W^2_q(\partial\Omega)$ that satisfy the interior and the boundary equations almost everywhere.

For the linear case, the exponents $p$ and $q$ vary in the full range $(1,\infty),$ restricted only by a natural requirement (cf. \eqref{pq-cond}) ensuring trace compatibility. The principal coefficients of  both the domain and boundary operators are taken in  \textit{VMO}, while \textit{optimal}  integrability in Lebesgue or Orlicz spaces is assumed for the lower-order coefficients.  
The analytic core here is Theorem~\ref{apriori-estimate} that provides a coercive \textit{a~priori} estimate in $V_{p,q}(\Omega)$ for any solution to the linear Venttsel problem. The proof relies on the results of \cite{CFL91,CFL93}, fine interpolation between various Sobolev spaces, depending on the admissible combinations of $p$ and $q,$ and $W^2_p$-bounds for suitable extension operators (see Theorem~\ref{extension-theorem}). The coercive estimate implies the \textit{Fredholm property}, which, in turn, provides the 
 \textit{improving of integrability} property for the linear Venttsel problem (Theorem~\ref{regular}). Finally, a comparison principle of  \textit{Aleksandrov--Bakel'man} type (Theorem~\ref{global-max-principle}) allows to derive unique strong solvability for all admissible values of $p$ and $q.$


The natural functional framework for the solutions of the \textit{quasilinear} Venttsel problem is the space $V_{n,n-1}(\Omega).$ Due to technical difficulties, we restrict ourselves to the case when the principal coefficients both of the equation and the boundary condition are \textit{independent} of the gradient
of solution.
However, these depend on the solution itself and 
exhibit \textit{discontinuities} in the independent variable $x,$ measured in terms of
\textit{VMO}. The lower-order terms support quadratic gradient growth  and may have unbounded singularities in $x$ with a proper control of the Lebesgue or Orlicz integrability. The existence approach relies on the Leray--Schauder fixed point theorem that reduces the solvability to suitable \textit{a~priori} estimates for all possible solutions to a  family of Venttsel problems. In our case these are bounds for the $L^{2n}(\Omega)$-norm of the \textit{full} gradient and the $L^{2(n-1)}(\partial\Omega)$-norm of the \textit{tangential} gradient (Theorem~\ref{Du-est}). To get such estimates, we adapt to the specific Venttsel situation a homotopy-type machinery of Amann and Crandall~\cite{AC78} which reveals very useful when dealing with discontinuous quasilinear operators.
This approach requires also estimates for the $L^\infty$ and the H\"older norms of the solution. The first is obtained in Lemma~\ref{M0}, while the second one 
follows for free from \cite{AN95a}. As a result, strong solvability of the quasilinear Venttsel problem (Theorem~\ref{quasilinear-existence}) does follow.

\smallskip
The  paper is organized as follows. In Section~\ref{sec2} we provide the necessary notation, collect the basic facts about \textit{VMO} spaces, and prove an auxiliary result about extension operators in Sobolev spaces. In Sections~\ref{sec3} and \ref{sec4} we deal with linear and quasilinear Venttsel problems, respectively, and we assume here that the dimension of the underlying domain $\Omega$ is at least $3$. The 2D case is more simple, and we briefly discuss it in Section~\ref{sec5}. Some remarks about possible generalizations of our results are also given there, together with several open problems and indications for further research.

\section{Auxiliary results}\label{sec2}

\subsection{Notation and conventions}
Throughout the paper we use the following notation:

$x=(x',x_n)=(x_1,\dots,x_{n-1},x_n)$ is a vector in
${\mathbb R}^n$ with Euclidean norm $|x|;$

$\mathbb{R}^n_+=\{x\in \mathbb{R}^n : x_n>0\};$

$\Omega$ is a bounded domain in $\R^n$ with compact closure $\overline{\Omega}$ and $(n-1)$-dimensional boundary $\partial\Omega;$

$\Gamma (\Omega)$ is the part of $\partial \Omega$ lying on the hyperplane $\{x_n=0\};$

$\n=\n (x)$ is the unit vector of the outward normal to $\partial \Omega$ at the point $x;$

$B_r(x^0)$ is the open ball in $\mathbb{R}^n$ with center
$x^0$ and radius $r;$  $B_r=B_r(0);$  $B_r'(x^0)=B_r(x^0) \cap \{x_n=0\};$  $B_r'=B_r'(0);$

The  indices $i$ and $j$ run from $1$ to $n$ 
and we adopt the standard convention regarding summation with respect to repeated indices;

$D_i$ denotes the operator of (weak) differentiation with respect
to  $x_i;$

$Du=\left( D'u,D_nu\right) =\left( D_1u, \dots,D_{n-1}u,D_nu\right) $ is the gradient of $u;$

$d_i$ denotes the tangential differential operator on $\partial \Omega$, i.e.,
$$
d_i=D_i-\n_i \n_j D_j;
$$
$du=(d_iu)$ is the tangential gradient of $u$ on $\partial \Omega;$ in particular,  we have $du=(D'u,0)$ on $\Gamma (\Omega);$

$
\|\cdot\|_{p,\Omega}
$ denotes the norm in $L^p(\Omega);$

$W^{1}_p(\Omega)$ and 
$W^{2}_p(\Omega)$ are the Sobolev space with norms
$$
\|u\|_{W^1_p(\Omega)}=\|Du\|_{p,\Omega}+\|u\|_{p,\Omega}\quad \text{and}\quad 
\|u\|_{W^2_p(\Omega)}=\|D(Du)\|_{p,\Omega}+\|u\|_{p,\Omega}
$$
respectively; 
similarly, the symbols $W^1_p(\partial \Omega)$ and $W^2_p(\partial \Omega)$ stand for the So\-bo\-lev spaces of functions defined on $\partial\Omega$ and equipped with the corresponding norms
$$
\|u\|_{W^1_p(\partial \Omega)}=\|du\|_{p,\partial\Omega}+\|u\|_{p,\partial\Omega},\qquad 
\|u\|_{W^2_p(\partial \Omega)}=\|d(du)\|_{p,\partial\Omega}+\|u\|_{p,\partial\Omega};
$$

We also define $V_{p,q}(\Omega)$ as the subspace of $W^2_p(\Omega)$ consisting of all functions $u$  that have
 traces in $W^2_q(\partial \Omega)$, with the norm 
$$
\|u\|_{V_{p,q}(\Omega)}=\|u\|_{W^2_p (\Omega)}+\|u\|_{W^2_q (\partial\Omega)}.
$$

We denote by
${\mathcal C}(\overline\Omega)$ and ${\mathcal C}^1(\overline\Omega)$ the spaces of continuous and continuously differentiable functions, respectively. In a similar way, we introduce the spaces ${\mathcal C}(\partial\Omega)$ and ${\mathcal C}^1(\partial\Omega).$

${\mathcal C}^{0,\lambda}(\overline{\Omega})$ is the space of H{\"o}lder continuous functions with exponent $\lambda \in (0,1],$ and with norm 
$$
\|u\|_{{\mathcal C}^{0,\lambda}(\overline\Omega)}=\sup_{\Omega}|u|+\sup_{x,y\in \Omega}
\frac{|u(x)-u(y)|}{|x-y|^{\lambda}}.
$$
${\mathcal C}^{1,\lambda}(\overline{\Omega})$ is the space of functions with first-order  derivatives belonging to ${\mathcal C}^{0,\lambda}(\overline{\Omega})$.

For a functional space ${\mathcal X}$, we  understand the notation $\partial\Omega \in {\mathcal X}$ as follows. There is a positive $R_0$ such that for every $x^0 \in \partial\Omega$ the set $\partial\Omega \cap B_{R_0}(x^0)$ is (in a suitable Cartesian coordinate system) a graph $y_n=f(y')$ of a function $f\in {\mathcal X}$.
When saying that a given constant depends on 
``\textit{the properties of $\partial\Omega$}'' we simply mean dependence on the $\mathcal X$-norms of the 
diffeomorphisms that flatten 
locally $\partial\Omega,$  and on the area of $\partial\Omega.$

By $p^*$ we denote the Sobolev conjugate of the exponent $p$, that is,
$$
p^*:=
\begin{cases} 
\dfrac{np}{n-p} & \text{if}\ p<n,\\
\infty & \text{if}\ p\ge n.
\end{cases}
$$

We use the letters $C$ and $N$ (with or without indices) to denote various constants. To indicate that $C$ depends on some parameters, we list these in parentheses: $C(\dots)$. 
Finally, we set $\frac{0}{0}=0,$ if such an uncertainty occurs.

\subsection{\textit{VMO} functions}

We will deal here with differential operators with discontinuous principal coefficients belonging to the Sarason class of functions with mean oscillation that vanishes over shrinking balls.   In \cite{JN61} John and Nirenberg introduced the space of functions with {\it bounded mean oscillation (BMO).}  Their paper has been followed by various works exhibiting the importance of the \textit{BMO} functions in the harmonic analysis (see \cite{Sar75} and the references therein). Later, Sarason \cite{Sar75}  attracted the attention to a natural subspace of  \textit{BMO},  called \textit{VMO,} consisting of the functions with  {\it  vanishing mean oscillation}.  Let us give a precise definition of these spaces.
\begin{definition}(\cite{JN61,Sar75})\label{defVMO}
A locally  integrable function $f$ defined on $\R^n$ 
lies in \textit{BMO} if its integral oscillation  is bounded, that is, if
$$
\|f\|_*:=\sup_{B} \, \Xint-_{B} \big|f(x)-f_B\big|\,dx<\infty,
$$
where $B$ varies in the class of \textit{all} balls in $\mathbb{R}^n$ and $f_B$ stands for the integral average $|B|^{-1}\int_B f(x)\,dx.$ Modulo constant functions, 
\textit{BMO} becomes a Banach space under the norm 
 $\|f\|_*.$  

For a function $f\in \textit{BMO},$ define
\begin{equation}\label{eq_gamma}
\omega_f(r)=\sup_{\rho\le r} \, \Xint-_{B_\rho} \left|f(x)-f_{B_\rho}\right|\,dx,
\end{equation}
where $B_\rho$ varies now in the class of \textit{all} balls of radius $\rho.$ Then $f\in \textit{VMO}$ if 
$$
\lim_{r\to 0} \omega_f(r) =0
$$
and we refer to $\omega_f(r)$ as \textit{VMO}-modulus of  $f$.

For a bounded domain $\Omega\subset \R^n,$  the spaces $\textit{BMO}\,(\Omega)$  and $\textit{VMO}\,(\Omega)$   are defined in the same manner, replacing $B$ and $B_\rho$ above by the respective intersections with $\Omega.$ 
Similarly, if $\partial\Omega$ is smooth, the spaces $\textit{BMO}\,(\partial\Omega)$  and $\textit{VMO}\,(\partial\Omega)$ are defined in a natural way  by surface integral oscillations over $B\cap\partial\Omega$ and 
 $B_\rho\cap \partial\Omega$ with balls centered at points of $\partial\Omega.$
\end{definition}

Having a function $f\in \textit{VMO}\,(\Omega)$ given on a Lipschitz  domain,  it is possible to extend it to the whole $\R^n$ by preserving the \textit{VMO}-modulus as follows by results of  Acquistapace \cite{A92}.

It is worth to mention some examples that illustrate the embeddings between  \textit{VMO/BMO}  and  some classical function spaces.
Note that \textit{BMO} functions are not necessarily bounded but  the space of the bounded uniformly continuous  functions  belongs to \textit{VMO} and we can take as  \textit{VMO}-modulus the corresponding modulus of continuity. Further on, $W^1_n(\R^n)$ is a proper  subset of \textit{VMO} as it follows by the Poincar\'e inequality. However, if
 $f\in W^1_1(\R^n)$  with a gradient belonging to the Marcinkiewicz space $L^n_{\mathrm{weak}},$ then $f$ belongs to \textit{BMO} but not necessarily  to \textit{VMO}. This can be seen also by the following examples (cf. \cite{JN61}, \cite[Section~2.1]{MPS00}).          

Let $f(x)=\log|x|$ and set $f_\alpha(x)= |\log|x||^\alpha$ with $\alpha>0$.  Then
\begin{itemize}
\item{}    $f\in \textit{BMO}\setminus \textit{VMO}$, $\sin (f)\in \textit{BMO}\cap L^\infty;$
\item{}    $f_\alpha\in \textit{VMO}$  for each $\alpha\in(0,1)$, but $f_\alpha\in W^1_n(\R^n)$ \textit{only} if $\alpha\in (0, 1-\frac1n).$  
\end{itemize}

In \cite{Sar75}  Sarason gave alternative descriptions of \textit{VMO} which explain the wide  application  of this function space not only in the harmonic analysis but also  in the theory of PDEs,  stochastic theory,    etc. 
\begin{prop}\label{thSarason}
 {\em (\cite{Sar75})} For $f\in \textit{BMO}$, the following conditions are equivalent:
\begin{enumerate}
\item $f\in \textit{VMO};$
\item $f$ is in the \textit{BMO}-closure of bounded uniformly continuous functions;
\item $\lim_{y\to 0}\| f(\cdot-y) -f(\cdot)\|_*=0$.
\end{enumerate}
\end{prop}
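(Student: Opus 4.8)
The plan is to prove the cyclic chain of implications $(1)\Rightarrow(3)\Rightarrow(2)\Rightarrow(1)$, which yields the full equivalence. Two of these links are soft; the real work is concentrated in $(1)\Rightarrow(3)$. Throughout I fix a standard mollifier, i.e. a nonnegative $\phi\in\C^\infty$ supported in the unit ball with $\int\phi=1$, set $\phi_\delta(x)=\delta^{-n}\phi(x/\delta)$, and write $f_\delta=f*\phi_\delta$.

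First the easy links. For $(2)\Rightarrow(1)$, suppose $f$ is a \textit{BMO}-limit of bounded uniformly continuous functions $g_k$. For each $k$ one has $\omega_f(r)\le\|f-g_k\|_*+\omega_{g_k}(r)$, and since $g_k$ is uniformly continuous its mean oscillation over a ball of radius $r$ is dominated by its modulus of continuity at scale $2r$, so $\omega_{g_k}(r)\to0$ as $r\to0$; choosing $k$ large and then $r$ small gives $\omega_f(r)\to0$, that is $f\in\textit{VMO}$. For $(3)\Rightarrow(2)$, I start from the identity $f-f_\delta=\int\phi_\delta(y)\big(f-f(\cdot-y)\big)\,dy$ and apply Minkowski's inequality for the seminorm $\|\cdot\|_*$ to obtain
$$
\|f-f_\delta\|_*\le\int\phi_\delta(y)\,\|f-f(\cdot-y)\|_*\,dy\le\sup_{|y|\le\delta}\|f-f(\cdot-y)\|_*,
$$
which tends to $0$ as $\delta\to0$ by hypothesis $(3)$. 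Moreover $f_\delta$ is uniformly continuous: writing $Df_\delta(x)=\int\big(f(z)-f_{B_\delta(x)}\big)D\phi_\delta(x-z)\,dz$ and using $\int D\phi_\delta=0$ one gets $\|Df_\delta\|_{\infty,\R^n}\le C_n\,\delta^{-1}\|f\|_*<\infty$, so $f_\delta$ is globally Lipschitz. Thus each $f_\delta$ is a uniformly continuous function approximating $f$ in \textit{BMO}; on the bounded domains relevant here it is moreover bounded, which yields $(2)$.

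The crux is $(1)\Rightarrow(3)$: assuming $\omega_f(r)\to0$, I must show $\|f(\cdot-y)-f\|_*\to0$ as $y\to0$, \emph{uniformly over all balls}. The key auxiliary estimate — and the step I expect to be the main obstacle — is a covering lemma asserting that for every $s>0$ and every ball $B_\rho$ with $\rho\ge s$,
$$
\Xint-_{B_\rho}\big|f(x)-f_{B_s(x)}\big|\,dx\le C_n\,\omega_f(2s),
$$
which I would prove by covering $B_\rho$ with finitely overlapping balls of radius $s$ and comparing, on each of them, the sliding average $f_{B_s(x)}$ to the average over the enclosing ball of radius $2s$. Granting this, I split the supremum defining $\|f(\cdot-y)-f\|_*$ according to the radius $\rho$ of the competing ball. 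For $\rho\le|y|$, the triangle inequality and the translation invariance of $\omega_f$ give an oscillation bound $2\,\omega_f(\rho)\le2\,\omega_f(|y|)$. For $\rho>|y|$, I write
$$
f(x-y)-f(x)=\big(f(x-y)-f_{B_{|y|}(x)}\big)-\big(f(x)-f_{B_{|y|}(x)}\big)
$$
and estimate the mean over $B_\rho$ of each bracket by the covering lemma (the first after the substitution $z=x-y$ and a comparison of $B_{|y|}(z+y)$ with $B_{2|y|}(z)$), obtaining a bound $C_n\,\omega_f(C|y|)$ independent of $\rho$. Combining the two regimes yields $\|f(\cdot-y)-f\|_*\le C_n\,\omega_f(C|y|)$, which tends to $0$ as $y\to0$ precisely because $f\in\textit{VMO}$. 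This closes the chain and proves the proposition. The delicate point throughout is the uniformity in $\rho$: the naive triangle inequality only controls oscillations at small scales, and it is the covering lemma that upgrades the small-scale \textit{VMO} information into a bound valid simultaneously on balls of every radius.
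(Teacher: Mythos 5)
The paper offers no proof of this proposition at all --- it is quoted from Sarason \cite{Sar75} --- so your attempt can only be compared with the standard argument, and in its main lines you reproduce that argument correctly. The cycle $(1)\Rightarrow(3)\Rightarrow(2)\Rightarrow(1)$, with mollification for $(3)\Rightarrow(2)$ and a translation--oscillation comparison for $(1)\Rightarrow(3)$, is exactly Sarason's scheme. Your covering lemma is true as stated, and its proof sketch (finitely overlapping balls of radius $s$, comparison of the sliding average $f_{B_s(x)}$ with the average over an enclosing ball of radius $2s$) goes through; combined with the two-regime splitting in $\rho$ (modulo the harmless intermediate range $|y|<\rho<2|y|$, handled by either regime up to constants) it yields $\|f(\cdot-y)-f\|_*\le C_n\,\omega_f(C|y|)$, which is indeed the crux. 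The step $(2)\Rightarrow(1)$ and the Minkowski and Lipschitz estimates for $f_\delta$ are also correct.

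The genuine gap is the word \emph{bounded} in item (2), which you dismiss with ``on the bounded domains relevant here it is moreover bounded.'' That remark is not available to you: the proposition, like Definition~\ref{defVMO} on which it rests, concerns $f\in\textit{BMO}$ on all of $\R^n$, and there your mollifications $f_\delta$ are globally Lipschitz but in general unbounded --- and no other choice of approximants can repair this if the statement is read literally. Indeed, take $g(x)=\log(1+|x|)$: it is Lipschitz, hence lies in \textit{VMO} by Definition~\ref{defVMO} (its mean oscillation over balls of radius $\rho$ is at most $2\rho$), yet its averages over $B_{2^j}(0)$ grow like $j\log 2$, while for any $h\in L^\infty$ the standard doubling estimate $|u_{B_{2r}}-u_{B_r}|\le 2^n\|u\|_*$, applied to $u=g-h$ and telescoped over $j$ doublings, forces $\|g-h\|_*\ge 2^{-n}\log 2$. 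So $g$ has positive \textit{BMO}-distance from every bounded function, and condition (2) with ``bounded'' taken at face value on $\R^n$ is strictly stronger than (1) and (3). What your argument actually proves --- and what Sarason's theorem asserts --- is the equivalence of (1) and (3) with membership in the \textit{BMO}-closure of the \emph{uniformly continuous} functions in \textit{BMO}; the ``bounded'' formulation is correct only in the setting where the paper in fact applies it, namely on a bounded domain $\Omega$ or a compact boundary $\partial\Omega$, where uniform continuity entails boundedness. To close the gap you should either state and prove the version with uniformly continuous approximants, or restrict the claim to the bounded-domain setting explicitly before invoking boundedness of $f_\delta$.
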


Let us note that the last circumstance in Proposition~\ref{thSarason} guarantees the good behavior of the mollifiers of \textit{VMO} functions. Moreover, we are able  to approximate the \textit{VMO} functions with $\C^\infty_0$ functions.

\subsection{An extension result}

The next statement is a modification of Theorem~6.1 in \cite{AN95} and allows to extend Sobolev functions defined on $\partial\Omega$ to Sobolev functions in the whole $\Omega.$

\begin{thm} \label{extension-theorem}
Let the exponents $q$ and $p$ be chosen such that
$$
1\le p \le \frac{nq}{n-1} \quad \text{and} \quad q >1,
$$
and let $\partial\Omega \in \C^{1,1}$.

Then there exists an extension operator
$$
\Pi\colon\ W^2_q (\partial\Omega) \rightarrow W^2_p(\Omega)
$$
such that
\begin{equation} \label{6.1-AN95}
\|\Pi u\|_{W^2_p (\Omega)} \le N_0 \|u\|_{W^2_q(\partial\Omega)},
\end{equation}
where $N_0$ depends only on $p$, $q$ and the properties of $\partial\Omega$.
\end{thm}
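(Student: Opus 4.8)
The plan is to reduce the whole statement to a model extension on the half-space $\R^n_+$ and to build $\Pi$ there by an explicit mollification. First I would cover $\partial\Omega$ by finitely many balls $B_{R_0}(x^k)$ on which, by the hypothesis $\partial\Omega\in\C^{1,1}$, the boundary is a graph $y_n=f(y')$ with $f\in\C^{1,1}$; fixing a subordinate partition of unity $\{\eta_k\}$ and flattening each piece by the associated diffeomorphism reduces the problem to finitely many identical problems of extending a function $u$ supported in a bounded part of $\{x_n=0\}\cong\R^{n-1}$ into the adjacent portion of $\R^n_+$. The regularity $\C^{1,1}$ is exactly what is needed: the flattening maps have bounded second derivatives, so both the transfer of $u$ to the hyperplane and the pull-back of the extension to $\Omega$ preserve the $W^2$-norms, with constants controlled by the properties of $\partial\Omega$. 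It therefore suffices to produce a bounded operator $W^2_q(\R^{n-1})\to W^2_p(\R^n_+)$ on compactly supported data.

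On the flat model I would use the scale-$x_n$ mollification
\begin{equation*}
\Pi u(x',x_n)=\zeta(x_n)\int_{\R^{n-1}}\phi(z')\,u(x'-x_nz')\,dz',
\end{equation*}
with $\phi\in\C^\infty_0(\R^{n-1})$, $\int\phi=1$, and a cutoff $\zeta$ equal to $1$ near $x_n=0$ that localizes the extension to a neighbourhood of the boundary; note $\Pi u|_{x_n=0}=u$. Differentiation shows the tangential derivatives pass through the kernel, while each derivative in $x_n$ brings down a factor $z'$ that, after relabelling, turns into a tangential derivative of $u$. Thus every second derivative of $\Pi u$ at height $x_n$ is a scale-$x_n$ mollification of the \emph{second tangential derivatives} $d_id_ju$ of $u$ against bounded, compactly supported kernels, modulo lower-order terms produced by differentiating $\zeta$. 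This is the structural core of the argument: it reduces matters to estimating in $L^p(\R^n_+)$ operators of the form $g\mapsto (K_{x_n}*g)(x')$ with $K_{x_n}(w')=x_n^{-(n-1)}K(w'/x_n)$ and $g=|d(du)|\in L^q(\R^{n-1})$.

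For such an operator Young's inequality gives $\|K_{x_n}*g\|_{L^p(\R^{n-1})}\le\|K_{x_n}\|_{L^r}\|g\|_{L^q}$ with $1+\tfrac1p=\tfrac1q+\tfrac1r$, and since $\|K_{x_n}\|_{L^r}=C\,x_n^{-(n-1)(1/q-1/p)}$, raising to the $p$-th power and integrating in $x_n$ over a bounded interval converges precisely when $(n-1)(1/q-1/p)\,p<1$, i.e. when $p<\tfrac{nq}{n-1}$. Under this strict inequality the bound \eqref{6.1-AN95} follows, the lower-order contributions (the terms $\Pi u$ and $D\Pi u$, and the pieces coming from $\zeta$) being handled the same way with room to spare, as they reduce to mollifications of lower-order, hence more integrable, tangential derivatives of $u$.

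The main obstacle is the \emph{endpoint} $p=\tfrac{nq}{n-1}$, which the statement includes. A scaling check --- the operator $g\mapsto K_{x_n}*g$ intertwines the dilation $g(\cdot)\mapsto g(\lambda\,\cdot)$ on $\R^{n-1}$ with the full dilation on $\R^n_+$ --- identifies $p=\tfrac{nq}{n-1}$ as exactly the scaling-critical exponent, at which the $x_n$-integral above diverges only logarithmically and Young's inequality no longer suffices. Here I would replace the crude bound by a scaling-invariant one: either invoke the Hardy--Littlewood--Sobolev inequality for the resulting critical fractional-integration operator, or, equivalently, factor the model extension as the composition of the limiting Sobolev embedding $W^2_q(\R^{n-1})\hookrightarrow B^{2-1/p}_{p,p}(\R^{n-1})$, valid precisely because $2-\tfrac{n-1}{q}\ge 2-\tfrac np$ (i.e. $p\le\tfrac{nq}{n-1}$, with a strict loss of smoothness), with a bounded Besov extension $B^{2-1/p}_{p,p}(\R^{n-1})\to W^2_p(\R^n_+)$. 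Once the endpoint is secured, reassembling the local pieces through the partition of unity $\{\eta_k\}$ and pulling back across the flattening diffeomorphisms yields the global operator $\Pi$ together with \eqref{6.1-AN95}.
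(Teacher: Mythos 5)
Your proposal is correct, and at the decisive point it coincides with the paper's own argument. The paper reduces immediately to the endpoint $p=\frac{nq}{n-1}$ and proves the flat model case by exactly your second endpoint option: the critical embedding $W^2_q(\R^{n-1})\hookrightarrow \mathbf{B}^{2-1/p}_{p,p}(\R^{n-1})$ followed by the Besov-to-Sobolev extension $\mathbf{B}^{2-1/p}_{p,p}(\R^{n-1})\rightarrow W^2_p(\R^n)$ (both quoted from Triebel), then cut-offs, flattening via $y_n=x_n-\omega(x')$ with $\omega\in W^2_\infty$ granted by $\partial\Omega\in\C^{1,1}$, and gluing through a partition of unity --- your geometric reduction (cover, flatten, transplant, glue) is identical. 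What you add is genuinely different only below the critical line: the explicit scale-$x_n$ mollification together with Young's inequality gives an elementary, self-contained proof for $p<\frac{nq}{n-1}$, a case the paper simply absorbs into the endpoint via $L^{p_1}\subset L^{p_2}$ on bounded sets; your scaling computation correctly identifies $p=\frac{nq}{n-1}$ as the critical exponent where this crude bound degenerates logarithmically.

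Two small caveats, neither fatal. First, Young's inequality in the form $\|K_{x_n}*g\|_{L^p}\le\|K_{x_n}\|_{L^r}\|g\|_{L^q}$ requires $p\ge q$, while the hypotheses allow $p<q$; for those values you should either reduce to the maximal $p$ at the outset (as the paper does) or use the compact supports produced by your localization to lower the integrability exponent of $g$ before convolving. Second, your first endpoint option (``invoke Hardy--Littlewood--Sobolev'') is not literally an application of HLS on a single Euclidean space, since the operator maps $L^q(\R^{n-1})$ into $L^p(\R^n_+)$: one must pass to the adjoint, dominate it pointwise by the Riesz potential $I_1$ of order one in $\R^n$ restricted to the hyperplane, and then invoke the \emph{critical trace} inequality $\|v\|_{L^{q'}(\R^{n-1})}\le C\|Dv\|_{L^{p'}(\R^n)}$, legitimate here because $p'<n$ (indeed $p=\frac{nq}{n-1}>\frac{n}{n-1}$ as $q>1$), but itself a nontrivial endpoint result that deserves to be stated. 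Since your fallback at the endpoint is precisely the paper's Besov factorization, the proof as a whole is sound.
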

\begin{proof} 
It is easy to see that it suffices to prove the theorem when $p=\frac{nq}{n-1}$.

\textit{Step 1.} We start with a  procedure constructing an extension operator from a flat boundary surface to a boundary strip that acts continuously from the space $\stackrel{\circ}{W}\vphantom{W}\!^2_q(B_R')$ into the space $W^2_p(B_R' \times (0,R))$.
Here $\stackrel{\circ}{W}\vphantom{W}\!^2_q(B_R')$ stands for the closure of $\mathcal{C}^\infty_0(B_R')$ with respect  to the norm in $W^2_q,$ and we assume that the function $u$ is extended as zero to the whole $\R^{n-1}.$

Now successive application of some statements from \cite{Tri78} yields the following\\[6pt]
\begin{tabular}{rlllll}
& \cite[formula (2.8.1/18)]{Tri78}: &\ & $W^2_q(\R^{n-1}) \rightarrow \mathbf{B}^{2-1/p}_{p,p}(\R^{n-1})$ & \ & (embedding), \\[2pt]
& \cite[Theorem 2.9.3 (a)]{Tri78}: &\ & $\mathbf{B}^{2-1/p}_{p,p}(\R^{n-1}) \rightarrow W^2_p (\R^n)$ & \ &
(extension). \\[-8pt]
& & & & &
\end{tabular}

\noindent
(Here the notation $\mathbf{B}$ of the Besov spaces corresponds to the book \cite{Tri78}).

Note that multiplying by a suitable cut-off function, 
the following properties can be ensured
\begin{enumerate}
\item the extended function is equal to $0$ for $|x_n|>R/2$;
\item if the initial function is equal to $0$ for $|x'|>R/2$, then the extended one is equal to $0$ for $|x'|>3R/4$.
\end{enumerate}
Moreover, the norm of the extension operator is bounded in terms of ${n}$, $q$, $p$ and $R$. 

\medskip

\textit{Step 2.} The condition $\partial\Omega \in \C^{1,1}$ implies that for each point $x^0 \in \partial\Omega$ there exists a Cartesian coordinate system with origin at $x^0$ satisfying the following conditions
\begin{enumerate}
\item the surface $\partial\Omega$ is tangent to the hyperplane $\{x_n=0\}$ at the point $x^0$;
\item the intersection of $\partial\Omega$ with the neighborhood $U_R=\big\{(x',x_n)\colon\ |x'|<R, |x_n|<R\big\}$ can be given by an equation $x_n=\omega (x')$ with $\omega \in W^2_\infty(B_R')$. 
\end{enumerate}
Moreover, the radius $R$ of the above neighborhood can be chosen one and the same for all points $x^0 \in \partial\Omega$. The change of variables $y'=x'$, $y_n=x_n-\omega (x')$ then maps $\partial\Omega \cap U_R$ into the ball of radius $R$ lying on the hyperplane $\{y_n=0\}$.

This change of the variables induces the ``transplantation'' operator $u(x) \rightarrow u(y)$ acting continuously from $W^2_q (\partial\Omega \cap U_R)$ to $W^2_q (B_R')$.
\medskip

\textit{Step 3.} Using the results of Steps 1 and 2 above, one can construct local extension operators that map $W^2_q (\partial\Omega)$-functions with sufficiently small $x$-support  into $W^2_p(\R^n)$-functions vanishing for $|x_n|>R/2$, $|x'|>3R/4$. Finally, the desired operator $\Pi$
can be glued  from the local operators via appropriate partition of unity.
\end{proof}

\section{The linear Venttsel problem}\label{sec3}

In the sequel we suppose that $n\ge3$.
Assume $\partial\Omega \in \C^{1,1}$ and
let the exponents $q$ and $p$ be chosen such that (see Fig.~\ref{fig1})
\begin{equation} \label{pq-cond}
1<p \le \frac{nq}{n-1} <p^*\quad \text{and} \quad 
q >1.
\end{equation}
\begin{figure}[h]
\centerline{\includegraphics[scale=.95]{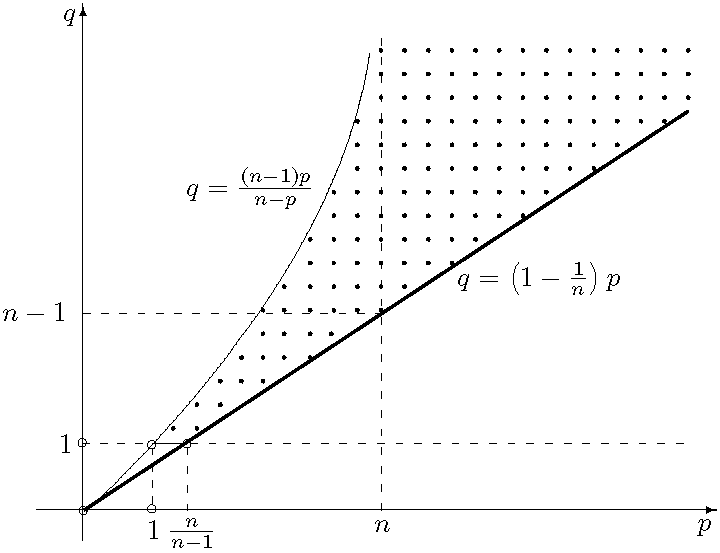}}
\vspace*{-.4cm}
\caption{The exponents $p$ and $q$}
\label{fig1}
\end{figure}

We introduce a linear elliptic operator $\CL$,
\begin{gather}
\label{4}
\CL \equiv -a^{ij}(x)D_iD_j+b^i(x)D_i+c(x),\\
\label{L1}
a^{ij}(x)=a^{ji}(x) \quad x\in \Omega, \qquad a^{ij} \in \textit{VMO}\,(\Omega), \tag{L1}\\
\label{L2}
\nu |\xi|^2 \le a^{ij}\xi_i\xi_j \le \nu^{-1} |\xi|^2 \quad \forall \xi\in \R^n, \quad \nu=\text{const}>0, \tag{L2}
\end{gather}
and a linear boundary operator $\CB$,
\begin{gather}
\label{5}
\CB \equiv -\alpha^{ij}(x)d_id_j+\beta^i(x)D_i+\gamma (x),\\
\label{B1}
\alpha^{ij}(x)=\alpha^{ji}(x) \quad x\in \partial\Omega, \qquad \alpha^{ij} \in \textit{VMO}\,(\partial\Omega), \tag{B1} \\
\label{B2}
\nu |\xi^*|^2 \le \alpha^{ij} \xi^*_i \xi^*_j \le \nu^{-1}|\xi^*|^2, \quad \forall\xi^*\in \R^n, \quad  \xi^* \perp \n (x). \tag{B2}
\end{gather}

Set $\mathbf{b}(x)=\big(b^1(x),\ldots,b^n(x)\big)$ and assume that the lower-order coefficients of the operator $\mathcal{L}$ satisfy the following integrability conditions
\begin{gather}
\label{L3}
\begin{aligned}
&|\mathbf{b}|\in L^{\max\{p,n\}}(\Omega), && \text{if} \quad p \neq n,\\
&|\mathbf{b}| \left( \log{(1+|\mathbf{b}|)}\right)^{1-1/n} \in L^n(\Omega), && \text{if} \quad p=n,
\end{aligned} 
\tag{L3}
\end{gather}
and
\begin{gather}
\label{L4}
\begin{aligned}
&c\in L^{\max\{p,n/2\}}(\Omega), &&  \text{if} \quad p \neq n/2,\\
&c\left(\log{( 1+|c|)}\right)^{1-1/n} \in L^{n/2}(\Omega), && \text{if} \quad p=n/2.
\end{aligned}
\tag{L4}
\end{gather}

The assumptions on the lower-order coefficients of the operator $\CB$ are as follows.   Most of the results obtained require the vector field $\boldsymbol{\beta} (x)=\big(\beta^1(x),\ldots,\beta^n(x)\big)$ to be an \textit{exterior field}  on $\partial\Omega$, that is,
\begin{equation} \label{B}
\beta_0(x):= \beta^i (x) \mathbf{n}_i(x) \ge 0, \qquad  x \in \partial\Omega.
\tag{B}
\end{equation}
We denote by $\boldsymbol{\beta}^*(x):= \boldsymbol{\beta} (x) - \beta_0(x)  \mathbf{n} (x)$ the tangential component of $\boldsymbol{\beta} (x)$ and assume 
\begin{gather}
\label{B3}
\begin{aligned}
&|\boldsymbol{\beta}^*|\in L^{\max\{q,n-1\}}(\partial\Omega), &&  \text{if} \quad q \neq n-1,\\
&|\boldsymbol{\beta}^*| \left( \log{(1+|\boldsymbol{\beta}^*|)}\right)^{1-1/(n-1)} \in L^{n-1}(\partial\Omega), && \text{if} \quad q=n-1,
\end{aligned}
\tag{B3}
\end{gather}
together with
\begin{gather}
\label{B4}
\begin{aligned}
&\gamma\in L^{\max\{q,(n-1)/2\}}(\partial\Omega),&& \text{if} \quad q \neq (n-1)/2,\\
&\gamma\left(\log{( 1+|\gamma|)}\right)^{1-1/(n-1)} \in L^{(n-1)/2}(\partial\Omega), && \text{if} \quad q=(n-1)/2.
\end{aligned}
\tag{B4}
\end{gather}

Finally, the normal component $\beta_0$ of the field  $\boldsymbol{\beta}$ is supposed to satisfy
\begin{gather}
\label{B5}
\begin{aligned}
&\beta_0\in L^{q}(\partial\Omega), && \text{if} \quad p >n,\\
&\beta_0\in L^{qp^*/(p^*-qn/(n-1))}(\partial\Omega), && \text{if} \quad p <n,\\
&\beta_0 \left( \log{(1+|\beta_0|)}\right)^{1-1/n} \in L^{q}(\partial\Omega), && \text{if} \quad p=n.
\end{aligned}
\tag{B5}
\end{gather}

Our first result provides an \textit{a~priori} estimate for any strong solution to the linear Venttsel problem in terms of the data of the problem.
\begin{thm} \label{apriori-estimate}
Let the exponents $p$ and $q$ be chosen in accordance with \eqref{pq-cond},   $\partial\Omega \in \C^{1,1}$
and assume that conditions \eqref{L1}--\eqref{L4} and 
\eqref{B1}--\eqref{B5} are verified.  

If a function $u\in V_{p,q}(\Omega)$  satisfies the equation
\begin{equation} \label{2.1-AN95}
\CL u(x)=f(x) \quad \text{a.e. in}\ \Omega
\end{equation}
and the boundary condition
\begin{equation} \label{2.2-AN95}
\CB u(x)=g (x) \quad \text{a.e. on}\ \partial\Omega
\end{equation}
with $f \in L^p(\Omega)$ and $g \in L^q(\partial\Omega),$
then
\begin{equation} \label{2.3-AN95}
\|u\|_{V_{p,q}(\Omega)} \le  C_1 \Big(\|f\|_{p,\Omega}+\|g\|_{q, \partial\Omega}+\|u\|_{p,\Omega}+
\|u\|_{q,\partial\Omega} \Big)
\end{equation}
with a constant $C_1$ depending  on $n$, $\nu$, $p$, $q$, $\mathrm{diam}\,\Omega$, the properties of $\partial\Omega$,
on the \textit{VMO}-moduli  of the coefficients $a^{ij}(x)$ and  $\alpha^{ij}(x)$, and on the moduli of continuity of the functions $|\mathbf{b}|$, $c$, $|\boldsymbol{\beta}^*|$, $\gamma$, and $\beta_0$ in the corresponding functional spaces defined by conditions \eqref{L3}--\eqref{L4} and 
\eqref{B3}--\eqref{B5}, respectively.\footnote{It is to be noted that the vector field $\boldsymbol{\beta}$ is {\bf not} required to be exterior to $\partial\Omega$ for the validity of Theorem~\ref{apriori-estimate}!}
\end{thm}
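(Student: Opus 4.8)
The plan is to split the coupled estimate \eqref{2.3-AN95} into an \emph{interior} bound for $\|u\|_{W^2_p(\Omega)}$ and a \emph{boundary} bound for $\|u\|_{W^2_q(\partial\Omega)}$, each controlling the other only through a cross term that is ultimately absorbed. Throughout I would freely use the Calder\'on--Zygmund theory of \cite{CFL91,CFL93} for second-order operators with \textit{VMO} principal part, once localized and flattened, together with the extension operator of Theorem~\ref{extension-theorem}, whose admissibility range $1\le p\le\tfrac{nq}{n-1}$ is exactly the left inequality in \eqref{pq-cond}. Note that condition \eqref{B} plays no role here (cf. the footnote): only the integrability hypotheses \eqref{L3}--\eqref{L4} and \eqref{B3}--\eqref{B5} enter, and always through the absolute continuity of the corresponding Lebesgue or Orlicz norms (the ``moduli of continuity in the functional spaces'').

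For the interior estimate I would let $\Pi$ be the operator of Theorem~\ref{extension-theorem}, set $w=\Pi(u|_{\partial\Omega})$ so that $\|w\|_{W^2_p(\Omega)}\le N_0\|u\|_{W^2_q(\partial\Omega)}$, and put $v=u-w\in W^2_p(\Omega)$, which has zero trace. Then $v$ solves the Dirichlet problem $-a^{ij}D_iD_jv=f-b^iD_iu-cu+a^{ij}D_iD_jw$ in $\Omega$, $v=0$ on $\partial\Omega$, and the global $W^2_p$-estimate of \cite{CFL91,CFL93} for the \textit{VMO} operator $-a^{ij}D_iD_j$ controls $\|v\|_{W^2_p(\Omega)}$ by the $L^p$-norm of the right-hand side plus $\|v\|_{p,\Omega}$. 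The term $a^{ij}D_iD_jw$ is dominated by $\|w\|_{W^2_p(\Omega)}\le N_0\|u\|_{W^2_q(\partial\Omega)}$, while the lower-order contributions $b^iD_iu$ and $cu$ are estimated by H\"older's inequality against \eqref{L3}--\eqref{L4} and the Sobolev embeddings for $W^2_p(\Omega)$; splitting $|\mathbf b|$ and $c$ into a small-norm part and a bounded part turns these into $\varepsilon\|u\|_{W^2_p(\Omega)}+C_\varepsilon\|u\|_{p,\Omega}$, which is absorbed. This yields
\begin{equation*}
\|u\|_{W^2_p(\Omega)}\le C\big(\|f\|_{p,\Omega}+\|u\|_{p,\Omega}+\|u\|_{W^2_q(\partial\Omega)}\big).\tag{$*$}
\end{equation*}

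For the boundary estimate I would cover $\partial\Omega$ by the $\C^{1,1}$-charts of Theorem~\ref{extension-theorem}, flatten each piece, and observe that there $-\alpha^{ij}d_id_j$ becomes a uniformly elliptic operator on $\R^{n-1}$ whose principal coefficients stay in \textit{VMO} (products of the \textit{VMO} $\alpha^{ij}$ with the Lipschitz components of $\n$) and whose geometric lower-order coefficients are bounded. Applying \cite{CFL91,CFL93} on the $(n-1)$-dimensional manifold and patching by a partition of unity gives $\|u\|_{W^2_q(\partial\Omega)}\le C\big(\|\alpha^{ij}d_id_ju\|_{q,\partial\Omega}+\|u\|_{W^1_q(\partial\Omega)}\big)$, where the $W^1_q$-remainder interpolates into $\varepsilon\|u\|_{W^2_q(\partial\Omega)}+C_\varepsilon\|u\|_{q,\partial\Omega}$. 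I then substitute $\alpha^{ij}d_id_ju=\beta^iD_iu+\gamma u-g$ from \eqref{2.2-AN95} and decompose $\beta^iD_iu=\boldsymbol\beta^*\!\cdot du+\beta_0\,\n\!\cdot Du$. The tangential piece $\boldsymbol\beta^*\!\cdot du$ and the zero-order piece $\gamma u$ are intrinsic to $\partial\Omega$ and are handled as in the interior step, now with \eqref{B3}--\eqref{B4} and the Sobolev embeddings on the $(n-1)$-manifold, again producing absorbable $\varepsilon\|u\|_{W^2_q(\partial\Omega)}+C_\varepsilon\|u\|_{q,\partial\Omega}$. The crux is the normal piece $\beta_0\,\n\!\cdot Du$, which couples back to the interior: by the trace theorem the first derivatives of $u\in W^2_p(\Omega)$ restrict to $W^{1-1/p}_p(\partial\Omega)$, hence to $L^{s}(\partial\Omega)$ with $s=\tfrac{p(n-1)}{n-p}$ when $p<n$ (to $\C(\partial\Omega)$ when $p>n$, and to an exponential Orlicz class when $p=n$). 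A direct computation shows that the exponent prescribed for $\beta_0$ in \eqref{B5} is precisely the H\"older conjugate making $\|\beta_0\,\n\!\cdot Du\|_{q,\partial\Omega}$ finite, and the right inequality $\tfrac{nq}{n-1}<p^*$ in \eqref{pq-cond} is exactly what renders that exponent positive. Interpolating the trace converts this into $\varepsilon\|u\|_{W^2_p(\Omega)}+C_\varepsilon\big(\|u\|_{p,\Omega}+\|u\|_{q,\partial\Omega}\big)$, so that
\begin{equation*}
\|u\|_{W^2_q(\partial\Omega)}\le C\big(\|g\|_{q,\partial\Omega}+\|u\|_{p,\Omega}+\|u\|_{q,\partial\Omega}\big)+\varepsilon\,\|u\|_{W^2_p(\Omega)}.\tag{$**$}
\end{equation*}

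Finally I would insert $(*)$ into $(**)$ and choose all interpolation parameters $\varepsilon$ small enough that the resulting coefficient of $\|u\|_{W^2_q(\partial\Omega)}$ on the right is $<\tfrac12$; absorbing it gives the boundary half of \eqref{2.3-AN95}, and feeding the result back into $(*)$ gives the interior half. The main obstacle I anticipate is the normal coupling term $\beta_0\,\n\!\cdot Du$: forcing it into the scale-invariant form dictated by \eqref{B5} requires the sharp trace/Sobolev bookkeeping above, and the borderline regimes ($p=n$, $q=n-1$, $q=\tfrac{n-1}{2}$, and the critical exponents in \eqref{B5}) compel one to replace plain H\"older estimates by their Orlicz counterparts, pairing exponential (Trudinger--Moser type) integrability of the traces with the logarithmic weights in the hypotheses. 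A secondary but essential delicacy is the circular dependence between $(*)$ and $(**)$: the absorption closes only because the interior-to-boundary coupling in $(**)$ carries an adjustable small factor $\varepsilon$, whereas the boundary-to-interior coupling in $(*)$ (through $N_0$) does not, so the requisite smallness must be extracted entirely on the boundary side.
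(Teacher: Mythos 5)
Your proposal is correct and follows essentially the same route as the paper's proof: the same three ingredients (the Chiarenza--Frasca--Longo estimates applied to the flattened boundary equation, the extension operator of Theorem~\ref{extension-theorem} reducing the interior bound to a zero-trace Dirichlet problem, and the trace embeddings $W^2_p(\Omega)\hookrightarrow\C^1(\overline\Omega)$, $W^1_p(\Omega)\to L^{p^*(n-1)/n}(\partial\Omega)$, resp. the exponential Orlicz class, paired against the splitting of $\beta_0$ dictated by \eqref{B5}), with the same structural point that the adjustable smallness must come from the $\beta_0$-coupling on the boundary side since the extension constant $N_0$ is fixed. The only difference is cosmetic: the paper's ``Munchhausen trick'' estimates $\|\beta_0\partial_{\mathbf{n}}u\|_{q,\partial\Omega}$ in terms of itself and then back-substitutes, whereas you absorb at the level of $\|u\|_{W^2_q(\partial\Omega)}$ by inserting $(*)$ into $(**)$ --- an algebraically equivalent reorganization of the identical circular absorption.
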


\begin{proof}
If $\beta_0 \equiv 0$ then $\beta^i(x)D_iu={\beta^*}^i(x)d_iu$ on $\partial\Omega$
and \eqref{2.2-AN95} can be considered as autonomous equation on $\partial\Omega:$ 
$$
-\alpha^{ij}(x)d_id_ju+{\beta^*}^i(x)d_iu+\gamma(x)u=g(x)\quad \text{a.e. on}\ \partial\Omega.
$$
Using the standard procedure of finite covering of $\partial\Omega$ by balls, local flattening of $\partial\Omega$ and employing there the 
 coercive estimates from \cite{CFL91,CFL93}, 
and putting finally these together with the aid of partition of unity, we get
$$
\|u\|_{W^2_q(\partial\Omega)} \le C\Big(\|u\|_{q,\partial\Omega}+\|g\|_{q,\partial\Omega}\Big).
$$
The function $u\in W^2_p(\Omega)$ solves the equation
$$
-a^{ij}(x)D_iD_ju+b^i(x)D_iu+c(x)u=f(x)\quad \text{a.e. in}\ \Omega
$$
and, according to Theorem~\ref{extension-theorem}, it assumes the boundary value $u|_{\partial\Omega}$ in the sense of $W^1_p.$ Then we apply once again the global $L^p$-theory of \cite{CFL93} in order to conclude
$$
\|u\|_{W^2_p(\Omega)} \le C\Big(\|u\|_{p,\Omega}+\|f\|_{p,\Omega}+\|u\|_{W^2_q(\partial\Omega)}\Big)
$$
that gives the claim  \eqref{2.3-AN95}. Actually, it is to be noted that the estimates in \cite{CFL93} regard second-order operators without lower-order terms, but appropriate use of interpolation inequalities leads to the same result for general second-order operators.

In the general case $\beta_0 \not\equiv 0$, we apply the so-called \textit{Munchhausen trick} 
(\cite{R86}, \cite[Theorem 2.1]{CLNV19}, see also \cite[Sect.~2]{AN95}) and estimate the directional derivative 
$\partial_{\mathbf{n}} u$ in terms of itself.  The procedure consists of three steps.

\medskip

\textit{Step 1.} 
Making use of 
$$
\beta^i(x)D_iu
={\beta^*}^i(x)d_iu+\beta_0(x)\partial_{\mathbf{n}}u,
$$ 
we rewrite the boundary condition \eqref{2.2-AN95} in the form
\begin{align} 
\label{2.14-AN95}
-\alpha^{ij}(x)d_id_ju &\ = g (x)-\beta_0 (x) 
\partial_{\mathbf{n}} u-{\beta^*}^i(x)d_iu-\gamma (x) u \\
\nonumber
&\ =: g_1 (x) \quad \text{a.e. on}\ \partial\Omega.
\end{align}

Now we consider \eqref{2.14-AN95} as an elliptic equation on $\partial\Omega$. As above, the standard procedure of finite covering of $\partial\Omega$ by balls, local flattening of $\partial\Omega$ and employing there the coercive estimates from  \cite{CFL91,CFL93},  putting these together via a partition of unity, implies that any solution $u$ of \eqref{2.14-AN95} satisfies the bound
$$
\|u\|_{W^2_q (\partial\Omega)} \le N_1 \Big(\|g_1\|_{q, \partial\Omega}+\|u\|_{q,\partial\Omega}\Big).
$$ 
Here $N_1$ is determined by $n$, $q$, $\nu$, the properties of $\partial\Omega$ and by the \textit{VMO}-moduli of the coefficients $\alpha^{ij}$.

Employing \eqref{2.14-AN95} in the last inequality, it takes on the form 
\begin{align} 
\label{2.15-AN95}
\|u\|_{W^2_q (\partial\Omega)} \le N_1 \Big(\|g\|_{q, \partial\Omega} +\|u\|_{q, \partial\Omega}&+ \left\|\beta_0 
\partial_{\mathbf{n}} u\right\|_{q,\partial\Omega}\\
\nonumber
&+\|{\beta^*}^i d_iu\|_{q, \partial\Omega}+\|\gamma u\|_{q,\partial\Omega}\Big).
\end{align} 
We proceed now to estimate  the last two terms in the right-hand of \eqref{2.15-AN95}. Take an arbitrary $\varepsilon >0$ and consider $\|{\beta^*}^i d_iu\|_{q, \partial\Omega}$.  The argument falls naturally into three possible cases. 

\medskip

\textit{Case 1a.} Let $q>n-1$. Since $W^2_q(\partial\Omega)$ is compactly embedded into $\C^1 (\partial\Omega)$, we have the estimate
\begin{align} 
\label{2.20-AN95}
\|{\beta^*}^id_iu\|_{q, \partial\Omega} \le&\ \|\boldsymbol{\beta}^*\|_{q, \partial\Omega} \|du\|_{\infty, \partial\Omega}\\
\nonumber
 \le&\ \varepsilon \|\boldsymbol{\beta}^*\|_{q, \partial\Omega}  \|u\|_{W^2_q(\partial\Omega)} + N_2(\varepsilon) \|\boldsymbol{\beta}^*\|_{q, \partial\Omega}  \|u\|_{q, \partial\Omega},
\end{align}
where $N_2(\varepsilon)$ depends also on $n$, $q$, $\mathrm{diam}\,\Omega$ and the properties of $\partial\Omega$. 

\medskip

\textit{Case 1b.} Let $q<n-1$. Now $|\boldsymbol{\beta}^*|\in L^{n-1}(\partial\Omega)$ and
we use the well-known idea (see, for example, \cite[Ch. III, \S 8, Remark 8.2]{LU68}) to decompose $|\boldsymbol{\beta}^*|$ into the sum
$$
|\boldsymbol{\beta}^*(x)|=\varphi_1(x)+\varphi_2(x),
$$
where  $\|\varphi_1\|_{n-1, \partial\Omega} \le \delta$ with a  small positive $\delta$ to be chosen later, and $\varphi_2 \in L^{\infty} (\partial\Omega)$. Notice that $\|\varphi_2\|_{\infty, \partial\Omega}$ is also determined by $\delta$ and by the moduli of continuity of $|\boldsymbol{\beta}^*|$ in $L^{n-1} (\partial\Omega)$. 
An application of the H{\"o}lder inequality yields 
\begin{equation} \label{2.20-2}
\|{\beta^*}^id_iu\|_{q, \partial\Omega} \le \|\varphi_1\|_{n-1, \partial\Omega} \|du\|_{q^*, \partial\Omega}+
\|\varphi_2\|_{\infty, \partial\Omega}\|du\|_{q,\partial\Omega},
\end{equation}
where $q^*=q(n-1)/(n-1-q)$. The first  term in \eqref{2.20-2} is estimated from above with the help of the Sobolev embedding on $\partial\Omega$, while the upper bound for the second term   follows from the compact embedding of $W^2_q(\partial\Omega)$ into $W^1_q(\partial\Omega)$. Thus, choosing $\delta$ small enough, we obtain
\begin{equation} \label{2.20-3}
\|{\beta^*}^id_i u\|_{q, \partial\Omega} \le \varepsilon \left( 1+\|\varphi_2\|_{\infty, \partial\Omega}\right) \|u\|_{W^2_q (\partial\Omega)}+N_3(\varepsilon)\|\varphi_2\|_{\infty, \partial\Omega}\|u\|_{q,\partial\Omega},
\end{equation}
where $N_3(\varepsilon)$ depends on the same parameters as $N_2(\varepsilon)$. 

\medskip

\textit{Case 1c.} If $q=n-1$ we argue in the same manner as in Case 1b. What is the difference now is that we use the Yudovich--Pohozhaev  embedding theorem  into the Orlicz space
$$
W^1_{n-1}(\partial\Omega) \hookrightarrow L_{\psi}(\partial\Omega)\quad \text{with}\quad \psi (t)=e^{|t|^{(n-1)/(n-2)}}-1
$$
(see, e.g., \cite[Sec.~10.5-10.6]{BIN75}).
 Therefore,
\begin{equation}\label{Orlicz}
|du|^{n-1}\in L_{\Psi}(\partial\Omega) \quad \text{with}\quad \Psi (t)=e^{|t|^{1/(n-2)}}-1,    
\end{equation}
and we observe that in the considered case the assumption \eqref{B3} ensures that $|\boldsymbol{\beta}^*|^{n-1}$ belongs to the Orlicz space $L_{\Psi^*} (\partial\Omega)$ dual to $L_{\Psi} (\partial\Omega)$, see \cite[Sec.~14]{KR58}. As a result we get again the estimate \eqref{2.20-3}, but now  
$\|\varphi_2\|_{\infty, \partial\Omega}$ is determined by the moduli of continuity of $|\boldsymbol{\beta}^*|$ in the Orlicz space related to \eqref{B3}.\footnote{Actually, the estimate \eqref{2.20-3} shows that $u\mapsto{\beta^*}^id_iu$ is a compact operator acting from $W^2_q (\partial\Omega)$ into $L^q(\partial\Omega),$ and further estimates of the lower-order terms can be interpreted in a similar way.}

\medskip

Summarizing, for sufficiently small $\varepsilon,$ 
we have 
\begin{equation} \label{2.20-final}
\|{\beta^*}^id_iu\|_{q, \partial\Omega} \le \frac{1}{4N_1} \|u\|_{W^2_q(\partial\Omega)} +\widehat{N}_1  \|u\|_{q, \partial\Omega}
\end{equation}
in the all three cases, 
where $N_1$ is the constant from \eqref{2.15-AN95}, while $\widehat{N}_1$ is  determined by $n$, $q$, $\mathrm{diam}\, \Omega$, the properties of $\partial\Omega$ and on the moduli of continuity of $|\boldsymbol{\beta}^*|$ in corresponding functional spaces defined by conditions \eqref{B3}.

Arguing in the same  way as in deriving of \eqref{2.20-final}, we conclude that
\begin{equation} \label{2.20-gamma}
\|\gamma u\|_{q, \partial\Omega} \le \frac{1}{4 N_1} \|u\|_{W^2_q(\partial\Omega)}+\widehat{N}_2  \|u\|_{q, \partial\Omega},
\end{equation}
where $\widehat{N}_2$ depends  on $n$, $q$, $\mathrm{diam}\, \Omega$, the properties of $\partial\Omega$ and on the moduli of continuity of $\gamma$ in corresponding functional spaces defined by conditions \eqref{B4}.

Substituting \eqref{2.20-final} and \eqref{2.20-gamma}  into the right-hand side of \eqref{2.15-AN95}, we obtain
\begin{equation} \label{2.15-AN95-short}
\|u\|_{W^2_q(\partial\Omega)} \le N_4 \left( \|g\|_{q,\partial\Omega}+\|u\|_{q,\partial\Omega}+
\left\|\beta_0 
\partial_{\mathbf{n}} u\right\|_{q,\partial\Omega} \right),
\end{equation}
where $N_4=2N_1 \left(1+\widehat{N}_1+\widehat{N}_2\right)$. 

\medskip

\textit{Step 2.}
Consider in $\Omega$ the function
\begin{equation} \label{2.16-AN95}
v(x)=u(x)-\widetilde{u}(x)
\end{equation}
with $\widetilde{u}=\Pi (u\big|_{\partial\Omega}),$ where $\Pi$ is the extension operator constructed in Theorem~\ref{extension-theorem}. It is evident that $v$ solves the boundary value problem
\begin{equation} \label{2.17-AN95}
\begin{cases}
-a^{ij}(x)D_iD_jv=f_1(x) &\quad \text{a.e. in}\ \Omega,\\
v=0 &\quad \hfill \text{a.e. on}\ \partial\Omega,
\end{cases}
\end{equation}
where $f_1(x):= f(x)-b^i(x)D_iu-c(x)u+a^{ij}(x)D_iD_j\widetilde{u}$.

It follows from Theorem~4.2 in \cite{CFL93}  that the solution of \eqref{2.17-AN95} satisfies the bound
$$
\|v\|_{W^2_p(\Omega)} \le N_5 \Big(\|f_1\|_{p,\Omega}+\|v\|_{p,\Omega}\Big),
$$
where $N_5$ depends only on $n$, $p$, $\nu$, on the properties of $\partial\Omega$ and on the \textit{VMO}-moduli of the coefficients $a^{ij}$. In view of \eqref{2.16-AN95}, \eqref{6.1-AN95} and the definition of $f_1$, one can transform the last inequality into
\begin{align} 
\label{2.18-AN95}
\|u\|_{W^2_p(\Omega)} \le N_6 \Big(\|f\|_{p,\Omega} +\|u\|_{p,\Omega}&+\|u\|_{W^2_q(\partial\Omega)} \\
\nonumber
&+\|b^iD_iu\|_{p,\Omega}+\|cu\|_{p,\Omega}\Big)
\end{align}
and $N_6$ depends on the same quantities as $N_5$. Repeating the arguments used in deriving \eqref{2.15-AN95-short}, we estimate the last two terms on the right-hand side of \eqref{2.18-AN95} and arrive finally at
\begin{equation} \label{2.18-AN95-short}
\|u\|_{W^2_p(\Omega)} \le N_7 \Big(\|f\|_{p,\Omega} +\|u\|_{p,\Omega}+\|u\|_{W^2_q(\partial\Omega)}\Big).
\end{equation}
Here $N_7$ is determined by the same parameters as $N_5$ and, in addition, it depends also on the moduli of continuity of $|\mathbf{b}|$ and $c$ in the corresponding functional spaces given by \eqref{L3} and \eqref{L4}, respectively

Combining  \eqref{2.15-AN95-short} with \eqref{2.18-AN95-short}, we get
\begin{align} \label{2.19-AN95}
\|u\|_{W^2_p(\Omega)} \le N_8 \Big(\|f\|_{p,\Omega} +\|g\|_{q,\partial\Omega}&+\|u\|_{p,\Omega}\\
\nonumber
&+\|u\|_{q,\partial\Omega}+
\left\|\beta_0 \partial_{\mathbf{n}} u\right\|_{q,\partial\Omega}\Big),
\end{align}
where $N_8=N_7 \left(1+N_4\right)$. 

\medskip

\textit{Step 3.}  We are in a position now to estimate the term $\left\|\beta_0 \partial_{\mathbf{n}} u\right\|_{q,\partial\Omega}$. We argue along the same lines as above when derived \eqref{2.15-AN95-short} and \eqref{2.18-AN95-short}. For $p>n$ we use the embedding $W^2_p(\Omega) \hookrightarrow \C^1 (\overline{\Omega})$, for $p<n$ and $p=n$ we use the trace embeddings of $W^1_p(\Omega)$ into $L^{p^*(n-1)/n}(\partial\Omega)$ and into the Orlicz space, respectively (see \cite[Sec.~10.5-10.6]{BIN75}). Thus, in the all three cases we have
\begin{equation} \label{beta_0}
\left\|\beta_0 \partial_{\mathbf{n}} u\right\|_{q,\partial\Omega} \le \frac{1}{2N_8} \|u\|_{W^2_p(\Omega)} +\widehat{N}_3  \|u\|_{p, \Omega},
\end{equation}
where $N_8$ is the constant from \eqref{2.19-AN95}, and $\widehat{N}_3$ is  determined by $n$, $p$, $\mathrm{diam}\, \Omega$, the properties of $\partial\Omega$ and on the moduli of continuity of $|\beta_0|$ in corresponding functional spaces defined by conditions \eqref{B5}. \medskip

Substituting  \eqref{2.19-AN95} into \eqref{beta_0}, we finalize the Munchhausen trick and arrive at
$$
\left\|\beta_0 \partial_{\mathbf{n}} u\right\|_{q,\partial\Omega} \le (1+2\widehat{N}_3) \Big(\|f\|_{p,\Omega} +\|g\|_{q,\partial\Omega}+\|u\|_{p,\Omega}+\|u\|_{q,\partial\Omega}\Big).
$$

Finally, inserting the last inequality into the right-hand sides of \eqref{2.15-AN95-short} and \eqref{2.19-AN95}  gives the claim \eqref{2.3-AN95} and this completes the proof.
\end{proof}

For the sake of further application of Theorem~\ref{apriori-estimate} to the study of quasilinear Venttsel problems, we need its variant concerning sequences of differential operators.
\begin{remark}\label{rem1}
\em
Let the sequence of operators
$$
\CL_k \equiv -a^{ij}_k(x)D_iD_j+b^i_k(x)D_i+c_k(x),\qquad
\CB_k \equiv -\alpha^{ij}_k(x)d_id_j+\beta^i_k(x)D_i+\gamma_k (x),
$$
satisfy the assumptions \eqref{L1}--\eqref{L2}, \eqref{B1}--\eqref{B2}. Suppose that the principal coefficients $a^{ij}_k$ and $\alpha^{ij}_k$ are \textit{VMO} functions uniformly in $k$, that is,
$$
\lim_{r\to 0} \sup_k\: \omega_{a^{ij}_k}(r)=\lim_{r\to 0} \sup_k\: \omega_{\alpha^{ij}_k}(r)=0.
$$
Define further $\beta_{0,k}(x):=\beta^i_k(x)\mathbf{n}_i(x)$ and assume that
$$
|\mathbf{b}_k|\le|\mathbf{b}|,\qquad
|c_k|\le|c|,\qquad
|\boldsymbol{\beta}^*_k|\le |\boldsymbol{\beta}^*|,\qquad
|\gamma_k|\le|\gamma|,\qquad
|\beta_{0,k}|\le|\beta_0|,
$$
where the functions $|\mathbf{b}|,c,|\boldsymbol{\beta}^*|,\gamma$ and $\beta_0$ satisfy the assumptions \eqref{L3}, \eqref{L4}, \eqref{B3}, \eqref{B4} and \eqref{B5}, respectively. Then the solutions of the boundary value problems 
$$
\CL_k u_k(x)=f_k(x) \quad \text{a.e. in}\ \Omega, \qquad
\CB_k u_k(x)=g_k (x) \quad \text{a.e. on}\ \partial\Omega
$$
satisfy the estimate
$$
\|u_k\|_{V_{p,q}(\Omega)} \le C_1 \Big(\|f_k\|_{p,\Omega}+\|g_k\|_{q, \partial\Omega}+\|u_k\|_{p,\Omega}+
\|u_k\|_{q,\partial\Omega} \Big)
$$
with a constant $C_1$ which is \textit{independent} of $k$.
\end{remark}

In the case when uniqueness theorem holds for the linear Venttsel problem \eqref{2.1-AN95}--\eqref{2.2-AN95},
the lower-order terms $\|u\|_{p,\Omega}$ and $\|u\|_{q,\partial\Omega}$ can be dropped from the right-hand side of \eqref{2.3-AN95}.
\begin{cor}
\label{corollary}
Let the domain $\Omega$ and the operators $\CL$ and $\CB$ in \eqref{2.1-AN95}--\eqref{2.2-AN95} satisfy all the assumptions of Theorem~$\ref{apriori-estimate}$. Assume also that the homogeneous problem 
\begin{equation} \label{homogen}
\CL u=0 \quad \text{a.e. in}\ \Omega, \qquad \CB u=0 \quad \text{a.e. on}\ \partial\Omega
\end{equation}
admits {\em only} the trivial solution in $V_{p,q}(\Omega)$. 

Then a solution $u\in V_{p,q}(\Omega)$ of \eqref{2.1-AN95}--\eqref{2.2-AN95} satisfies the estimate
\begin{equation} 
\label{2.3-AN95'}
\|u\|_{V_{p,q}(\Omega)} \le \widetilde C_1 \Big(\|f\|_{p,\Omega}+\|g\|_{q, \partial\Omega} \Big)
\end{equation}
with a constant $\widetilde C_1$ independent of $u$\footnote{In Lemma~\ref{dropped-est-h} below we give a more general result which regards sequences of differential operators.}.
\end{cor}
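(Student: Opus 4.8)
The plan is to remove the two lower-order terms from \eqref{2.3-AN95} by a standard compactness--contradiction argument, in which Theorem~\ref{apriori-estimate} plays the decisive role of guaranteeing that any ``limiting'' function is nontrivial. Suppose, to the contrary, that no constant $\widetilde C_1$ as in \eqref{2.3-AN95'} exists. Then for every $k\in\mathbb{N}$ there are data $f_k\in L^p(\Omega)$, $g_k\in L^q(\partial\Omega)$ and a solution $u_k\in V_{p,q}(\Omega)$ of \eqref{2.1-AN95}--\eqref{2.2-AN95} such that $\|u_k\|_{V_{p,q}(\Omega)}>k\big(\|f_k\|_{p,\Omega}+\|g_k\|_{q,\partial\Omega}\big)$. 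Normalizing by $\|u_k\|_{V_{p,q}(\Omega)}=1$, one gets $\|f_k\|_{p,\Omega}+\|g_k\|_{q,\partial\Omega}<1/k\to0$, and feeding this into the \textit{a~priori} estimate \eqref{2.3-AN95} yields
\[
1=\|u_k\|_{V_{p,q}(\Omega)}\le C_1\big(\|f_k\|_{p,\Omega}+\|g_k\|_{q,\partial\Omega}+\|u_k\|_{p,\Omega}+\|u_k\|_{q,\partial\Omega}\big),
\]
so that $\liminf_{k\to\infty}\big(\|u_k\|_{p,\Omega}+\|u_k\|_{q,\partial\Omega}\big)\ge 1/C_1>0$.

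Next I would extract a limit. Since $\{u_k\}$ is bounded in $V_{p,q}(\Omega)=W^2_p(\Omega)\cap W^2_q(\partial\Omega)$, and since the embeddings $W^2_p(\Omega)\hookrightarrow L^p(\Omega)$ and $W^2_q(\partial\Omega)\hookrightarrow L^q(\partial\Omega)$ are compact (Rellich--Kondrachov, available because $\Omega$ is bounded with $\partial\Omega\in\C^{1,1}$), a subsequence --- still denoted $u_k$ --- converges weakly in $V_{p,q}(\Omega)$ to some $u_0$ and strongly in $L^p(\Omega)$ and in $L^q(\partial\Omega)$. The strong convergence together with the lower bound above gives $\|u_0\|_{p,\Omega}+\|u_0\|_{q,\partial\Omega}\ge 1/C_1>0$, so that $u_0\not\equiv0$.

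It remains to check that $u_0$ solves the homogeneous problem \eqref{homogen}. Because $a^{ij},\alpha^{ij}\in L^\infty$ and, as observed in the proof of Theorem~\ref{apriori-estimate}, the lower-order parts $u\mapsto b^iD_iu+cu$ and $u\mapsto\beta^iD_iu+\gamma u$ act \emph{compactly} from $V_{p,q}(\Omega)$ into $L^p(\Omega)$, respectively $L^q(\partial\Omega)$ (this is precisely the content of the footnote after \eqref{2.20-3}), the operators $\CL\colon V_{p,q}(\Omega)\to L^p(\Omega)$ and $\CB\colon V_{p,q}(\Omega)\to L^q(\partial\Omega)$ are bounded and linear, hence weak-to-weak continuous. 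Consequently $\CL u_k\rightharpoonup\CL u_0$ and $\CB u_k\rightharpoonup\CB u_0$; but $\CL u_k=f_k\to0$ and $\CB u_k=g_k\to0$ strongly, so uniqueness of weak limits forces $\CL u_0=0$ in $\Omega$ and $\CB u_0=0$ on $\partial\Omega$. Thus $u_0$ is a nontrivial $V_{p,q}(\Omega)$-solution of \eqref{homogen}, contradicting the hypothesis, and \eqref{2.3-AN95'} follows.

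The step I expect to be the main obstacle is the identification of the limiting equation, namely verifying that $\CL$ and $\CB$ behave continuously under the weak $V_{p,q}$-convergence despite the coefficients being merely of Lebesgue/Orlicz integrability; here the compactness of the lower-order operators established en route to Theorem~\ref{apriori-estimate} is exactly what is needed, while the principal parts are handled by their $L^\infty$-boundedness. One should also confirm that the two Rellich-type embeddings --- interior and boundary --- hold simultaneously for the combined $V_{p,q}$ topology, which is immediate since the two norms control the interior function and its trace independently.
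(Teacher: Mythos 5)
Your argument is correct, and its skeleton coincides with the paper's: normalize $\|u_k\|_{V_{p,q}(\Omega)}=1$, pass to a subsequence converging strongly in $L^p(\Omega)$ and $L^q(\partial\Omega)$ by compactness, show the limit solves the homogeneous problem \eqref{homogen}, and contradict the uniqueness hypothesis. Where you genuinely diverge is in the final two steps. The paper exploits linearity once more: since $u_k-u_m$ solves \eqref{2.1-AN95}--\eqref{2.2-AN95} with data $f_k-f_m$, $g_k-g_m$, the coercive estimate \eqref{2.3-AN95} applied to these \emph{differences} shows $\{u_k\}$ is Cauchy, hence converges \emph{strongly} in $V_{p,q}(\Omega)$; nontriviality of the limit is then free ($\|u\|_{V_{p,q}(\Omega)}=1$), and the limit equations follow by continuity of $\CL$ and $\CB$. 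You instead remain at the level of weak $V_{p,q}$-convergence, which forces two substitutes: (i) nontriviality comes from feeding the normalization back into \eqref{2.3-AN95} to obtain $\liminf_k\big(\|u_k\|_{p,\Omega}+\|u_k\|_{q,\partial\Omega}\big)\ge 1/C_1$, a nice quantitative touch the paper does not need; and (ii) the limit equations are identified via weak-to-weak continuity of $\CL\colon V_{p,q}(\Omega)\to L^p(\Omega)$ and $\CB\colon V_{p,q}(\Omega)\to L^q(\partial\Omega)$, for which \emph{boundedness} of these operators suffices --- your invocation of compactness of the lower-order parts is superfluous at that point, since the estimates \eqref{2.20-final}, \eqref{2.20-gamma}, \eqref{beta_0} and their interior analogues already give boundedness, including the $\beta_0\partial_{\mathbf{n}}u$ term via \eqref{B5}. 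Both routes cost roughly the same here; the paper's Cauchy-difference trick buys the stronger conclusion of norm convergence in $V_{p,q}(\Omega)$, while your weak-limit variant is the pattern that survives when the estimate cannot be applied to differences of solutions. Two details worth a sentence in a final write-up: extraction of the weakly convergent subsequence uses reflexivity of $V_{p,q}(\Omega)$, which holds since $1<p,q<\infty$ by \eqref{pq-cond} and $u\mapsto\big(u,u|_{\partial\Omega}\big)$ identifies $V_{p,q}(\Omega)$ with a closed subspace of $W^2_p(\Omega)\times W^2_q(\partial\Omega)$; and the strong $L^q(\partial\Omega)$-limit must be checked to be the trace of $u_0$, which follows because $u_k\rightharpoonup u_0$ in $W^2_p(\Omega)$ gives $u_k|_{\partial\Omega}\to u_0|_{\partial\Omega}$ in $L^p(\partial\Omega)$ by compactness of the trace.
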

\begin{proof}
The proof is quite standard (see \cite[Lemma~9.17]{GT01} and the proof of Lemma~5.1 \cite{LM68} for a more general result). We argue by contradiction. If \eqref{2.3-AN95'} is false, then there is a sequence $u_k$ such that
$$
f_k:=\CL u_k\to0\ \ \textrm{in} \ \ L^p(\Omega), \quad g_k:=\CB u_k\to0\ \ \textrm{in} \ \ L^q(\partial\Omega),\quad \|u_k\|_{V_{p,q}(\Omega)}\equiv 1.
$$
Without loss of generality we may assume that $u_k\to u$ in $L^p(\Omega)$ and $u_k\to v$ in $L^q(\partial\Omega)$. The estimate \eqref{2.3-AN95} applied to pairwise differences $u_k-u_m$ yields
$$
\|u_k-u_m\|_{V_{p,q}(\Omega)}\to 0 \quad \textrm{as} \quad k,m\to\infty,
$$
and therefore $u_k\to u$ in $V_{p,q}(\Omega)$, $v=u|_{\partial\Omega}$. This, in turn, implies $\CL u=0$ and $\CB u=0,$ whence $u=0$ by the uniqueness, which is a contradiction.
\end{proof}

Standard arguments, based on the parameter continuation and the coercive estimate \eqref{2.3-AN95}, lead to the following existence theorem.
\begin{thm}\label{existence}
Under the hypotheses of Corollary $\ref{corollary},$ 
the non-homogeneous problem \eqref{2.1-AN95}--\eqref{2.2-AN95} admits a unique solution $u\in V_{p,q}(\Omega)$ for all $f \in L^p(\Omega)$ and $g \in L^q(\partial \Omega)$.
\end{thm}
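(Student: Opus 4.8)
The plan is to phrase solvability operator-theoretically and to run the \emph{method of continuity} in its semi-Fredholm form, drawing the required compactness from the coercive estimate \eqref{2.3-AN95}. Consider the bounded linear operator
\[
T:=(\CL,\CB)\colon\ V_{p,q}(\Omega)\longrightarrow L^p(\Omega)\times L^q(\partial\Omega),\qquad Tu:=\big(\CL u,\ \CB u\big),
\]
whose boundedness is guaranteed by the very embeddings exploited in the proof of Theorem~\ref{apriori-estimate} under \eqref{L1}--\eqref{L4}, \eqref{B1}--\eqref{B5}. Solvability of \eqref{2.1-AN95}--\eqref{2.2-AN95} for every $(f,g)$ is exactly the surjectivity of $T$, while the standing hypothesis of Corollary~\ref{corollary} — that \eqref{homogen} has only the trivial solution — says that $\ker T=\{0\}$. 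Thus it remains to prove that $T$ is onto.

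First I would verify that $T$ is upper semi-Fredholm. The map $u\mapsto(u,\,u|_{\partial\Omega})$ embeds $V_{p,q}(\Omega)$ compactly into $L^p(\Omega)\times L^q(\partial\Omega)$, since $W^2_p(\Omega)\hookrightarrow L^p(\Omega)$ is compact by Rellich--Kondrachov and $W^2_q(\partial\Omega)\hookrightarrow L^q(\partial\Omega)$ is compact on the closed manifold $\partial\Omega$. Reading \eqref{2.3-AN95} as $\|u\|_{V_{p,q}(\Omega)}\le C_1\big(\|Tu\|+\|(u,u|_{\partial\Omega})\|\big)$, the classical a~priori-estimate-with-compact-remainder argument yields that $T$ has closed range and finite-dimensional kernel. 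Next I would deform $T$: for $t\in[0,1]$ let $T_t:=(\CL_t,\CB_t)$ be the affine interpolation of coefficients joining the reference operator $T_0:=(-\Delta,\,-\Delta_{\partial\Omega}+\mathrm{id})$ at $t=0$ to $T_1:=T$ at $t=1$, the reference lower-order coefficients being smooth. Convex combinations of symmetric matrices with eigenvalues in $[\nu,\nu^{-1}]$ retain this ellipticity, their \textit{VMO}-moduli stay controlled, and the lower-order coefficients remain dominated by fixed functions in the spaces prescribed by \eqref{L3}--\eqref{B5}; moreover $|a^{ij}_t|,|\alpha^{ij}_t|$ are bounded thanks to \eqref{L2}, \eqref{B2}, so $t\mapsto T_t$ is norm-continuous. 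Hence every $T_t$ meets the hypotheses of Theorem~\ref{apriori-estimate} — the exterior-field condition \eqref{B} being \emph{not} required there — and Remark~\ref{rem1} makes the constant in \eqref{2.3-AN95} independent of $t$. Consequently each $T_t$ is upper semi-Fredholm, and by the stability of the semi-Fredholm index under norm-continuous perturbations the index is constant along $[0,1]$, so $\mathrm{ind}\,T_1=\mathrm{ind}\,T_0$.

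Finally I would compute the index at the base point by showing that $T_0$ is invertible. The system $T_0u=(f,g)$ decouples: the boundary equation $-\Delta_{\partial\Omega}w+w=g$ has a unique solution $w\in W^2_q(\partial\Omega)$ because $-\Delta_{\partial\Omega}+\mathrm{id}$ is invertible on the closed manifold $\partial\Omega$; then $u:=\Pi w+v$, with $\Pi$ the extension operator of Theorem~\ref{extension-theorem} and $v\in W^2_p(\Omega)$ solving $-\Delta v=f+\Delta(\Pi w)$ with zero trace, lies in $V_{p,q}(\Omega)$ and solves $T_0u=(f,g)$; reversing the steps gives uniqueness, so $T_0$ is a bounded bijection, hence $\mathrm{ind}\,T_0=0$. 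Therefore $\mathrm{ind}\,T=0$; being upper semi-Fredholm of finite index $0$, $T$ is Fredholm of index $0$, and $\ker T=\{0\}$ forces its range to be all of $L^p(\Omega)\times L^q(\partial\Omega)$. Thus $T$ is bijective, which is the assertion.

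The genuinely delicate points are two. The first is to secure the \emph{uniform} coercivity along the whole deformation: one must confirm that the interpolated data still satisfy \eqref{L1}--\eqref{B5} with $t$-independent moduli, which is the content of Remark~\ref{rem1}, and this is exactly why \eqref{2.3-AN95} — rather than its lower-order-free version — is the right tool, since along the family no uniqueness, hence no estimate \eqref{2.3-AN95'}, is at our disposal. The second, which I expect to be the main obstacle, is to realize the inverse of the reference operator \emph{inside the constrained space} $V_{p,q}(\Omega)$ and not merely in $W^2_p(\Omega)$: this is precisely where the trace compatibility \eqref{pq-cond} and the extension Theorem~\ref{extension-theorem} must enter, so as to guarantee that the Dirichlet solution with boundary datum $w\in W^2_q(\partial\Omega)$ indeed possesses a trace lying in $W^2_q(\partial\Omega)$. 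Everything else — boundedness of $T$, Rellich compactness, and the constancy of the semi-Fredholm index — is standard.
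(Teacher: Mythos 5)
Your proof is correct and is essentially the argument the paper compresses into a single line (``standard arguments, based on the parameter continuation and the coercive estimate \eqref{2.3-AN95}''): continuation from an invertible decoupled model problem, with \eqref{2.3-AN95} plus the compactness of $V_{p,q}(\Omega)\hookrightarrow L^p(\Omega)\times L^q(\partial\Omega)$ giving the semi-Fredholm property and index stability along the affine path (uniformly in $t$ via Remark~\ref{rem1}), so that triviality of the kernel forces surjectivity. Your observation that one must use the index-continuation form rather than the naive method of continuity --- since uniqueness, and hence the lower-order-free estimate \eqref{2.3-AN95'}, is unavailable along the deformation --- is exactly the right reading of the paper's hint.
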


Using this result we can prove that the couple $(\mathcal{L},\mathcal{B})$ supports 
the \textit{classical elliptic regularization property:}  if the data of
\eqref{2.1-AN95}--\eqref{2.2-AN95} allow the solution  to have better integrability, then the solution does gain it indeed.

\begin{thm}\label{regular}
Let the domain $\Omega$ and the operators $\CL$ and $\CB$ satisfy all the  assumptions of Theorem~$\ref{apriori-estimate}$. Suppose that the exponents $\tilde p<p$, $\tilde q<q$ verify  \eqref{pq-cond}. 

If $u\in V_{\tilde p, \tilde q}(\Omega)$ is a solution of \eqref{2.1-AN95}--\eqref{2.2-AN95} with $f \in L^p(\Omega)$ and $g \in L^q(\partial\Omega)$ then $u\in V_{p,q}(\Omega)$.
\end{thm}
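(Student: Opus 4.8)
The plan is to read the regularity off the \emph{Fredholm property} of the Venttsel operator rather than off a naive bootstrap, since the latter stalls. For each pair $(r,s)$ obeying \eqref{pq-cond} introduce the bounded operator
$$
\mathbb{T}_{r,s}\colon V_{r,s}(\Omega)\to L^r(\Omega)\times L^s(\partial\Omega),\qquad \mathbb{T}_{r,s}u=(\CL u,\CB u).
$$
By the coercive estimate \eqref{2.3-AN95} of Theorem~\ref{apriori-estimate} together with the compactness of the embedding $V_{r,s}(\Omega)\hookrightarrow L^r(\Omega)\times L^s(\partial\Omega)$, $u\mapsto(u,u|_{\partial\Omega})$ (Rellich in $\Omega$ and on $\partial\Omega$), the operator $\mathbb{T}_{r,s}$ has finite-dimensional kernel and closed range; arguing with the formal adjoint Venttsel problem upgrades this to the full Fredholm property of index zero. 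The inclusions $V_{p,q}\hookrightarrow V_{\tilde p,\tilde q}$ and $L^p\times L^q\hookrightarrow L^{\tilde p}\times L^{\tilde q}$ intertwine $\mathbb{T}_{p,q}$ and $\mathbb{T}_{\tilde p,\tilde q}$. Since \eqref{2.3-AN95} presupposes $u\in V_{p,q}$ and cannot be applied to the given rough solution, and since a bootstrap driven by the first-order terms does not improve integrability, I would route the argument through an invertible zeroth-order shift.

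First I would fix $\lambda>0$ so large that the shifted operators $\CL_\lambda=\CL+\lambda$, $\CB_\lambda=\CB+\lambda$ have trivial kernel in \emph{every} admissible $V_{r,s}(\Omega)$. These operators still satisfy the hypotheses of Theorem~\ref{apriori-estimate} (a constant has vanishing oscillation modulus and is absorbed into \eqref{L4}, \eqref{B4}), while uniqueness for large $\lambda$ follows from a comparison argument of Aleksandrov--Bakel'man type (cf.\ Theorem~\ref{global-max-principle}); alternatively it is forced for all but discretely many $\lambda$ by analytic Fredholm theory once a single invertible shift is exhibited. Theorem~\ref{existence}, applied in each admissible space, then shows that $\mathbb{T}^\lambda_{r,s}=(\CL_\lambda,\CB_\lambda)$ is an \emph{isomorphism} of $V_{r,s}(\Omega)$ onto $L^r(\Omega)\times L^s(\partial\Omega)$, with inverses consistent across exponents by cross-exponent uniqueness.

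Now rewrite the problem for the given $u\in V_{\tilde p,\tilde q}(\Omega)$ as
$$
\mathbb{T}^\lambda_{\tilde p,\tilde q}\,u=\big(f+\lambda u,\;g+\lambda u|_{\partial\Omega}\big).
$$
The decisive point is that on the right $u$ enters only to \emph{zeroth} order, so its integrability is boosted by the Sobolev embeddings $W^2_r(\Omega)\hookrightarrow L^{r^{**}}(\Omega)$ and $W^2_s(\partial\Omega)\hookrightarrow L^{s^{**}}(\partial\Omega)$, with $r^{**}>r$ and $s^{**}>s$. Hence the right-hand side lies in $L^{\min(p,\,r^{**})}(\Omega)\times L^{\min(q,\,s^{**})}(\partial\Omega)$; inverting $\mathbb{T}^\lambda$ in that finer space and identifying the result with $u$ by uniqueness places $u$ in a strictly better $V_{r_1,s_1}(\Omega)$. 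Iterating, the exponents grow at a definite Sobolev rate and saturate at $(p,q)$ after finitely many steps, which is the assertion. Since $\mathbb{T}^\lambda$ inverts the \emph{coupled} interior–boundary system at once, the interaction through the extension operator of Theorem~\ref{extension-theorem} and through the normal-trace term $\beta_0\partial_{\mathbf n}u$ is handled automatically.

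The hard part is exactly the availability of the shift isomorphism $\mathbb{T}^\lambda$ simultaneously in all admissible spaces, i.e.\ the exponent-independence of the kernel and cokernel of $(\CL,\CB)$. A naive iteration built on the terms $b^iD_iu$ with $|\mathbf b|\in L^{n}$, or on $\beta^{*i}d_iu$ with $|\boldsymbol{\beta}^*|\in L^{n-1}$, merely reproduces the current exponent: these coefficients are scaling-critical, and the corresponding maps $W^2_r\to L^r$, $W^2_s\to L^s$ are only \emph{compact}, never integrability-improving. It is the zeroth-order contribution $\lambda u$ that gains under Sobolev embedding and drives the iteration, while the first-order terms are carried along inside $\mathbb{T}^\lambda$ and need no hand estimate. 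Two bookkeeping matters remain: one must keep each intermediate pair $(r_j,s_j)$ inside the admissible region \eqref{pq-cond} by raising the two exponents in sufficiently small coupled steps, and at the critical values $r=n$, $s=n-1$ one must replace the plain Sobolev embeddings by their Orlicz counterparts, exactly as in Cases~1a--1c of the proof of Theorem~\ref{apriori-estimate} and in the conditions \eqref{L3}, \eqref{B3}.
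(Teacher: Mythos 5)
Your overall mechanism --- shift by $\lambda$, invert, let the zeroth-order term $\lambda u$ gain integrability through the double Sobolev embedding $W^2_{\tilde p}(\Omega)\hookrightarrow L^{(\tilde p^*)^*}(\Omega)$, and iterate inside the region \eqref{pq-cond} --- is exactly the paper's mechanism. But the step you yourself flag as ``the hard part'' and leave unresolved is a genuine gap, and in the form you state it, it is false: under the hypotheses of Theorem~\ref{regular} the coupled operator $\mathbb{T}^\lambda_{\tilde p,\tilde q}=(\CL_\lambda,\CB_\lambda)$ is in general not Fredholm on $V_{\tilde p,\tilde q}(\Omega)$ --- indeed it need not even be \emph{bounded} into $L^{\tilde p}(\Omega)\times L^{\tilde q}(\partial\Omega)$. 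The culprit is the normal-trace term: the hypothesis \eqref{B5} is calibrated at $(p,q)$ and, unlike \eqref{L3}--\eqref{L4} and \eqref{B3}--\eqref{B4}, it \emph{strengthens} as the interior exponent decreases (for $p<n$ the required exponent $qp^*/(p^*-qn/(n-1))$ is decreasing in $p^*$, hence increasing as $p$ drops), so \eqref{B5} at $(p,q)$ does not make $u\mapsto\beta_0\partial_{\mathbf n}u$ bounded from $V_{\tilde p,\tilde q}(\Omega)$ to $L^{\tilde q}(\partial\Omega)$. Consequently neither Theorem~\ref{apriori-estimate} nor Theorem~\ref{existence} is available at $(\tilde p,\tilde q)$ for the full coupled system, and analytic Fredholm theory has no ambient operator family to act on. Your fallback for kernel-triviality of the shifted operator via an Aleksandrov--Bakel'man comparison also fails here: Theorem~\ref{apriori-estimate} assumes neither \eqref{B} (its footnote stresses that $\boldsymbol{\beta}$ need not be exterior) nor the sign conditions \eqref{c-gamma}, and since $c$ and $\gamma$ are merely Lebesgue functions, possibly unbounded below, no constant shift makes $c+\lambda\ge 0$ almost everywhere.

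The paper's proof repairs precisely this point by \emph{decoupling before inverting}. One transfers the bad term to the right-hand side, writing $\widetilde\CB u:=-\alpha^{ij}d_id_ju+{\beta^*}^id_iu+\gamma u=g-\beta_0\partial_{\mathbf n}u=:\tilde g$; since $u$ solves the problem, $\tilde g=\widetilde\CB u$ belongs to $L^{\tilde q}(\partial\Omega)$ term by term, even though the product $\beta_0\partial_{\mathbf n}u$ is not controlled a priori. For the pair $(\CL+\lambda,\widetilde\CB)$, whose boundary part has vanishing normal component, the hypotheses on the lower-order terms are monotone in the harmless direction, so the Fredholm property does hold in $V_{\tilde p,\tilde q}(\Omega)$ and $\lambda$ can be chosen off the discrete spectrum. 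Inverting and identifying by uniqueness gives $u\in V_{\mathfrak p,\tilde q}(\Omega)$ with $\mathfrak p=\min\{p,\frac{n\tilde q}{n-1}\}$ --- note the cap $\frac{n\tilde q}{n-1}$ imposed by \eqref{pq-cond}, which your iteration must also respect: the interior exponent cannot be raised past it without raising the boundary exponent first. To raise $\tilde q$, the paper reinstates $\beta_0\partial_{\mathbf n}u$ into the operator, but only along the critical line $\hat q=(1-\frac1n)\hat p$, where the exponent in \eqref{B5} computes to $n-1$ independently of $\hat p$, so Fredholmness is again available; climbing that line and then the vertical segment yields $u\in V_{p,(1-\frac1n)p}(\Omega)$ and finally $u\in V_{p,q}(\Omega)$. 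Without this decoupling/recoupling step your iteration never starts; with it inserted, your outline coincides with the paper's argument.
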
 

\begin{proof}
In case the homogeneous problem \eqref{homogen}
admits only the trivial solution in $V_{\tilde p,\tilde q}(\Omega),$ then uniqueness in $V_{\tilde p,\tilde q}(\Omega)$ implies uniqueness in $V_{\hat p,\hat q}(\Omega)$ for all $\hat p\in[\tilde p,p]$ and $\hat q\in[\tilde q,q]$  that satisfy \eqref{pq-cond}. Therefore,  Theorem~\ref{existence} gives the claim $u\in V_{p,q}(\Omega)$.

Otherwise, if the kernel of the couple $(\mathcal{L},\mathcal{B})$
in $V_{\tilde p,\tilde q}(\Omega)$ is non-trivial, 
then we hope to get at least the Fredholm property in $V_{\tilde p,\tilde q}(\Omega)$. Unfortunately, the hypotheses of Theorem \ref{apriori-estimate} ensure it in $V_{p,q}(\Omega),$ but generally not in $V_{\tilde p,\tilde q}(\Omega)$. The reason is that the requirement \eqref{B5}, in contrast to the other assumptions on the lower-order terms in \eqref{2.1-AN95}--\eqref{2.2-AN95}, becomes stronger when $p$ decreases.
To bypass that obstacle, we transfer the ``bad'' term from the left-hand side into the right one and rewrite the problem \eqref{2.1-AN95}--\eqref{2.2-AN95} as follows
\begin{align*}
& \widetilde\CL u:=(\CL+\lambda) u=f+\lambda u=:\tilde f & \quad \text{a.e. in}\ \Omega;\\
& \widetilde\CB u:=-\alpha^{ij}d_id_ju+{\beta^*}^id_iu+\gamma u=g-\beta_0\partial_{\mathbf{n}}u=:\tilde g & \quad \text{a.e. on}\ \partial\Omega.
\end{align*} 
The operator $\widetilde{\CB}$ has zero normal derivative component, and our hypotheses  ensure the Fredholm property in $V_{\tilde p,\tilde q}(\Omega)$.  
Choosing $\lambda$ in a way to avoid the discrete spectrum of the couple $(\widetilde{\CL},\widetilde{\CB})$, the above problem results uniquely solvable in $V_{\tilde p,\tilde q}(\Omega)$ for any $\tilde f \in L^{\tilde p}(\Omega)$ and $\tilde g \in L^{\tilde q}(\partial\Omega)$. Again, uniqueness in $V_{\tilde p,\tilde q}(\Omega)$ implies uniqueness in $V_{\hat p,\hat q}(\Omega)$ for all $\hat p\in[\tilde p,p]$ and $\hat q\in[\tilde q,q]$ that satisfy \eqref{pq-cond}.
\begin{figure}[h]
\centerline{\includegraphics[scale=.95]{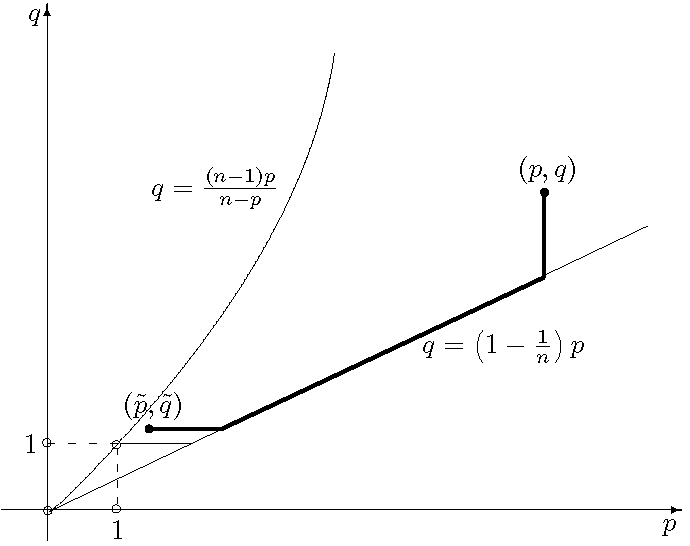}}
\vspace*{-.4cm}
\caption{The passage from $(\tilde p,\tilde q)$ to $(p,q)$}
\label{fig2}
\end{figure}

In general, the assumption \eqref{B5} does not guarantee $\beta_0\partial_{\mathbf{n}}u\in L^{\tilde q}(\partial\Omega)$ for arbitrary $u\in V_{\tilde p,\tilde q}(\Omega)$. Nevertheless, since all terms of $\widetilde{\CB}u$ belong to $L^{\tilde q}(\partial\Omega),$ we conclude $\tilde g\in L^{\tilde q}(\partial\Omega)$.  Moreover, $\tilde f\in L^{\hat p}(\Omega)$ with $\hat p=\min\{p,(\tilde p^*)^*\},$ and since $\tilde p$ and $\tilde q$ satisfy \eqref{pq-cond}, we have $(\tilde p^*)^*>\frac{n\tilde q}{n-1}.$ At that point Theorem~\ref{existence} yields $u\in V_{{\mathfrak p},\tilde q}(\Omega)$ with ${\mathfrak p}=\min\{p, \frac{n\tilde q}{n-1}\}$
(the horizontal tract of the thick line on Fig.~\ref{fig2}).

Now we move back the normal derivative term in the left-hand side and rewrite \eqref{2.1-AN95}--\eqref{2.2-AN95} as 
\begin{equation*}
\widetilde\CL u=\tilde f \quad \text{a.e. in}\ \Omega;\qquad
\CB u=g  \quad \text{a.e. on}\ \partial\Omega.
\end{equation*}
For this problem, the assumptions  ensure the Fredholm property in $V_{\hat p,\hat q}(\Omega)$ for any $\hat p\in[{\mathfrak p}, p],$ $\hat q=(1-\frac{1}{n})\hat p,$ and in $V_{p,\hat q}(\Omega)$ for any $\hat q\in\left[(1-\frac{1}{n})p,q\right].$ So, we can choose again $\lambda$ in a way that this problem is uniquely solvable in $V_{{\mathfrak p},\tilde q}(\Omega)$ for any $\tilde f \in L^{\mathfrak p}(\Omega)$ and $\tilde g \in L^{\tilde q}(\partial\Omega)$. Repeating the previous arguments, we get successively $u\in V_{p,(1-\frac{1}{n})p}(\Omega)$ and $u\in V_{p,q}(\Omega)$ (the oblique and vertical tracts on Fig.~\ref{fig2}).
\end{proof}

Theorem~\ref{existence} shows that uniqueness is a sufficient condition guaranteeing existence of strong solutions to the linear Venttsel problem \eqref{2.1-AN95}--\eqref{2.2-AN95}.
There are various types of additional requirements to impose on the coefficients of  $\CL$ and $\CB$ that ensure the validity of  global maximum principle, and hence triviality of the kernel of \eqref{homogen}. For instance, the following statement holds true.
\begin{thm} \label{global-max-principle}
Let $\partial\Omega \in W^2_n$ and let the operators $\CL$ and $\CB$ be defined by the formulas \eqref{4} and \eqref{5}, respectively. Suppose that $\{a^{ij}\}$ and $\{\alpha^{ij}\}$ are symmetric matrices and that hypotheses \eqref{L2}, \eqref{B2} and \eqref{B} are fulfilled.

Assume also that 
$$
|{\mathbf{b}}| \in L^n (\Omega), \qquad |{\boldsymbol{\beta}}^*| \in L^{n-1}(\partial\Omega)
$$
and
\begin{equation}\label{c-gamma}
c \ge 0 \quad {\text{a.e. in}} \ \Omega, \qquad
\gamma \ge \gamma_0 \quad {\text{a.e. on}} \ \partial\Omega, \quad
\gamma_0={\rm{const}}>0.
\end{equation}

If $u \in V_{n, n-1}(\Omega)$ satisfies 
$$
\CL u \le 0 \quad \text{a.e. in}\ \Omega, \qquad \CB u\le 0 \quad \text{a.e. on}\ \partial\Omega,
$$
then $u \le 0$ in $\overline\Omega$. In particular, 
the problem \eqref{homogen} admits only the trivial solution in the space $V_{n, n-1}(\Omega)$.
\end{thm}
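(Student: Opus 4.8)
The plan is to argue by contradiction, assuming that $M:=\max_{\overline\Omega}u>0$; the maximum is attained since $u\in W^2_n(\Omega)\hookrightarrow\C(\overline\Omega)$. The whole argument rests on comparison (Aleksandrov--Bakel'man) principles and uses only the ellipticity \eqref{L2}, \eqref{B2} of the principal coefficients, not any \textit{VMO} regularity.

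First I would dispose of the case in which $M$ is attained at an interior point. Since $\CL u\le0$ a.e. in $\Omega$, $c\ge0$, $|\mathbf b|\in L^n(\Omega)$, the strong maximum principle for $W^2_n$-subsolutions (Bony) applies: an interior non-negative maximum forces $u\equiv M$ throughout $\overline\Omega$. But then $Du\equiv0$ on $\partial\Omega$, so the boundary operator collapses to $\CB u=\gamma\,M$, and $\CB u\le0$ together with \eqref{c-gamma} gives $\gamma_0 M\le0$, contradicting $M>0$. Hence the maximum is realised only on $\partial\Omega$, at some $x^0\in\partial\Omega$, and $u<M$ in $\Omega$.

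Next I would analyse the boundary relation at $x^0$. Splitting the full gradient into normal and tangential parts, $\beta^iD_iu=\beta_0\,\partial_{\mathbf n}u+{\beta^*}^id_iu$, I rewrite $\CB u\le0$ as
\[
-\alpha^{ij}d_id_ju+{\beta^*}^id_iu+\gamma u\le-\beta_0\,\partial_{\mathbf n}u\qquad\text{a.e. on }\partial\Omega,
\]
the left side being a genuine second-order elliptic operator on the compact boundaryless manifold $\partial\Omega$. At the boundary maximum the tangential gradient of $v:=u\big|_{\partial\Omega}$ vanishes and its tangential Hessian is non-positive, so the first two terms contribute non-negatively; the zeroth-order term contributes $\gamma(x^0)M\ge\gamma_0 M>0$; and, since $u<M$ in $\Omega$ and $\beta_0\ge0$ by the exterior-field hypothesis \eqref{B}, a Hopf boundary-point lemma for the strong interior subsolution yields $\partial_{\mathbf n}u\ge0$, i.e. the right side is non-positive. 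Collecting these signs produces $0\ge\CB u(x^0)>0$, the desired contradiction; thus $M\le0$ and $u\le0$ in $\overline\Omega$. Applying this to both $u$ and $-u$ gives the triviality of the kernel of \eqref{homogen}.

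The main obstacle is that for $u\in V_{n,n-1}(\Omega)$ neither the tangential derivatives of $v$ (the embedding $W^2_{n-1}(\partial\Omega)\hookrightarrow\C^1(\partial\Omega)$ being borderline and false) nor the normal derivative $\partial_{\mathbf n}u$ (defined merely as the trace of $Du\in W^1_n(\Omega)$, hence only as an $L^s(\partial\Omega)$-function) are available pointwise at $x^0$. Consequently the ``derivative test at the maximum'' and the Hopf sign must be recast in integral form: an Aleksandrov--Bakel'man estimate for the tangential operator on the closed manifold $\partial\Omega$---where the strict positivity $\gamma_0>0$ is indispensable, the manifold carrying no boundary to supply data---has to be coupled with an integral (one-sided) Hopf lemma for the interior problem, the latter valid under the mild regularity $\partial\Omega\in W^2_n$, which only furnishes a $\C^{1,1/n}$ interior touching condition rather than an interior sphere. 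Reconciling these two integral principles along the contact set is the technical heart of the proof; the interior strong maximum principle, the gradient splitting, and the Sobolev embeddings guaranteeing that all terms are meaningful are comparatively routine.
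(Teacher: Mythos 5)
You have diagnosed the obstruction correctly, but the proposal stops exactly where the proof has to begin, so there is a genuine gap. Your pointwise scheme at the boundary maximum --- second-derivative test for the tangential operator at $x^0$ plus a Hopf boundary-point lemma giving $\partial_{\mathbf n}u(x^0)\ge0$ --- is unavailable under the stated hypotheses, and not only for the trace-regularity reasons you list: a Hopf lemma is out of reach here independently of the regularity of $u$, because $\partial\Omega\in W^2_n$ embeds only into $\C^{1,1/n}$ (no interior sphere condition, and boundary-point lemmas can fail already in $\C^{1,\alpha}$ domains with $\alpha<1$), while the drift integrability $|\mathbf b|\in L^n(\Omega)$ is precisely the borderline case in which no boundary-point lemma is available. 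Moreover $\partial_{\mathbf n}u$ exists only as an $L^s(\partial\Omega)$-trace, so there is no pointwise sign to feed into the boundary relation at $x^0$. You acknowledge all this and assert that the two pointwise facts ``must be recast in integral form'' and that reconciling a boundary ABP estimate with an integral Hopf lemma ``is the technical heart of the proof'' --- but that heart is exactly what the proposal does not supply, and it is doubtful it can be supplied in the decoupled form you envisage: the coupling term $\beta_0\,\partial_{\mathbf n}u$ carries no usable sign information on its own. As written, this is a correct identification of the difficulty rather than a proof.

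The paper resolves the difficulty with a single ingredient that replaces both of your missing principles at once: the \emph{local Aleksandrov-type estimate for the coupled Venttsel problem itself} (\cite[Theorem~3]{AN95a}), in which the exterior-field hypothesis \eqref{B} on $\beta_0$ is built in, so no surrogate for the Hopf lemma is ever needed and no ``reconciliation along the contact set'' arises. Concretely, after the interior Aleksandrov--Bakel'man principle places the maximum at some $x^0\in\partial\Omega$ (your preliminary Bony step is superfluous for this, and is itself delicate at drift integrability exactly $L^n$), one flattens $\partial\Omega$ near $x^0$ and applies that estimate to the explicit quadratic barrier
\begin{equation*}
u_{\varepsilon}(x)=u(x)-M+\varepsilon\Bigl(1-\frac{|x'|^2}{\rho^2}+\frac{C^2x_n^2}{\rho^2}-\frac{2Cx_n}{\rho}\Bigr),
\qquad C=\nu^{-1}\sqrt{n-1},
\end{equation*}
on the thin set $\mathcal{O}_\rho$, where $u_\varepsilon\le0$ on $\partial\mathcal{O}_\rho\setminus\Gamma(\mathcal{O}_\rho)$ but $u_\varepsilon(0)=\varepsilon>0$. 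The contradiction is then obtained quantitatively, not at a point: taking $\varepsilon$ small relative to $M$ and $\gamma_0\rho^2\nu$ absorbs the zeroth-order term and yields the inequality \eqref{uzhas}; letting $\rho\to0$ kills the term $\|\mathbf b\|_{n,\mathcal{O}_\rho}$ by absolute continuity of the $L^n$-norm; and finally letting $\varepsilon\to0$ at fixed $\rho$ annihilates the truncated $\boldsymbol{\beta}^*$-term, since its cut-off level $\gamma_0M\rho/(4\varepsilon)$ blows up. If you wish to complete your program, the missing integral principle you should import is exactly this Venttsel ABP estimate; with it, the barrier computation above closes the argument, and the whole interior-maximum and Hopf machinery can be discarded.
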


\begin{proof}
We argue by contradiction. 
Note that $u \in V_{n, n-1}(\Omega)$ implies $u\in \mathcal{C}(\overline{\Omega})$ and let $M:=\max_{\overline\Omega}u>0$.
By the Aleksandrov--Bakel'man maximum principle (\cite{Al63}, see also Theorem~1.5 in the survey paper  \cite{N05}), the maximum of $u$ is achieved at a point $x^0 \in \partial\Omega$. We take a coordinate system centered at the point $x^0$ and flatten $\partial\Omega$ in a neighborhood of $x^0$, so that $\Omega\cap B_R\subset \mathbb R^n_+$ for some $R>0$. It is worth noting that all the assumptions of Theorem~\ref{global-max-principle} are invariant with respect to this coordinate transformation.

Further, we put $C=\nu^{-1}\sqrt{n-1}$ and  introduce the set
$$
\mathcal{O}_\rho =\big\{x\in{\mathbb R}^n\colon\quad |x'|<\rho ,\ 0<x_n<C^{-1}\rho \big\}
$$
and the function
$$
u_{\varepsilon}(x)=u(x)-M+\varepsilon\left(1-\frac {|x'|^2}{\rho ^2}+\frac {C^2x_n^2}{\rho ^2}
-\frac {2Cx_n}{\rho }\right),
$$
where $\rho<R$ and $\varepsilon <M$ are positive parameters to be chosen later.

It is evident that $u_{\varepsilon}|_{\partial \mathcal{O}_\rho  \setminus \Gamma(\mathcal{O}_\rho )}\le 0$ and $u_{\varepsilon}(0)=\varepsilon>0$. Applying the local Aleksandrov-type estimate for the Venttsel problem  \cite[Theorem~3]{AN95a} (see also \cite[Theorem~3.1]{N05}) to the function $u_{\varepsilon}$ in $\mathcal{O}_\rho $, we obtain 
\begin{align*}
\varepsilon \le&\  \widetilde C\big(n, \nu, \|{\mathbf b}\|_{n,\mathcal{O}_\rho },
\|{\boldsymbol{\beta}}^*\|_{n-1,\Gamma(\mathcal{O}_\rho )} \big)\times   \\
&\ \times\rho\left[\frac {2\varepsilon C}{\rho }
\|{\bf b}\|_{n,\mathcal{O}_\rho }
+\left\|\left(-\gamma_0(M-\varepsilon)+\frac {2\varepsilon (n-1)}{\nu \rho ^2}+\frac {2\varepsilon}{\rho }
|{\boldsymbol{\beta}}^*|\right)_+\right\|_{n-1,\Gamma(\mathcal{O}_\rho )}\right].
\end{align*}
Thus, for $\varepsilon<\frac M2\big(1+\frac {2(n-1)}{\gamma_0\rho ^2\nu}\big)^{-1}$,
 we have 
\begin{equation} \label{uzhas}
1\le 2\widetilde C \left[C
\|{\bf b}\|_{n,\mathcal{O}_\rho }
+\left\|\left(|{\boldsymbol{\beta}}^*|-\frac {\gamma_0M\rho }{4\varepsilon}
\right)_+\right\|_{n-1,\Gamma(\mathcal{O}_\rho )}\right].
\end{equation}

The first term in the square brackets in \eqref{uzhas} tends to zero as $\rho  \to 0$. Therefore, there exists a value of $\rho $ such that
$$
\frac 1{4\widetilde C}\le \left\|\left(
|{\boldsymbol{\beta}}^*|-\frac {\gamma_0M\rho }{4\varepsilon}
\right)_+\right\|_{n-1,\Gamma(\mathcal{O}_\rho )}.
$$
However, the right-hand side of the last inequality tends to zero as $\varepsilon \to 0$ and the contradiction obtained gives the claim.
\end{proof}

\begin{cor}\label{solv}
Let the exponents $p$ and $q$ satisfy \eqref{pq-cond},   $\partial\Omega \in \C^{1,1},$
and assume that conditions \eqref{L1}--\eqref{L2}, \eqref{B1}--\eqref{B2}, \eqref{B} and \eqref{c-gamma} are satisfied. 

Suppose also that
\begin{gather*}
\begin{aligned}
&c, |\mathbf{b}| \in L^p(\Omega), && \text{if} \quad p > n,\\
&c, |\mathbf{b}| \left( \log{(1+|\mathbf{b}|)}\right)^{1-1/n} \in L^n(\Omega), && \text{if} \quad p\le n;
\end{aligned} 
\end{gather*}
\begin{gather*}
\begin{aligned}
&\gamma, |\boldsymbol{\beta}^*|\in L^q(\partial\Omega), &&  \text{if} \quad q > n-1,\\
&\gamma, |\boldsymbol{\beta}^*| \left( \log{(1+|\boldsymbol{\beta}^*|)}\right)^{1-1/(n-1)} \in L^{n-1}(\partial\Omega), && \text{if} \quad q\le n-1;
\end{aligned}
\end{gather*}
\begin{gather*}
\begin{aligned}
&\beta_0\in L^{q}(\partial\Omega), && \text{if} \quad p >n,\\
&\beta_0\in L^{qp^*/(p^*-qn/(n-1))}(\partial\Omega), && \text{if} \quad p \le n,\ \ q>
\left(1-\frac{1}{n}\right)p,\\
&\beta_0 \left( \log{(1+|\beta_0|)}\right)^{1-1/n} \in L^{n-1}(\partial\Omega),
 && \text{if} \quad \frac n{n-1}<p \le n,\ \ q=\left(1-\frac{1}{n}\right)p.
\end{aligned}
\end{gather*}
Then the problem \eqref{2.1-AN95}--\eqref{2.2-AN95} is uniquely solvable in
$V_{p,q}(\Omega)$ for any $f \in L^p(\Omega)$ and $g \in L^q(\partial\Omega).$
\end{cor}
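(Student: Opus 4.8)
The proof rests on the three main results of the section. The engine is Theorem~\ref{existence}: unique solvability of \eqref{2.1-AN95}--\eqref{2.2-AN95} in $V_{p,q}(\Omega)$ follows as soon as one knows that (i) the hypotheses \eqref{L1}--\eqref{L4} and \eqref{B1}--\eqref{B5} of Theorem~\ref{apriori-estimate} hold at the exponents $(p,q)$, and (ii) the homogeneous problem \eqref{homogen} admits only the trivial solution in $V_{p,q}(\Omega)$. The plan is to secure (ii) from the comparison principle and to read off (i) from the disjunctive integrability assumptions of the statement. I would first establish (ii) at the distinguished exponents $(n,n-1)$: conditions \eqref{L2}, \eqref{B2}, \eqref{B} and \eqref{c-gamma} are precisely the ellipticity and sign requirements of Theorem~\ref{global-max-principle}, one has $\partial\Omega\in\C^{1,1}\subset W^2_n$, and the assumptions force $|\mathbf{b}|\in L^n(\Omega)$ and $|\boldsymbol{\beta}^*|\in L^{n-1}(\partial\Omega)$ (directly via $L^p\subset L^n$, $L^q\subset L^{n-1}$ in the subcritical ranges, and because the logarithmic refinements are stronger in the critical ones). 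Hence Theorem~\ref{global-max-principle} applies and \eqref{homogen} has only $u\equiv0$ in $V_{n,n-1}(\Omega)$.

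Next I would propagate triviality of the kernel to an arbitrary pair $(p,q)$ obeying \eqref{pq-cond}. Let $u\in V_{p,q}(\Omega)$ solve \eqref{homogen}. If $p\ge n$, then \eqref{pq-cond} forces $q\ge n-1$ and thus $V_{p,q}(\Omega)\hookrightarrow V_{n,n-1}(\Omega)$, so the previous step gives $u\equiv0$ at once. If $p<n$, the data of \eqref{homogen} vanish and so lie in every Lebesgue class; the regularization Theorem~\ref{regular} then lifts $u$ into $V_{n,\hat q}(\Omega)$ for some $\hat q>\max\{q,n-1\}$, and since $V_{n,\hat q}(\Omega)\hookrightarrow V_{n,n-1}(\Omega)$ we again conclude $u\equiv0$. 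Thus the kernel is trivial in $V_{p,q}(\Omega)$ for every admissible pair.

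It remains to verify (i) and to conclude. For every admissible $(p,q)$ off the critical line $\{p=n,\ q>n-1\}$, the case distinctions of the statement reproduce \eqref{L3}--\eqref{L4} and \eqref{B3}--\eqref{B5} at $(p,q)$; in particular a direct computation identifies the exponent $qp^*/(p^*-qn/(n-1))$ appearing in \eqref{B5} with the one assumed, and checks that on the lower boundary $q=(1-\tfrac1n)p$ the Orlicz refinement of the hypothesis matches the critical trace embedding. Combined with the triviality of the kernel, Theorem~\ref{existence} then yields the unique $u\in V_{p,q}(\Omega)$. On the critical line $p=n$, $q>n-1$, assumption \eqref{B5} would require $\beta_0(\log(1+|\beta_0|))^{1-1/n}\in L^q(\partial\Omega)$, which is not granted by the hypotheses at $(n,q)$; there I would instead first solve in $V_{n,n-1}(\Omega)$ — whose \eqref{B5}-requirement $\beta_0(\log(1+|\beta_0|))^{1-1/n}\in L^{n-1}(\partial\Omega)$ does follow from the assumed $\beta_0\in L^q(\partial\Omega)$ with $q>n-1$ — using $f\in L^n(\Omega)$ and $g\in L^q(\partial\Omega)\subset L^{n-1}(\partial\Omega)$, and then raise the boundary integrability to $V_{n,q}(\Omega)$ by Theorem~\ref{regular} along a path issuing from $(\tilde p,n-1)$ with $n/2<\tilde p<n$.

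The principal obstacle is exactly this exponent bookkeeping around \eqref{B5}. Unlike the other lower-order conditions, the requirement on the normal component $\beta_0$ sharpens as $p$ decreases and becomes genuinely critical at $p=n$, so one must confirm that the single case-split hypothesis controls simultaneously the target $(p,q)$, the reference point $(n,n-1)$, and every intermediate exponent encountered along the regularization path of Theorem~\ref{regular} (cf. Fig.~\ref{fig2}). The strict monotonicity $\tilde p<p$, $\tilde q<q$ built into that theorem is a further nuisance, forcing the small exponent perturbations used above together with a careful check that they never leave the admissible region \eqref{pq-cond}.
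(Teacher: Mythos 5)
Your architecture is the same as the paper's: establish triviality of the kernel of \eqref{homogen} in $V_{n,n-1}(\Omega)$ via Theorem~\ref{global-max-principle}, transfer it to the given $(p,q)$ either by the embedding $V_{p,q}(\Omega)\hookrightarrow V_{n,n-1}(\Omega)$ when $p\ge n$ (your observation that \eqref{pq-cond} then forces $q\ge n-1$ is correct and makes the paper's case split exhaustive), or by the improving-of-integrability Theorem~\ref{regular} applied to the homogeneous problem when $p<n$, and then conclude by Theorem~\ref{existence}. The paper's proof is exactly this, compressed into two sentences. However, two of your concrete steps fail. First, the claim that Theorem~\ref{regular} lifts a homogeneous solution into $V_{n,\hat q}(\Omega)$ for some $\hat q>\max\{q,n-1\}$ overshoots the hypotheses: when $q\le n-1$ the corollary grants for $\gamma$ and $|\boldsymbol{\beta}^*|$ only the logarithmically refined $L^{n-1}(\partial\Omega)$ conditions, whereas \eqref{B3}--\eqref{B4} at any exponent $\hat q>n-1$ demand membership in $L^{\hat q}(\partial\Omega)$. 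This is repairable --- the admissible target is $(n,n-1)$ (or $(n,q)$ if $q\ge n-1$), which is all you need --- but as written the step is unsupported.

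Second, and more seriously, your patch on the line $p=n$, $q>n-1$ is circular. You rightly note that the hypotheses there furnish only $\beta_0\in L^{q}(\partial\Omega)$, while \eqref{B5} at $(n,q)$ asks for $\beta_0\left(\log(1+|\beta_0|)\right)^{1-1/n}\in L^{q}(\partial\Omega)$; but your remedy --- solve in $V_{n,n-1}(\Omega)$ and then lift to $V_{n,q}(\Omega)$ by Theorem~\ref{regular} --- invokes a theorem whose hypotheses at the target $(n,q)$ include precisely the missing \eqref{B5}-condition, so nothing is gained. Even unwinding the proof of Theorem~\ref{regular} does not help: for $u\in V_{n,n-1}(\Omega)$ the trace of $\partial_{\mathbf{n}}u$ lies only in the exponential Orlicz class, so $\beta_0\in L^q(\partial\Omega)$ gives $\beta_0\,\partial_{\mathbf{n}}u\in L^{q'}(\partial\Omega)$ for every $q'<q$ but not for $q'=q$, and the bootstrap stalls strictly below $q$. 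The paper performs no such detour: on the closed region $p\ge n$, $q\ge n-1$ it applies Theorems~\ref{global-max-principle} and \ref{existence} directly, its hypotheses on $\beta_0$ being read as matching \eqref{B5} there; note that for $p<n$ the exponent $qp^*/(p^*-qn/(n-1))$ is strictly larger than $q$ (since $qn/(n-1)\ge p$ by \eqref{pq-cond}), which absorbs the logarithmic refinement in all cases actually traversed by the regularization path. Your instinct that \eqref{B5} is the delicate hypothesis is sound --- it is exactly why the proof of Theorem~\ref{regular} transfers $\beta_0\,\partial_{\mathbf{n}}u$ to the right-hand side --- but your fix does not close the case you isolated, so the proposal is left with a gap on the line $p=n$, $q>n-1$.
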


\begin{proof}
If $p\geq n$ and $q\geq n-1,$ then Theorem~\ref{global-max-principle} ensures triviality of the solution of \eqref{homogen} in $V_{n,n-1}(\Omega)$ and thus in $V_{p,q}(\Omega)$ as well,  and the claim follows from Theorem~\ref{existence}.

Otherwise, the integrability requirements on the lower-order coefficients of $\mathcal{L}$ and $\mathcal{B}$  guarantee that any solution $u\in V_{p,q}(\Omega)$ of \eqref{homogen} in fact belongs to $V_{n,n-1}(\Omega)$ through Theorem~\ref{regular}. Then the desired unique solvability  follows once again from \eqref{c-gamma} and Theorems~\ref{global-max-principle} and \ref{existence}.
\end{proof}

\section{The quasilinear Venttsel problem}
\label{sec4}

We aim now to the study of quasilinear elliptic equation
\begin{equation} \label{1.1-AN95}
-a^{ij}(x,u)D_iD_ju+a(x,u,Du)=0 \quad \text{a.e. in}\ \Omega
\end{equation}
coupled with the quasilinear Venttsel boundary condition
\begin{equation} \label{1.2-AN95}
-\alpha^{ij}(x,u)d_id_iu+\alpha (x,u,Du)=0 \quad \text{a.e. on}\ \partial\Omega
\end{equation}
over domains $\Omega$ with $\mathcal{C}^{1,1}$-smooth boundaries. As in the previous Section, we suppose that $n\geq 3.$

Notice that \eqref{1.2-AN95} is not an autonomous equation on $\partial\Omega$ because it involves not only tangential derivatives but also the normal component of the gradient $Du$.

We suppose that the functions $a^{ij}(x,z)$ and $a(x,z,p)$ are Carath\'{e}odory functions, i.e., these are measurable in $x \in \Omega$ for all $(z,p) \in \mathbb{R}\times\mathbb{R}^n$ and continuous with respect to $z$ and $p$ for almost all $x\in \Omega$. 
The equation \eqref{1.1-AN95} will be assumed to be uniformly elliptic, that is, for almost all $ x\in \Omega$ and for all $z\in \mathbb{R}$ we have
\begin{equation} 
\label{A1}
\begin{gathered} 
\nu |\xi|^2 \le a^{ij}(x,z)\xi_i\xi_j \le \nu^{-1} |\xi|^2 \quad \forall \xi\in \R^n, \ \nu=\text{const}>0, \quad a^{ij}=a^{ji}.
\tag{A1}
\end{gathered}
\end{equation}

Regarding the regularity conditions of the coefficients $a^{ij},$ we suppose that
\begin{equation} 
\label{A2}
\begin{gathered}
a^{ij}(\cdot,z)\in \textit{VMO}\   \textit{locally uniformly in}\ z, \textit{that is}, \\ 
\lim_{r\to 0} \sup_{z\in [-M,M]} \omega_{a^{ij}(\cdot,z)}(r)=0,
\tag{A2}
\end{gathered}
\end{equation}
where $\omega_{a^{ij}(\cdot,z)}$ is the \textit{VMO}-modulus of $a^{ij}(\cdot,z)$ defined by \eqref{eq_gamma} with $B_{\rho}$ replaced by $B_{\rho} (x) \cap \Omega$, $x\in \Omega$. Moreover, we need $a^{ij}$ to be   
locally uniformly continuous of with respect to $z,$ uniformly in $x:$
\begin{equation} 
\label{A3}
\begin{gathered}
|a^{ij}(x,z)-a^{ij}(x,\widehat{z})|\le \tau_M(|z-\widehat{z}|)\quad \textit{a.e. in}\ \Omega,\quad
\forall  z, \widehat{z}\in [-M,M]\\ 
\textit{with a non-decreasing function}\ 
\tau_M(t),\ \lim_{t \to 0}\tau_M (t)=0.
\tag{A3}
\end{gathered}
\end{equation}

The function $a(x,z,p)$ is assumed to grow quadratically  with respect to the gradient, i.e., for almost all $x\in \Omega$ and for all $(z,p)\in \mathbb{R}\times\mathbb{R}^n$
\begin{equation} 
\label{A4}
|a(x,z,p)|\le \eta(|z|)\Big(\mu |p|^2+b(x)|p|+\Phi(x)\Big)
\tag{A4}
\end{equation}
with a constant $\mu\ge0$ and a non-decreasing function $\eta\in\mathcal{C}(\mathbb R_+),$ and where 
\begin{equation} \label{A5}
 b \left(\log{(1+|b|)}\right)^{1-1/n} \in L^n(\Omega),\quad \Phi \in L^n(\Omega).
\tag{A5}
\end{equation}

Further on, we assume that the boundary condition 	\eqref{1.2-AN95} is a uniformly elliptic Venttsel condition in the sense that  
for almost all $x\in \partial\Omega$ and for all $(z,p) \in \mathbb{R}\times \mathbb{R}^n$ we have:
\begin{equation} \label{V}
\begin{gathered}
\textit{the function}\ \alpha(x,z,p)\ \textit{is weakly differentiable with respect to}\ p_i,\ \textit{and}\\
0 \le \alpha_{p_i}(x,z,p)\mathbf{n}_i(x) \le \eta(|z|)\beta_0(x),
\tag{V}
\end{gathered}
\end{equation}
with $\eta$ as above and
\begin{equation} \label{V0}
\beta_0 \left(\log{(1+|\beta_0|)}\right)^{1-1/n} \in L^{n-1}(\partial\Omega)
\tag{V0}
\end{equation}
and
\begin{equation} \label{V1}
\begin{gathered}
\nu |\xi^*|^2 \le \alpha^{ij}(x,z)\xi^*_i\xi^*_j \le \nu^{-1} |\xi^*|^2 \quad \forall \xi^*\in \R^n, \quad \xi^* \perp \mathbf{n}(x),\\ \nu=\text{const}>0, \quad \alpha^{ij}=\alpha^{ji},
\tag{V1}
\end{gathered}
\end{equation}
In addition, we impose regularity  conditions on the coefficients $\alpha^{ij}$ similar to these required for $a^{ij}$. Precisely,
\begin{equation} \label{V2}
\begin{gathered}
\alpha^{ij}(\cdot ,z)\in \textit{VMO}\   \textit{locally uniformly in}\ z, \ \textit{that is},\\
\lim_{r\to 0} \sup_{z\in [-M,M]} \omega_{\alpha^{ij}(\cdot,z)}(r)=0,
 \tag{V2}
\end{gathered}
\end{equation}
where $\omega_{\alpha^{ij}(\cdot ,z)}$ is the \textit{VMO}-modulus of $\alpha^{ij}(\cdot,z)$ defined by \eqref{eq_gamma} with $\partial\Omega \cap B_{\rho} (x)$, $x\in \partial\Omega$, in the place of $B_{\rho}$,
and
\begin{equation} \label{V3}
\begin{gathered}
|\alpha^{ij}(x,z)-\alpha^{ij}(x,\widehat{z})|\le {\tau}_M(|z-\widehat{z}|)\quad \textit{a.e. on}\ \partial\Omega,
\quad \forall  z, \widehat{z}\in [-M,M],\\ \textit{with}\ 
{\tau}_M(t)\
\textit{as in}\ \eqref{A3}.
\tag{V3}
\end{gathered}
\end{equation}

The quasilinear term $\alpha(x,z,p)$ of \eqref{1.2-AN95} is required to support quadratic growth with respect to the tangential gradient, i.e., for almost all $x\in \partial\Omega$ and for all $z\in \mathbb{R}$
\begin{equation} \label{V4}
|\alpha(x,z,p^*)|\le \eta(|z|)\Big(\mu |p^*|^2+\beta (x)|p^*|+\Theta (x)\Big)\quad \forall p^*\in\mathbb{R}^n,\quad p^*\perp \mathbf{n}(x)
\tag{V4}
\end{equation}
with $\mu$ and $\eta$ as in \eqref{A4} and where
\begin{equation} \label{V5}
\beta \left(\log{(1+|\beta|)}\right)^{1-1/(n-1)} \in L^{n-1}(\partial\Omega),\quad 
\Theta \in L^{n-1}(\partial\Omega).
\tag{V5}
\end{equation}

It is well known (see \cite[Lemma~2.1]{P95}) that conditions \eqref{A2}--\eqref{A3} and \eqref{V2}--\eqref{V3} provide for $u(x) \in \C(\overline{\Omega})$ the inclusions $a^{ij}(x, u(x)) \in \textit{VMO} \cap L^{\infty}(\Omega)$ and $\alpha^{ij}(x,u(x)) \in \textit{VMO} \cap L^{\infty}(\partial\Omega)$ with \textit{VMO}-moduli bounded in terms of the continuity modulus of $u(x)$ and $\sup_\Omega|u|$.\medskip

\smallskip

The strong solvability of the problem \eqref{1.1-AN95}--\eqref{1.2-AN95} will be proved in  $V_{n,n-1}(\Omega)$ by the aid of the Leray--Schauder fixed point theorem. To apply it, we have to derive \textit{a~priori} estimates in a suitable functional space for any solution to a family of quasilinear Venttsel problems. Following the classical approach of O.A.~Ladyzhenskaya and N.N.~Ural'tseva~\cite{LU86}, we obtain these estimates assuming  we already dispose of a bound for the supremum norm $\sup_\Omega|u|$ of a solution $u \in  V_{n,n-1}(\Omega)$.

\medskip

We recall, first of all, the \textit{a~priori} estimate for the H{\"o}lder norm of a solution.
\begin{prop}[Theorem 1$'$ in \cite{AN95a}] \label{Holder-est}
Let $\partial\Omega \in W^2_n$. Suppose that the function $u \in  V_{n,n-1}(\Omega)$
is a solution of \eqref{1.1-AN95}--\eqref{1.2-AN95}. 

Assume also that conditions \eqref{A1}, \eqref{A4}, \eqref{V}, \eqref{V1} and \eqref{V4} are satisfied with
$$
b,\Phi\in L^n(\Omega);\qquad \beta_0,\beta,\Theta\in L^{n-1}(\partial\Omega).
$$

Then there exists a constant $\lambda >0$ depending only on $n$, $\nu$ and the properties of $\partial\Omega$, such that 
$$
\|u\|_{\mathcal{C}^{0,\lambda} (\overline\Omega)}\le M_{\lambda},
$$
where $M_{\lambda}$ depends only on $n$, $\nu$, $\mu$,  the properties of $\partial\Omega$, $M_0=\sup_{\Omega}|u|$, ${\eta}(M_0)$, $\|\Phi\|_{n,\Omega}$, $\|\Theta\|_{n-1,\partial\Omega}$ and on the moduli of continuity of the functions $b$, $\beta_0$ and $\beta$ in the corresponding Lebesgue spaces. 
\end{prop}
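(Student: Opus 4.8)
The plan is to establish the Hölder bound by a De Giorgi--Nash--Moser oscillation-decay argument, adapted to the Venttsel coupling and to the quadratic gradient growth, with the decay estimates at the boundary extracted from the local Aleksandrov-type estimate for Venttsel problems (Theorem~3 in \cite{AN95a}), exactly the tool already exploited in the proof of Theorem~\ref{global-max-principle}.

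First I would reduce to a problem with bounded principal coefficients. Since $u\in V_{n,n-1}(\Omega)$ embeds into $\mathcal{C}(\overline\Omega)$ and the quantity $M_0=\sup_\Omega|u|$ is at our disposal, conditions \eqref{A1}, \eqref{A3}, \eqref{V1}, \eqref{V3} guarantee that the frozen coefficients $a^{ij}(x,u(x))$ and $\alpha^{ij}(x,u(x))$ are bounded, measurable, and uniformly elliptic with constant $\nu$. The genuinely nonlinear obstruction is then isolated in the terms $\mu|p|^2$ and $\mu|p^*|^2$ of \eqref{A4} and \eqref{V4} (with $\eta(|u|)\le\eta(M_0)$), the remaining contributions being lower order in the precise senses fixed by \eqref{A5} and \eqref{V5}.

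The core is a growth lemma, proved separately in the interior and at the boundary. In the interior, \eqref{1.1-AN95} is a quasilinear equation with natural (quadratic) gradient growth, so I would use exponential test functions of the type $\zeta^2\big(e^{\kappa(u-k)_+}-1\big)$ (equivalently, an exponential change of the dependent variable) to absorb $\mu|Du|^2$ into the principal part and derive Caccioppoli-type energy inequalities; De~Giorgi iteration then yields $\operatorname{osc}_{B_{R/2}}u\le\theta\,\operatorname{osc}_{B_R}u+CR^{\sigma}$ with $\theta<1$. At a boundary point I would flatten $\partial\Omega$ (licit since $\partial\Omega\in W^2_n$) and regard \eqref{1.1-AN95}--\eqref{1.2-AN95} as a coupled system on $\mathcal{O}_\rho$ and $\Gamma(\mathcal{O}_\rho)$. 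Here the decisive tool is the local Aleksandrov-type estimate of \cite[Theorem~3]{AN95a}, which controls the drop of a supersolution by the $L^n(\mathcal{O}_\rho)$- and $L^{n-1}(\Gamma(\mathcal{O}_\rho))$-norms of the data --- precisely the norms listed in the statement. Testing it against truncations of $u$ and against quadratic-in-$x$ barriers of the type built in the proof of Theorem~\ref{global-max-principle} gives a boundary oscillation-decay estimate of the same form.

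The main obstacle is the boundary step, where the two decay mechanisms must be run \emph{simultaneously} and balanced. The boundary condition \eqref{1.2-AN95} is itself a second-order elliptic equation on $\partial\Omega$, coupled to the interior through the normal flux singled out by \eqref{V}: the term $\alpha_{p_i}\mathbf{n}_i\,\partial_{\mathbf{n}}u$ is governed by the interior estimate, while the tangential quadratic term $\mu|p^*|^2$ must be absorbed by the same exponential-test-function device, now on the $(n-1)$-dimensional surface, where the Yudovich--Pohozhaev/Orlicz embeddings make \eqref{V0} and \eqref{V5} the sharp integrability thresholds. Once a uniform decay factor $\theta<1$ is secured in both regimes, a standard dyadic iteration converts the decay into the asserted estimate $\|u\|_{\mathcal{C}^{0,\lambda}(\overline\Omega)}\le M_\lambda$; tracking the dependence of $\lambda$ and $M_\lambda$ on $n$, $\nu$, $\mu$, the properties of $\partial\Omega$, $M_0$, $\eta(M_0)$, and the data norms through this iteration is then routine.
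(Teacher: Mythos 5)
The paper itself contains no proof of this proposition: it is imported verbatim as Theorem~1$'$ of \cite{AN95a} (the text says the H\"older estimate ``follows for free from \cite{AN95a}''), so your attempt must be measured against the argument in that reference, which runs entirely through Aleksandrov--Bakel'man--Pucci-type measure estimates and growth lemmas in the Krylov--Safonov style, both in the interior and at the boundary, with the boundary decay extracted from the local Aleksandrov estimate \cite[Theorem~3]{AN95a} that you correctly identify. Your boundary step is therefore in the right spirit.

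The genuine gap is your interior step. Equation \eqref{1.1-AN95} is in \emph{nondivergence} form, and under the hypotheses of the proposition the principal coefficients $a^{ij}(x,u(x))$ are only bounded, measurable and elliptic: note that \eqref{A2}--\eqref{A3} and \eqref{V2}--\eqref{V3}, which you invoke in your reduction, are \emph{not} among the assumptions --- only \eqref{A1}, \eqref{A4}, \eqref{V}, \eqref{V1}, \eqref{V4} are, and this is essential, since $\lambda$ and $M_\lambda$ are claimed to depend on $\nu$ and the listed data but on no modulus of continuity or \textit{VMO}-modulus of the coefficients. Consequently the Caccioppoli-type energy inequalities you propose, obtained by testing with $\zeta^2\big(e^{\kappa(u-k)_+}-1\big)$, are not available: deriving them requires moving a derivative off $a^{ij}D_iD_ju$, i.e.\ differentiating $a^{ij}$, which is impossible for merely measurable coefficients; there is no divergence structure to exploit, and De~Giorgi iteration simply does not apply to this class of strong solutions. (The exponential \emph{change of unknown} to absorb $\mu|Du|^2$ is fine, being a pointwise substitution valid a.e.\ for $W^2_n$-solutions; it is the energy step after it that fails.) The interior oscillation decay must instead come from Krylov--Safonov growth lemmas, whose proof rests on exactly the ABP measure estimates you already use at the boundary --- and this is how Theorem~1$'$ is actually proved in \cite{AN95a}. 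Two smaller inaccuracies: the Orlicz refinements \eqref{V0} and \eqref{V5} are likewise not hypotheses here (only $\beta_0,\beta,\Theta\in L^{n-1}(\partial\Omega)$ is assumed), so the Yudovich--Pohozhaev embedding cannot enter as a ``sharp threshold'' in this proof; and in any iteration scheme you would still need to verify that $\lambda$ depends only on $n$, $\nu$ and the properties of $\partial\Omega$, which your sketch does not track.
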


The key \textit{a~priori} estimate of our approach is the gradient one.

\begin{thm}\label{Du-est}
Let $\partial\Omega \in \C^{1,1}$ and assume  that conditions \eqref{A1}--\eqref{A5}, \eqref{V}, \eqref{V0}--\eqref{V5} are satisfied.

Then any solution
 $u \in  V_{n,n-1}(\Omega)$ of the problem
\eqref{1.1-AN95}--\eqref{1.2-AN95} fulfills
the estimate
\begin{equation}
\label{grad-est}
\|Du\|_{2n,\Omega}+\|du\|_{2(n-1),\partial\Omega} \le M_1
\end{equation}
with a constant $M_1$ depending on:
\begin{itemize}
    \item $n$, $\nu$, $\mathrm{diam}\,\Omega$ and the properties of $\partial\Omega$;
    \item $M_0=\sup_{\Omega}|u|$ and ${\eta}(M_0)$;
    \item the norms $\|\Phi\|_{n,\Omega}$ and $\|\Theta\|_{n-1,\partial\Omega}$;
    \item the constant $\mu$ and the moduli of continuity of the functions $b$, $\beta_0$ and $\beta$ in the Orlicz spaces defined by conditions \eqref{A5}, \eqref{V0} and \eqref{V5}, respectively;
    \item the \textit{VMO}-moduli w.r.t. $x$ and on the moduli of continuity w.r.t. $z$  of the Carath\'eodory functions $a^{ij}(x,z)$ and $\alpha^{ij}(x,z)$, see conditions \eqref{A2}--\eqref{A3}, \eqref{V2}--\eqref{V3}.
\end{itemize}
\end{thm}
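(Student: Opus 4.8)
The plan is to freeze the principal part of the operators by substituting the (a~priori Hölder continuous) solution $u$ into its first slot, thereby recasting \eqref{1.1-AN95}--\eqref{1.2-AN95} as a \emph{linear} Venttsel problem with \textit{VMO} coefficients whose right-hand sides carry the quadratic gradient nonlinearity, and then to control that nonlinearity by a continuation argument modelled on Amann--Crandall~\cite{AC78}.

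First I would freeze the principal coefficients. Since $u\in V_{n,n-1}(\Omega)\subset\mathcal{C}(\overline\Omega)$ and $\|u\|_{\mathcal{C}^{0,\lambda}(\overline\Omega)}\le M_\lambda$ by Proposition~\ref{Holder-est}, conditions \eqref{A2}--\eqref{A3} and \eqref{V2}--\eqref{V3} together with \cite[Lemma~2.1]{P95} guarantee that $\widehat a^{ij}(x):=a^{ij}(x,u(x))$ and $\widehat\alpha^{ij}(x):=\alpha^{ij}(x,u(x))$ lie in $\textit{VMO}\cap L^\infty$, with \textit{VMO}-moduli controlled only by $M_0$, $M_\lambda$ and the data of \eqref{A2}--\eqref{A3}, \eqref{V2}--\eqref{V3}. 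The problem then reads as a linear Venttsel problem for $\widehat{\CL}v=-\widehat a^{ij}D_iD_jv$, $\widehat{\CB}v=-\widehat\alpha^{ij}d_id_jv$, with right-hand sides $-a(x,u,Du)$ and $-\alpha(x,u,Du)$. By \eqref{A4} and \eqref{V4} these grow quadratically in $Du$ and $du$, while by \eqref{V} the normal-derivative dependence of $\alpha$ is only \emph{linear}, with coefficient $\le\eta(M_0)\beta_0$; it therefore plays exactly the role of the $\beta_0\partial_{\mathbf{n}}u$ term handled by the Munchhausen trick in Theorem~\ref{apriori-estimate}. Note that $u\in V_{n,n-1}(\Omega)$ already gives $Du\in L^{2n}(\Omega)$ and $du\in L^{2(n-1)}(\partial\Omega)$ \emph{qualitatively}, via the borderline Sobolev embeddings $W^1_n(\Omega)\hookrightarrow L^{2n}(\Omega)$ and $W^1_{n-1}(\partial\Omega)\hookrightarrow L^{2(n-1)}(\partial\Omega)$ applied to $Du$ and $du$; the whole point is to upgrade this into a \emph{quantitative} bound depending only on the listed structural quantities.

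Next I would set up the homotopy. After adding a large coercive zeroth-order term to $\widehat{\CL}$ and $\widehat{\CB}$ so that the frozen linear problem is uniquely solvable (Theorem~\ref{existence}), I consider for $t\in[0,1]$ the solutions $u_t\in V_{n,n-1}(\Omega)$ of the frozen problem with right-hand sides $-t\,a(x,u,Du_t)$ and $-t\,\alpha(x,u,Du_t)$ (the coercive term moved to the right). A solution at $t=1$ is the given $u$, while $u\equiv0$ solves the problem at $t=0$. For any such $u_t$, the frozen a~priori estimate of Theorem~\ref{apriori-estimate} (in the uniform form of Remark~\ref{rem1}), combined with \eqref{A4}, \eqref{V4} and the embeddings above, yields a closed scalar inequality $X(t)\le A+B\,t\,X(t)^2$ for $X(t):=\|Du_t\|_{2n,\Omega}+\|du_t\|_{2(n-1),\partial\Omega}$, where $A$ gathers $\|\Phi\|_{n,\Omega}$, $\|\Theta\|_{n-1,\partial\Omega}$, $M_0$ and the lower-order moduli, while $B\sim\mu\,\eta(M_0)\,C_1$. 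When $4ABt<1$ the parabola $BtX^2-X+A$ is nonnegative only off an interval $(X_-,X_+)$ with $X_-\simeq A$; a connectedness argument along the solution branch emanating from $(t,u_t)=(0,0)$, where $X=0<X_-$, then shows that $X$ cannot cross this forbidden gap, so $X(t)\le X_-$ throughout and at $t=1$ this is the desired bound.

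The hard part is precisely the requirement $4AB<1$ separating the two branches: since $\mu$ is an arbitrary constant, $B$ cannot be made small globally, and for $4AB\ge1$ the quadratic inequality carries no information. This is where the Hölder bound enters a second time, through the localization that forms the core of the Amann--Crandall adaptation. I would cover $\overline\Omega$ by finitely many small balls $B_r(x^k)$ and run the homotopy for the localized increments $\zeta_k\big(u-u(x^k)\big)$: on each ball $\mathrm{osc}_{B_r}u\le M_\lambda r^\lambda$ is small and the local data norms $\|\Phi\|_{n,B_r}$, $\|\Theta\|_{n-1,\,\cdot}$ tend to $0$ with $r$, so the local constant $A_{\mathrm{loc}}(r)$ can be made as small as needed \emph{uniformly}, i.e. depending only on the structural data and not on $u$ itself, whence $4A_{\mathrm{loc}}B<1$ for $r$ small. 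The continuation then pins each local quantity to the lower branch $\le X_{-,\mathrm{loc}}\simeq A_{\mathrm{loc}}$, and summing over the finite cover produces \eqref{grad-est}. The two delicate points I expect are the bookkeeping of the cut-off commutator terms — which are of strictly lower order and are absorbed using $M_0$, $M_\lambda$ and Theorem~\ref{extension-theorem} on the boundary patches — and checking that the normal-derivative coupling on $\partial\Omega$ passes through the localized Munchhausen trick uniformly in $r$ and $t$.
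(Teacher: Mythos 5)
Your opening move (freezing $a^{ij}(x,u(x))$, $\alpha^{ij}(x,u(x))$ via Proposition~\ref{Holder-est} and \cite[Lemma~2.1]{P95}) matches the paper, but the homotopy you build on it is the wrong one, and the argument has genuine gaps. You scale the whole nonlinearity ($-t\,a(x,u,Du_t)$, $-t\,\alpha(x,u,Du_t)$) and try to close the global quadratic inequality $X(t)\le A+B\,t\,X(t)^2$ by a connectedness argument along the solution branch. This needs (i) existence, uniqueness and continuity in $t$ of the family $u_t$ --- but your problems at each $t$ are still quasilinear with quadratic gradient growth, so Theorem~\ref{existence} (a linear result) does not furnish them, and uniqueness for quadratic-growth Venttsel problems is itself nontrivial (in the paper it is exactly the content of Lemma~\ref{lemma-L-infty}, which relies on a specific coercive structure); and (ii) the smallness $4ABt<1$, which you yourself concede fails since $\mu$ is arbitrary. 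Your localization rescue does not repair (ii): the localized increments $\zeta_k\big(u-u(x^k)\big)$ generate cutoff commutator terms of size $\sim r^{-1}\|Du\|$ on each patch --- both from the interior operator and, worse, from the second-order tangential operator $\alpha^{ij}d_id_j$ on the boundary --- and these land in $A_{\mathrm{loc}}$. Thus $A_{\mathrm{loc}}$ contains the very gradient norm you are estimating, and the claim that it can be made small ``uniformly, depending only on the structural data and not on $u$ itself'' is circular. Smallness of $\mathrm{osc}_{B_r}u$ shrinks neither the quadratic coefficient $B\sim\mu\,\eta(M_0)$ nor these commutators.

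The paper's mechanism is different at exactly this point: the quadratic terms are kept \emph{in the operator} for every value of the parameter, and only the \emph{data} are scaled. After the normalizations \eqref{tilde-a}, \eqref{tilde-alpha}, the problem is recast as \eqref{VP1}--\eqref{VP2} with coercive zeroth-order terms $\Phi u$ and $\Theta u$ (with $\Theta\ge1$), and the family \eqref{VP1-delta}--\eqref{VP2-delta} carries right-hand sides $\delta\tilde f$, $\delta\tilde g$. The comparison principle (Theorem~\ref{global-max-principle}) then yields the increment bound $\|v_{\delta_1}-v_{\delta_2}\|_{\infty,\Omega}\le(\delta_2-\delta_1)\big(M_0+\eta(M_0)\big)$ --- which simultaneously delivers uniqueness --- and the Gagliardo--Nirenberg interpolation $\|Dw\|^2_{2n,\Omega}\le C\big(\|D^2w\|_{n,\Omega}+\|w\|_{\infty,\Omega}\big)\|w\|_{\infty,\Omega}$ converts this $L^\infty$-smallness into smallness of the quadratic difference terms, so that the linear estimate of Theorem~\ref{apriori-estimate} absorbs them once $\delta_2-\delta_1\le\varkappa$ (Lemma~\ref{lemma-grad}). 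Existence at each step comes from Leray--Schauder, with the a~priori bound supplied by the preceding step, and finitely many steps of length $\varkappa$ reach $\delta=1$, where $v_1=u$. No spatial localization and no condition of the type $4AB<1$ are needed: the smallness lives in the homotopy parameter, not in the data. To salvage your draft, replace the $t$-scaling of the nonlinearity by data-scaling with the coercive zeroth-order terms structured exactly as in \eqref{VP1}--\eqref{VP2}, prove the $L^\infty$ increment bound by comparison, and use interpolation rather than a covering argument to beat the quadratic term.
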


\begin{proof}
The \textit{a~priori} estimate \eqref{grad-est} will be derived with the aid of a homotopy technique which goes back to Amann and Crandall \cite{AC78} and has been used in \cite{P95} and \cite{FaP96b} in the study of the Dirichlet and oblique derivative problems for quasilinear elliptic operators with discontinuous coefficients.

For the sake of brevity, we define hereafter
\begin{align}
\tilde{a}^{ij}(x):=&\ {a}^{ij}(x,u(x)),
\label{tilde-aij}\\ 
 \tilde{a}(x):=&\ \dfrac{a(x,u(x),Du(x))}{\mu|Du(x)|^2+b(x)|Du(x)|+\Phi(x)}, \label{tilde-a}\\
\tilde{b}^i(x):=&\ 
b(x)\tilde{a}(x)\, \dfrac{D_iu(x)}{|Du(x)|}
\label{tilde-b}
\end{align}
(recall that we set $\frac{0}{0}=0$, if such an uncertainty occurs)
and note that the assumption \eqref{A1} implies immediately the uniform ellipticity condition \eqref{L2} for $\tilde{a}^{ij}$.

Further, we make use of the hypotheses \eqref{A2}, \eqref{A3} and employ \cite[Lemma~2.1]{P95} in order to get that the \textit{VMO}-moduli of $\tilde{a}^{ij}(x)$  and $\tilde{\alpha}^{ij}(x)$ are controlled in terms of $M_0=\sup_\Omega|u|$ and the modulus of continuity of $u$. Further, Proposition~\ref{Holder-est} provides estimates 
 the continuity modulus of $u$ in terms of $M_0$ and therefore
 $\tilde{a}^{ij}$ satisfy \eqref{L1}. 
Moreover, $|\tilde a|\le {\eta}(M_0)$ in view of \eqref{A4}
while $\tilde b^i$ verify \eqref{L3} as consequence of \eqref{A5}.

This way, the equation \eqref{1.1-AN95} can be rewritten as
\begin{equation}\label{VP1}
-\tilde{a}^{ij}(x)D_iD_ju+\tilde{b}^i(x)D_iu+\mu\tilde{a}(x)|Du|^2+\Phi(x) u
=\tilde{f}(x)
\quad \text{a.e. in}\ \Omega
\end{equation}
with
$$
\tilde{f}(x):= \Phi(x)\big(u(x)-\tilde{a}(x)\big)\in L^n(\Omega).
$$

Similarly, we define
\begin{align}
\label{tilde-alpij}
\tilde{\alpha}^{ij}(x):=&\ {\alpha}^{ij}(x,u(x)), \\ 
\label{tilde-alpha}
 \tilde{\alpha}(x):=&\ \dfrac{\alpha(x,u(x),du(x))}{\mu|du(x)|^2+\beta(x)|du(x)|+\Theta(x)},  \\
\label{t-beta-prime} 
\tilde{\beta}^{*i}(x):=&\ 
\beta(x)\tilde{\alpha}(x)\, \dfrac{d_iu(x)}{|du(x)|}, \\
\label{beta0u}
\tilde\beta_0(x):= &\ \int_0^1 \alpha_{p_i}\big(x,u(x),du(x)+t{\mathbf{n}}\partial_{\mathbf{n}}u(x)\big)\mathbf{n}_i(x)\,dt 
\end{align}
and note that  the assumptions \eqref{V1}--\eqref{V3} and \cite[Lemma~2.1]{P95} imply the conditions \eqref{B1}--\eqref{B2} for $\tilde{\alpha}^{ij}$. Further on, $|\tilde \alpha|\le {\eta}(M_0)$ by \eqref{V4}, $\tilde\beta^{*i}$ satisfy \eqref{B3} because of \eqref{V5}, while \eqref{V} implies
$0\le \tilde\beta_0(x)\le {\eta}(M_0)\beta_0(x)$
for a.a. $x\in\partial\Omega.$ 
Moreover,
$$
\alpha(x,u,Du)=\alpha(x,u,du)+\tilde\beta_0(x)\partial_{\mathbf{n}}u
$$
and the boundary equation \eqref{1.2-AN95} takes on the form
\begin{align}\label{VP2}
-\tilde{\alpha}^{ij}(x)d_id_ju
+& \tilde\beta^{*i}(x)d_iu+\tilde\beta_0(x)\partial_{\mathbf{n}}u\\
\nonumber
+& \mu\tilde{\alpha}(x)|du(x)|^2 +\Theta(x) u=\tilde{g}(x)
\quad \text{a.e. on}\ \partial\Omega,
\end{align}
where $\tilde{g}(x):= \Theta(x) \big(u(x)-\tilde\alpha(x)\big) \in L^{n-1}(\partial\Omega).$ 
Having in mind \eqref{V4} and
without loss of generality we may suppose hereafter that $\Theta\ge 1$.

Consider now 
the one-parameter family of Venttsel problems
\begin{align}
\label{VP1-delta}
& -\tilde{a}^{ij} D_{i}D_jv_\delta + \tilde{b}^i D_iv_\delta + \mu\tilde{a}|Dv_\delta|^2+\Phi v_\delta=\delta\tilde{f} & \text{a.e. in}\ \Omega,\\
\label{VP2-delta}
& -\tilde{\alpha}^{ij}d_{i}d_jv_\delta + \tilde\beta^{*i} d_iv_\delta+\tilde\beta_0\partial_{\mathbf{n}}v_\delta
+ \mu\tilde{\alpha}|dv_\delta|^2+\Theta v_\delta
=\delta\tilde{g} &\text{a.e. on}\ \partial\Omega.
\end{align}

Following the strategy of \cite{AC78}, we will prove unique solvability of  \eqref{VP1-delta}--\eqref{VP2-delta}  in $V_{n,n-1}(\Omega)$
 for each $\delta\in[0,1]$ and will estimate the gradient of $v_{\delta_2}$ in terms of  the gradient of
$v_{\delta_1}$ for small enough $\delta_2-\delta_1>0$. Then we will easily have $v_0\equiv0$, while the coincidence of the problem \eqref{VP1-delta}--\eqref{VP2-delta} with \eqref{VP1}--\eqref{VP2} for $\delta=1$  would give $v_1=u$, and finite iteration in $\delta$ will  give the desired bound \eqref{grad-est}. 

To realize that plan, we need two lemmata.
\begin{lemma}
\label{lemma-L-infty}
Suppose that $v_{\delta_1},v_{\delta_2}\in V_{n,n-1}(\Omega)$ solve \eqref{VP1-delta}--\eqref{VP2-delta} with $\delta_1\le\delta_2$.

 Then
\begin{equation}
\label{36}
\|v_{\delta_1}-v_{\delta_2}\|_{\infty,\Omega}
\le (\delta_2-\delta_1)\big(M_0+{\eta}(M_0)\big).
\end{equation}
\end{lemma}
\begin{proof}
Setting $w=v_{\delta_1}-v_{\delta_2}$ we obtain the following \textit{linear} Venttsel problem
\begin{align*}
&\tilde{\mathcal{L}}w:= -\tilde{a}^{ij}D_iD_jw + (\tilde{a}^i+\tilde{b}^i)D_iw+\Phi w= (\delta_1-\delta_2)\tilde{f} &\text{a.e. in}\ \Omega,\\
&\tilde{\mathcal{B}}w:= -\tilde{\alpha}^{ij}d_id_jw +
(\tilde{\alpha}^i+\tilde\beta^{*i})d_iw +\tilde\beta_0\partial_{\mathbf{n}}w+\Theta w =(\delta_1-\delta_2)\tilde{g} &\text{a.e. on}\ \partial\Omega,
\end{align*}
with 
\begin{align*}
\tilde{a}^i(x):=&\ 2\mu\tilde{a}(x)\int_0^1\big(tD_iw(x)+D_iv_{\delta_2}(x)\big)dt,\\
\tilde{\alpha}^i(x):=&\ 2\mu\tilde{\alpha}(x)\int_0^1\big(td_iw(x)+d_iv_{\delta_2}(x)\big)dt.
\end{align*}
We recall that $|\tilde{a}|\le{\eta}(M_0)$, $w,v_{\delta_2}\in W^2_n(\Omega)$ and therefore $\tilde{a}^i\in L^n(\Omega)$. Similarly, $\tilde{\alpha}^i\in L^{n-1}(\partial\Omega)$.

Using $\tilde{f}(x)\ge -\Phi(x)\big(M_0+{\eta}(M_0)\big)$ we get
$$
\tilde{\mathcal{L}}w\le 
\tilde{\mathcal{L}}\big((\delta_2-\delta_1)(M_0+{\eta}(M_0))\big)
\qquad \text{a.e. in}\ \Omega
$$
and similarly
$$
\tilde{\mathcal{B}}w\le 
\tilde{\mathcal{B}}\big((\delta_2-\delta_1)(M_0+{\eta}(M_0))\big)\qquad \text{a.e. on}\ \partial\Omega.
$$
It follows from Theorem~\ref{global-max-principle} that
$$
w(x)\le (\delta_2-\delta_1)\big(M_0+{\eta}(M_0)\big)\qquad \text{in}\ \Omega.
$$
In the same manner the lower estimate $w(x)\ge -(\delta_2-\delta_1)\big(M_0+{\eta}(M_0)\big)$ follows and this gives the claim \eqref{36}.
\end{proof}

It is worth noting that setting $\delta_1=\delta_2$ in \eqref{36}, we get immediately $v_{\delta_1}\equiv v_{\delta_2}$ and thus uniqueness of solutions to \eqref{VP1-delta}--\eqref{VP2-delta}. 
Precisely,
\begin{cor}
\label{cor-uniqueness}
The problem \eqref{VP1-delta}--\eqref{VP2-delta} cannot have more than one solution in $V_{n,n-1}(\Omega)$ for any $\delta\in[0,1]$.
\end{cor}

\begin{lemma}\label{lemma-grad}
Under the hypotheses of Lemma $\ref{lemma-L-infty}$, there is a $\varkappa>0$ such that the inequality $\delta_2-\delta_1\le\varkappa$ implies
\begin{align}
\label{39}
\|Dv_{\delta_2}-Dv_{\delta_1}\|_{2n,\Omega} 
+& \|dv_{\delta_2}-dv_{\delta_1}\|_{2(n-1),\partial\Omega}\\ 
&\le\ C_2 (\delta_2-\delta_1) \Big(1+\|Dv_{\delta_1}\|_{2n,\Omega} + \|dv_{\delta_1}\|_{2(n-1),\partial\Omega}\Big).
\nonumber
\end{align}
The constants $\varkappa$ and $C_2$ depend on the same quantities as $M_1$ in the statement of Theorem~$\ref{Du-est}.$
\end{lemma}
\begin{proof} We rewrite the problem for
$w=v_{\delta_1}-v_{\delta_2}$ as follows:
\begin{align}
\label{hat-L}
\widehat{\mathcal{L}}w:=&\
-\tilde{a}^{ij}D_iD_jw + \tilde{b}^iD_iw+\Phi w= \widehat{f} &\text{a.e. in}\ \Omega,\\
\label{hat-B}
\widehat{\mathcal{B}}w:=&\
 -\tilde{\alpha}^{ij}d_id_jw +
\tilde\beta^{*i}d_iw +\tilde\beta_0\partial_{\mathbf{n}}w+\Theta w =\widehat{g} &\text{a.e. on}\ \partial\Omega,
\end{align}
with 
\begin{align*}
\widehat f= &\ (\delta_1-\delta_2)\tilde{f}-\mu
\tilde{a}\big(|Dv_{\delta_1}|^2-|Dv_{\delta_2}|^2\big)\in L^n(\Omega),\\
\widehat g = &\ 
(\delta_1-\delta_2)\tilde{g}-\mu
\tilde{\alpha}\big(|dv_{\delta_1}|^2-|dv_{\delta_2}|^2\big)\in L^{n-1}(\partial\Omega).
\end{align*}
Theorem~\ref{apriori-estimate} and Lemma \ref{lemma-L-infty} yield
\begin{equation}
\label{37}
\|w\|_{V_{n,n-1}(\Omega)}\le N_9 \left( 
\big\|\widehat{f}\big\|_{n,\Omega}+
\big\|\widehat{g}\big\|_{n-1,\partial\Omega}+(\delta_2-\delta_1)\big(M_0+{\eta}(M_0)\big)\right),
\end{equation}
where $N_9$ depends only on $n$, $\nu$, $\mathrm{diam}\,\Omega$, the properties of $\partial\Omega$, $M_0$, the norms $\|\Phi\|_{n,\Omega}$ and $\|\Theta\|_{n-1,\partial\Omega}$, the moduli of continuity of the functions $b$, $\beta_0$ and $\beta$ in the corresponding Orlicz spaces defined by conditions \eqref{A5}, \eqref{V0} and \eqref{V5}, respectively, and on the \textit{VMO}-moduli of the coefficients $\tilde{a}^{ij}(x)$  and $\tilde{\alpha}^{ij}(x)$. 

However, as explained before, the 
 \textit{VMO}-moduli of $\tilde{a}^{ij}(x)$  and $\tilde{\alpha}^{ij}(x)$ are controlled in terms of $M_0$ through \cite[Lemma~2.1]{P95} and Proposition~\ref{Holder-est}. Thus, the constant $N_9$ in \eqref{37} depends only on data listed in the statement of Lemma~\ref{lemma-grad}.

Taking advantage of the bounds
\begin{align*}
\|\tilde{f}\|_{n,\Omega}\le  \|\Phi\|_{n,\Omega}\big(M_0+{\eta}(M_0)\big),\qquad
\|\tilde{g}\|_{n-1,\partial\Omega}\le  \|\Theta\|_{n-1,\partial\Omega}\big(M_0+{\eta}(M_0)\big),
\end{align*}
and of the evident inequalities
\begin{align*}
\left\| |Dv_{\delta_1}|^2-|Dv_{\delta_2}|^2\right\|_{n,\Omega}\le &\ \|Dw\|^2_{2n,\Omega}+2\|Dw\|_{2n,\Omega}\|Dv_{\delta_1}\|_{2n,\Omega},\\
\left\| |dv_{\delta_1}|^2-|dv_{\delta_2}|^2\right\|_{n-1,\partial\Omega}\le &\ \|dw\|^2_{2(n-1),\partial\Omega}+2\|dw\|_{2(n-1),\partial\Omega}\|dv_{\delta_1}\|_{2(n-1),\partial\Omega},
\end{align*}
we rewrite \eqref{37} as follows
\begin{align}
\label{38}
\|w\|_{V_{n,n-1}(\Omega)}\le &\ N_{10} \Big( (\delta_2-\delta_1)+
\|Dw\|^2_{2n,\Omega}+\|dw\|^2_{2(n-1),\partial\Omega}\\
\nonumber
 &+ \|Dw\|_{2n,\Omega}\|Dv_{\delta_1}\|_{2n,\Omega} +\|dw\|_{2(n-1),\partial\Omega}\|dv_{\delta_1}\|_{2(n-1),\partial\Omega}\Big),
\end{align}
where $N_{10}$ depends on the same quantities as $N_9$. 

We infer now the Gagliardo--Nirenberg interpolation inequality \cite[Theorem 15.1]{BIN75}
and the estimate \eqref{36} to get
\begin{align}
\label{Dw}
\|Dw\|^2_{2n,\Omega}\le&\ C(n,\Omega)\big(\|D^2w\|_{n,\Omega}+\|w\|_{\infty,\Omega}\big)\|w\|_{\infty,\Omega}\\
\nonumber
\le &\ C(n,\Omega)(\delta_2-\delta_1)\big(M_0+{\eta}(M_0)\big)\|w\|_{V_{n,n-1}(\Omega)},
\end{align}
and similarly
\begin{equation}\label{dw}
\|dw\|^2_{2(n-1),\partial\Omega}\le
C(n,\Omega)(\delta_2-\delta_1)\big(M_0+{\eta}(M_0)\big)\|w\|_{V_{n,n-1}(\Omega)}.
\end{equation}
We substitute these inequalities into \eqref{38} and estimate the last two terms by the Cauchy inequality. This gives
\begin{align*}
\|w\|_{V_{n,n-1}(\Omega)}\le &\ N_{11} \Big( (\delta_2-\delta_1)+(\delta_2-\delta_1+\varkappa)\|w\|_{V_{n,n-1}(\Omega)}\\
&+ \frac{\delta_2-\delta_1}{\varkappa}
\big(\|Dv_{\delta_1}\|^2_{2n,\Omega}+\|dv_{\delta_1}\|^2_{2(n-1),\partial\Omega}\big)\Big)
\end{align*}
with arbitrary $\varkappa\in(0,1)$ and where
$N_{11}$ depends on the same quantities as $N_9$. 
Choosing $\varkappa=\frac 1{4N_{11}}$ we obtain \eqref{39} for $\delta_2-\delta_1\le\varkappa$, in view of \eqref{Dw} and \eqref{dw}.
\end{proof}

\medskip

Turning back to
the proof of Theorem \ref{Du-est}, we fix $\delta_1=0$ and $\delta_2=\varkappa$ in \eqref{39} and remember that $v_0\equiv0$ by Corollary~\ref{cor-uniqueness}. This gives the \textit{a~priori} estimate
\begin{equation}
\label{40}
\|Dv_{\varkappa}\|_{2n,\Omega}+\|dv_{\varkappa}\|_{2(n-1),\partial\Omega} \le C_2\varkappa.
\end{equation}

The solvability of \eqref{VP1-delta}--\eqref{VP2-delta} with $\delta=\varkappa$ is a consequence of the Leray--Schauder fixed point theorem. Indeed, define the nonlinear operator 
$$\mathcal{F}: \ W^1_{2n}(\Omega)\cap W^1_{2(n-1)}(\partial\Omega) \mapsto V_{n,n-1}(\Omega)
$$ 
which associates to any $w\in W^1_{2n}(\Omega)\cap W^1_{2(n-1)}(\partial\Omega)$ the unique solution $v=\mathcal{F}(w)$ of the \textit{linear} Venttsel problem
$$
\begin{cases}
\widehat{\mathcal{L}}v=  \varkappa \tilde{f}(x)-
\tilde{a}(x)|Dw|^2 &\text{a.e. in}\ \Omega, \\[4pt]
\widehat{\mathcal{B}}v
 =\varkappa\tilde{g}(x)-
\tilde{\alpha}(x)|dw|^2 &\text{a.e. on}\ \partial\Omega,
\end{cases}
$$
with operators $\widehat{\mathcal{L}}$ and $\widehat{\mathcal{B}}$ given by \eqref{hat-L} and \eqref{hat-B}, respectively.

The unique solvability of that problem follows from Theorems~\ref{existence} and \ref{global-max-principle} 
due to assumptions of Theorem \ref{Du-est} and $w\in W^1_{2n}(\Omega)\cap W^1_{2(n-1)}(\partial\Omega)$ (recall that $\Theta\ge 1$). Therefore, the nonlinear operator $\mathcal{F}$ is well defined. Moreover, the problem \eqref{VP1-delta}--\eqref{VP2-delta} with $\delta=\varkappa$ is equivalent to the equation $u=\mathcal{F}(u)$. 

The estimate \eqref{2.3-AN95'} yields the continuity of $\mathcal{F}$, while the compactness of the embedding $V_{n,n-1}(\Omega) \hookrightarrow W^1_{2n}(\Omega)\cap W^1_{2(n-1)}(\partial\Omega)$ guarantees the compactness of $\mathcal{F}$ considered as a mapping from $W^1_{2n}(\Omega)\cap W^1_{2(n-1)}(\partial\Omega)$ into itself. Finally, any solution of the equation $v=\sigma\mathcal{F}(v)$, $0\le\sigma\le1$, that is,
$$
\begin{cases}
\widehat{\mathcal{L}}v=  \sigma\big(\varkappa \tilde{f}(x)-
\tilde{a}(x)|Dv|^2\big) &\text{a.e. in}\ \Omega, \\[4pt]
\widehat{\mathcal{B}}v
 =\sigma\big(\varkappa\tilde{g}(x)-
\tilde{\alpha}(x)|dv|^2\big) &\text{a.e. on}\ \partial\Omega,
\end{cases}
$$
satisfies, by \eqref{40}, the \textit{a~priori} estimate
$$
\|Dv\|_{2n,\Omega}+\|dv\|_{2(n-1),\partial\Omega} \le C_3
$$
with $C_3=C_2\varkappa$ independent of $\sigma$. This suffices to combine the Leray--Schauder theorem (see, e.g., \cite[Theorem 11.6]{GT01}) with Corollary~\ref{cor-uniqueness} in order to get unique solvability of \eqref{VP1-delta}--\eqref{VP2-delta} with $\delta=\varkappa$.

To complete the proof of Theorem~\ref{Du-est}, we take  successively $\delta_1=k\varkappa$, $\delta_2=(k+1)\varkappa$, $k\in\mathbb{N}$, and repeat the above procedure. Finitely many iterations of \eqref{39} lead to \eqref{grad-est}
since $v_1$ is nothing else than the solution $u$ of the problem \eqref{VP1}, \eqref{VP2}.
\end{proof}

Based on the \textit{a~priori} gradient estimate derived in Theorem~\ref{Du-est}, we can get solvability of the quasilinear Venttsel problem
\eqref{1.1-AN95}--\eqref{1.2-AN95}
under the hypotheses listed at the beginning of Section~\ref{sec4}.

\begin{thm}
\label{quasilinear-existence}
Let $\partial\Omega \in \C^{1,1}$ and let the functions involved in \eqref{1.1-AN95}--\eqref{1.2-AN95} satisfy 
the conditions \eqref{A1}--\eqref{A5}, \eqref{V}, \eqref{V0}--\eqref{V5}.

If any solution to the one-parameter family of Venttsel problems
\begin{align}
\label{VP1-sigma}
& -a^{ij}(x,u)D_{i}D_ju+ \sigma a(x,u,Du)=0 & \text{a.e. in}\ \Omega,\\
\label{VP2-sigma}
& -\alpha^{ij}(x,u)d_{i}d_ju+(1-\sigma) u +
\sigma\alpha(x,u,Du)
=0 & \text{a.e. on}\ \partial\Omega
\end{align}
satisfies the \textit{a~priori} estimate
\begin{equation}\label{L-infty}
    \sup_{\Omega} |u|\le M_0
\end{equation}
with $M_0$ independent of $u$ and $\sigma\in[0,1]$, then the quasilinear Venttsel problem \eqref{1.1-AN95}--\eqref{1.2-AN95} is solvable in the space $V_{n,n-1}(\Omega)$.
\end{thm}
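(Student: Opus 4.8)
The plan is to realize \eqref{1.1-AN95}--\eqref{1.2-AN95} as the endpoint $\sigma=1$ of the homotopy \eqref{VP1-sigma}--\eqref{VP2-sigma} by means of the Leray--Schauder fixed point theorem \cite[Theorem~11.6]{GT01}, following the scheme of \cite{LU86}. I would work in the Banach space
$$
X:=\Big\{w\in W^1_{2n}(\Omega)\colon\ Dw\big|_{\partial\Omega}\in L^{2(n-1)}(\partial\Omega)\Big\},\qquad
\|w\|_X:=\|w\|_{W^1_{2n}(\Omega)}+\|Dw\|_{2(n-1),\partial\Omega},
$$
which controls the \emph{full} first-order trace of $w$ (hence both $dw$ and $\partial_{\mathbf{n}}w$ on $\partial\Omega$), embeds into $\C^{0,1/2}(\overline{\Omega})$ by Morrey's theorem, and receives $V_{n,n-1}(\Omega)$ compactly: indeed $Du\in W^1_n(\Omega)\hookrightarrow\hookrightarrow L^{2n}(\Omega)$ and, by the trace theorem, $Du\big|_{\partial\Omega}\in W^{1-1/n}_n(\partial\Omega)\hookrightarrow\hookrightarrow L^{2(n-1)}(\partial\Omega)$, both borderline Rellich embeddings in dimension $n\ge3$.

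Given $w\in X$ and $\sigma\in[0,1]$, I would freeze the $(z,p)$-dependence of all the coefficients at $w$, defining $\tilde a^{ij},\tilde\alpha^{ij},\tilde a,\tilde b^i,\tilde\alpha,\tilde\beta^{*i},\tilde\beta_0$ exactly as in \eqref{tilde-aij}--\eqref{beta0u} but with $w$ in place of $u$. By \cite[Lemma~2.1]{P95} the \textit{VMO}-moduli of $\tilde a^{ij},\tilde\alpha^{ij}$ are controlled by $\sup_\Omega|w|$ and the modulus of continuity of $w$ (finite since $X\hookrightarrow\C^{0,1/2}$), while \eqref{A1}, \eqref{V}, \eqref{V1} give \eqref{L1}--\eqref{L2}, \eqref{B1}--\eqref{B2} and $0\le\tilde\beta_0\le\eta(\sup_\Omega|w|)\,\beta_0$. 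Repeating the passage from \eqref{1.1-AN95}--\eqref{1.2-AN95} to \eqref{VP1}--\eqref{VP2}, but adding and subtracting the zeroth-order terms $\sigma\Phi v$ and $\sigma\Theta v$, I would set $\mathcal{T}(w,\sigma):=v$, the solution of the \emph{linear} Venttsel problem
\begin{align*}
-\tilde a^{ij}D_iD_jv+\sigma\tilde b^iD_iv+\sigma\Phi v
&=\sigma\Phi\,(w-\tilde a)-\sigma\mu\tilde a\,|Dw|^2 &&\text{a.e. in}\ \Omega,\\
-\tilde\alpha^{ij}d_id_jv+\sigma\tilde\beta^{*i}d_iv+\sigma\tilde\beta_0\partial_{\mathbf{n}}v+\big[(1-\sigma)+\sigma\Theta\big]v
&=\sigma\Theta\,(w-\tilde\alpha)-\sigma\mu\tilde\alpha\,|dw|^2 &&\text{a.e. on}\ \partial\Omega.
\end{align*}
Here the interior zeroth-order coefficient $\sigma\Phi$ is nonnegative, the boundary one equals $(1-\sigma)+\sigma\Theta\ge1$ since $\Theta\ge1$, and $\sigma\tilde\beta_0\ge0$ is an exterior field; the integrability of $\tilde b^i,\Phi,\tilde\beta^{*i},\Theta,\tilde\beta_0$ follows from \eqref{A5}, \eqref{V0}, \eqref{V5}. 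Hence Theorems~\ref{global-max-principle} and \ref{existence} make $v=\mathcal{T}(w,\sigma)$ uniquely defined. At $\sigma=0$ the problem reduces to $-\tilde a^{ij}D_iD_jv=0$ in $\Omega$, $-\tilde\alpha^{ij}d_id_jv+v=0$ on $\partial\Omega$, so $\mathcal{T}(\cdot,0)\equiv0$; and a direct computation confirms that a fixed point $v=\mathcal{T}(v,\sigma)$ is exactly a solution of \eqref{VP1-sigma}--\eqref{VP2-sigma}.

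The uniform a priori bound required by the continuation argument is supplied by the hypotheses. Every member of \eqref{VP1-sigma}--\eqref{VP2-sigma} has the form \eqref{1.1-AN95}--\eqref{1.2-AN95} with $a$ replaced by $\sigma a$ and $\alpha$ by $(1-\sigma)z+\sigma\alpha$; since $\sigma\le1$ and $|z|\le M_0$ along solutions, these satisfy \eqref{A1}--\eqref{A5}, \eqref{V}, \eqref{V0}--\eqref{V5} with $\sigma$-independent data, the bounded term $(1-\sigma)z$ being absorbed into $\Theta\ge1$. Consequently \eqref{L-infty}, Proposition~\ref{Holder-est} and Theorem~\ref{Du-est} give, for every fixed point $v$ and uniformly in $\sigma\in[0,1]$,
$$
\sup_\Omega|v|\le M_0,\qquad \|v\|_{\C^{0,\lambda}(\overline{\Omega})}\le M_\lambda,\qquad
\|Dv\|_{2n,\Omega}+\|dv\|_{2(n-1),\partial\Omega}\le M_1,
$$
whence $\|v\|_X$ stays below a constant independent of $\sigma$.

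The hard part will be the continuity and compactness of $\mathcal{T}$. For compactness I would note that, over $X$-bounded sets, the data $\sigma\Phi(w-\tilde a)-\sigma\mu\tilde a|Dw|^2$ and $\sigma\Theta(w-\tilde\alpha)-\sigma\mu\tilde\alpha|dw|^2$ stay bounded in $L^n(\Omega)$ and $L^{n-1}(\partial\Omega)$; a quantitative Aleksandrov--Bakel'man bound (as in the proof of Theorem~\ref{global-max-principle}) then controls $\sup_\Omega|v|$ by these data, and together with the coercive estimate of Remark~\ref{rem1} this keeps $\mathcal{T}$-images bounded in $V_{n,n-1}(\Omega)\hookrightarrow\hookrightarrow X$. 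The subtle point is the continuous dependence of the frozen data on $w$: if $w_k\to w$ in $X$ and $\sigma_k\to\sigma$, then $w_k\to w$ uniformly, so $\tilde a^{ij},\tilde\alpha^{ij}$ converge in $L^\infty$ by \eqref{A3}, \eqref{V3}, while $|Dw_k|^2\to|Dw|^2$ in $L^n(\Omega)$ and $|dw_k|^2\to|dw|^2$ in $L^{n-1}(\partial\Omega)$. The genuinely delicate ingredient is the coefficient $\tilde\beta_0$ of \eqref{beta0u}, as it is built from the normal trace $\partial_{\mathbf{n}}w$: it is precisely to control its convergence that $X$ was chosen to dominate $\partial_{\mathbf{n}}w$ in $L^{2(n-1)}(\partial\Omega)$, so that along a subsequence $\partial_{\mathbf{n}}w_k\to\partial_{\mathbf{n}}w$ a.e.\ and, by the uniform bound $\tilde\beta_0\le\eta(R)\beta_0$ with $R:=\sup_k\|w_k\|_\infty$ and dominated convergence, $\tilde\beta_0(w_k)\to\tilde\beta_0(w)$ in the relevant Orlicz space. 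Passing to the limit in the linear problems for the differences $\mathcal{T}(w_k,\sigma_k)-\mathcal{T}(w,\sigma)$ and invoking Theorem~\ref{apriori-estimate} once more then yields $\mathcal{T}(w_k,\sigma_k)\to\mathcal{T}(w,\sigma)$. With all hypotheses of \cite[Theorem~11.6]{GT01} in force, $\mathcal{T}(\cdot,1)$ possesses a fixed point, which is the desired solution of \eqref{1.1-AN95}--\eqref{1.2-AN95} in $V_{n,n-1}(\Omega)$.
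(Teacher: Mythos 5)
Your overall route is the paper's own: a Leray--Schauder continuation performed in a first-order Banach space that controls the gradient's boundary trace, with frozen-coefficient linear Venttsel problems rendered uniquely solvable by Theorems~\ref{existence} and \ref{global-max-principle}, and the homotopy arranged so that fixed points of $\mathcal{T}(\cdot,\sigma)$ are exactly the solutions of \eqref{VP1-sigma}--\eqref{VP2-sigma}; keeping $\sigma\Phi v$ and $\big[(1-\sigma)+\sigma\Theta\big]v$ inside the linear operator instead of shifting them to the right-hand side (as the paper does in \eqref{bb-L}--\eqref{bb-B}) is an equivalent repackaging, and your $X$ plays the role of the paper's $U_{n,n-1}(\Omega)$. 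However, the \textit{a~priori} estimate step has a genuine gap. Your norm $\|\cdot\|_X$ contains $\|Dv\|_{2(n-1),\partial\Omega}$, hence the normal component $\|\partial_{\mathbf{n}}v\|_{2(n-1),\partial\Omega}$, and none of the three bounds you list controls it: \eqref{L-infty} and Proposition~\ref{Holder-est} bound $v$ itself, while Theorem~\ref{Du-est} bounds only $\|Dv\|_{2n,\Omega}$ and the \emph{tangential} trace $\|dv\|_{2(n-1),\partial\Omega}$. So the step ``whence $\|v\|_X$ stays below a constant independent of $\sigma$'' does not follow as written. The repair is precisely the paper's closing maneuver, which you omit: once \eqref{grad-est} is available, regard the fixed point as a solution of the frozen linear problem whose right-hand sides are then bounded in $L^n(\Omega)$ and $L^{n-1}(\partial\Omega)$ uniformly in $\sigma$, apply the lower-order-free coercive estimate \eqref{2.3-AN95'} (with a constant uniform over the frozen operators, controlled via the H\"older bound and \cite[Lemma~2.1]{P95}) to obtain $\|v\|_{V_{n,n-1}(\Omega)}\le C$, and only then deduce the normal-trace bound from $\partial_{\mathbf{n}}v\in W^1_n(\Omega)\subset L^r(\partial\Omega)$ for all $r<\infty$. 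Note that your choice of a space stronger than the paper's ($\partial_{\mathbf{n}}v$ in $L^{2(n-1)}(\partial\Omega)$ rather than $L^1(\partial\Omega)$) makes this extra step mandatory; it costs nothing once the $V_{n,n-1}$ bound is in hand, but it cannot be skipped.

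A second, smaller compression occurs in the continuity argument. Applied to the differences $\mathcal{T}(w_k,\sigma_k)-\mathcal{T}(w,\sigma)$, estimate \eqref{2.3-AN95} retains the lower-order terms $\|\cdot\|_{n,\Omega}+\|\cdot\|_{n-1,\partial\Omega}$ and its constant depends on the coefficients frozen at $w_k$, which vary with $k$; a one-shot invocation of Theorem~\ref{apriori-estimate} therefore does not yield convergence. What is needed is exactly the content of Remark~\ref{rem1} and Lemma~\ref{dropped-est-h}: a coercive estimate \emph{without} lower-order terms whose constant is uniform over the whole sequence of frozen operators, established by a compactness/contradiction argument in which the limit is identified via the maximum principle of Theorem~\ref{global-max-principle}. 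Your Orlicz-space dominated-convergence treatment of $\tilde\beta_0$ and $\tilde\beta^{*i}$ is the paper's own and is the right delicate point to single out, so this second gap is one of bookkeeping rather than of idea; still, as written, neither the uniformity in $k$ nor the removal of the lower-order terms is justified.
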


\begin{proof}
We again proceed by using the Leray--Schauder theorem. Introduce the linear space
$$
U_{n,n-1}(\Omega):=\Big\{v\in W^1_{2n}(\Omega)\cap W^1_{2(n-1)}(\partial\Omega)\colon\quad \partial_{\mathbf{n}}v\in L^{1}(\partial\Omega)\Big\},
$$
equipped with the natural norm
$$
\|v\|_{U_{n,n-1}(\Omega)}=
\|v\|_{W^1_{2n}(\Omega)}+
\|v\|_{W^1_{2(n-1)}(\partial\Omega)}+
\|\partial_{\mathbf{n}}v\|_{L^{1}(\partial\Omega)}.
$$
and define the nonlinear operator 
$$\mathcal{F}_1\colon\ \ U_{n,n-1}(\Omega) \mapsto V_{n,n-1}(\Omega)
$$ 
which associates to any  $u\in U_{n,n-1}(\Omega)$ the solution $v=\mathcal{F}_1(u)$ of the \textit{linear} Venttsel problem
\begin{align}
\label{bb-L}
{\mathbb L} v:=&\
-\tilde{a}^{ij}D_iD_jv + \tilde{b}^iD_iv = -\tilde{a}(\mu|Du|^2+\Phi) &\text{a.e. in}\ \Omega,\\
\label{bb-B}
{\mathbb B}v:=&\
 -\tilde{\alpha}^{ij}d_id_jv +
\tilde\beta^{*i}d_iv + \tilde\beta_0\partial_{\mathbf{n}}v+v\\ 
\nonumber
& \qquad\qquad\qquad\qquad = -\tilde\alpha(\mu|du|^2+\Theta) +u &\text{a.e. on}\ \partial\Omega,
\end{align}
where $\tilde{a}^{ij}$, $\tilde{a}$ and $\tilde{b}^i$ are defined by  \eqref{tilde-aij}--\eqref{tilde-b}, while $\tilde{\alpha}^{ij}$, $\tilde{\alpha}$, $\tilde\beta^{*i}$ and $\tilde\beta_0$ are given by \eqref{tilde-alpij}--\eqref{beta0u}, respectively.

By the Morrey embedding theorem, see \cite[Theorem 7.17]{GT01}, we have $u\in \mathcal{C}^{0,1/2}(\overline{\Omega})$.  
Similarly to the proof of Theorem \ref{Du-est}, we establish that the problem \eqref{bb-L}--\eqref{bb-B} satisfies all the hypotheses of Theorems~\ref{apriori-estimate} and \ref{global-max-principle}.  On the base of
Theorems \ref{existence} and \ref{global-max-principle} we conclude that the problem \eqref{bb-L}--\eqref{bb-B} is uniquely solvable in $V_{n,n-1}(\Omega)$ and therefore the nonlinear operator $\mathcal{F}_1$ is well defined. Moreover, it is easy to see that the original quasilinear Venttsel problem \eqref{1.1-AN95}--\eqref{1.2-AN95} is equivalent to the equation $u=\mathcal{F}_1(u)$. 

Since $u\in V_{n,n-1}(\Omega)$ implies $\partial_\mathbf{n}u\in W^1_{n}(\Omega)
\subset L^r(\partial\Omega)$ for all $r<\infty$, see, e.g., \cite[Sec.~10.5]{BIN75}, the space $V_{n,n-1}(\Omega)$ is embedded into $U_{n,n-1}(\Omega)$ and the embedding
is compact. This guarantees the compactness of $\mathcal{F}_1$ considered as a mapping from $U_{n,n-1}(\Omega)$ into itself.

To prove the continuity of $\mathcal{F}_1$ in the space $U_{n,n-1}(\Omega)$,
we consider a sequence $u_h\in U_{n,n-1}(\Omega)$ such that $u_h\to u$ in $U_{n,n-1}(\Omega)$ as $h\to \infty$, and set $v_h=\mathcal{F}_1(u_h)$, $v=\mathcal{F}_1(u)$. Thus, $v$ is the solution of \eqref{bb-L}--\eqref{bb-B} while $v_h$ solves the problem
\begin{align}
\label{bb-Lh}
{\mathbb L}_h v_h:=&\ -\tilde{a}_h^{ij}D_iD_jv_h + \tilde{b}_h^iD_iv_h = -\tilde{a}_h(\mu|Du_h|^2+\Phi) &\text{a.e. in}\ \Omega,\\
\label{bb-Bh}
{\mathbb B}_h v_h:=&\ -\tilde{\alpha}_h^{ij}d_id_jv_h +
\tilde\beta^{*i}_hd_iv_h + \tilde\beta_{0,h}\partial_{\mathbf{n}}v_h+v_h\\ 
\nonumber
& \qquad\qquad\qquad\qquad = -\tilde\alpha_h(\mu|du_h|^2+\Theta) +u_h &\text{a.e. on}\ \partial\Omega,
\end{align}
where $\tilde{a}_h^{ij}$, $\tilde{a}_h$, $\tilde{b}_h^i$,  $\tilde{\alpha}_h^{ij}$, $\tilde{\alpha}_h$, $\tilde\beta^{*i}_h$ and $\tilde\beta_{0,h}$ are defined similarly to \eqref{tilde-aij}--\eqref{tilde-b} and \eqref{tilde-alpij}--\eqref{beta0u} with $u$ replaced by $u_h$.

Notice that $\|u_h\|_{\mathcal{C}^{0,1/2}(\overline{\Omega})}$ is uniformly bounded by the Morrey theorem. So, by
\cite[Lemma~2.1]{P95} the 
 \textit{VMO}-moduli of $\tilde{a}_h^{ij}(x)$  and $\tilde{\alpha}_h^{ij}(x)$ are uniformly bounded. Further, by definition we have
 $$
|\tilde{b}_h^i(x)|\le {\eta}(M)b(x),\quad 
|\tilde\beta^{*i}_h(x)|\le {\eta}(M)\beta(x),\quad
0\le\tilde\beta_{0,h}(x)\le{\eta}(M)\beta_0(x),
$$
where $M=\sup_h\|u_h\|_{\mathcal{C}(\overline{\Omega})}$ while
the functions $b$, $\beta$ and $\beta_0$ verify the assumptions \eqref{L3}, \eqref{B3} and \eqref{B5}, respectively. It follows from 
Remark~\ref{rem1} that any solution of the problem
\begin{equation} \label{thick-h-problem}
{\mathbb L}_h w_h={\mathbb F}_h \quad \text{a.e. in}\ \Omega,\qquad
  {\mathbb B}_h w_h={\mathbb G}_h \quad \text{a.e. on}\ \partial\Omega
\end{equation}
satisfies the  estimate 
\begin{equation} 
\label{2.3h-AN95}
\|w_h\|_{V_{n,n-1}(\Omega)} \le C_4 \Big( 
\|{\mathbb F}_h\|_{n,\Omega}+\|{\mathbb G}_h\|_{n-1,\partial\Omega}+
\|w_h\|_{n,\Omega}+\|w_h\|_{n-1,\partial\Omega}\Big),
\end{equation}
where the constant $C_4$ is independent of $h$.

As in Corollary~\ref{corollary},  the lower-order terms can be dropped from the right-hand side of \eqref{2.3h-AN95}.
\begin{lemma}
\label{dropped-est-h}
For any solution of the problem \eqref{thick-h-problem},
the following estimate holds
\begin{equation} 
\label{2.3h-AN95'}
\|w_h\|_{V_{n,n-1}(\Omega)} \le \widetilde C_4 \big(\|{\mathbb F}_h\|_{n,\Omega}+\|{\mathbb G}_h\|_{n-1, \partial\Omega} \big)
\end{equation}
with a constant $\widetilde C_4$ independent of $h$ and $w_h$.
\end{lemma}
\begin{proof}
As in the proof of Corollary~\ref{corollary}, we proceed by contradiction. Assume that the statement is false. Then there is a sequence 
$$
{\mathbb F}_h\to0 \quad \text{in} \ L^n(\Omega), \qquad {\mathbb G}_h\to0 \quad \text{in} \ L^{n-1}(\partial\Omega),
$$
such that the corresponding solutions of \eqref{thick-h-problem} are bounded away from zero in $V_{n,n-1}(\Omega)$. Without loss of generality we may suppose that $\|w_h\|_{V_{n,n-1}(\Omega)}=1$. Then, passing if necessary to a subsequence, one has
$$
w_h\rightharpoonup\ w \quad \text{in} \ V_{n,n-1}(\Omega), \quad\ 
w_h\to w \quad \text{in} \ L^n(\Omega), \quad\  w_h\to w \quad \text{in} \ L^{n-1}(\partial\Omega).
$$
Since the difference $w_{h,k}:=w_h-w_k$ solves the problem
\begin{align*}
{\mathbb L}_h w_{h,k}=
\big(\tilde{a}_h^{ij}-\tilde{a}_k^{ij}\big)D_iD_jw_k 
 &\ + \big(\tilde{b}_k^i-\tilde{b}_h^i\big)D_iw_k 
+ {\mathbb F}_h - {\mathbb F}_k\quad \text{a.e. in}\ \Omega,
\\
{\mathbb B}_h w_{h,k}= \big(\tilde{\alpha}_h^{ij}-\tilde{\alpha}_k^{ij}\big)
d_id_jw_k  &\ + 
\big(\tilde\beta^{*i}_k-\tilde\beta^{*i}_h\big)d_iw_k \\
 &\ +  \big(\tilde\beta_{0,k}-\tilde\beta_{0,h}\big)\partial_{\mathbf{n}}w_k + {\mathbb G}_h - {\mathbb G}_k\quad \text{a.e. on}\ \partial\Omega,
\end{align*}
the estimate \eqref{2.3h-AN95} yields
\begin{align}
\label{2.3hk-AN95}
&\|w_{h,k}\|_{V_{n,n-1}(\Omega)}\le C_4\Big(\left\|\big(\tilde{a}_h^{ij}-\tilde{a}_k^{ij}\big)D_{i}D_jw_k \right\|_{n,\Omega}
\\
\nonumber
 &\qquad\qquad + \left\|\big(\tilde{\alpha}_h^{ij}-\tilde{\alpha}_k^{ij}\big)
d_id_jw_k\right\|_{n-1,\partial\Omega} +\!
\big\|\big(\tilde{b}_k^i-\tilde{b}_h^i\big)d_iw_k\big\|_{n,\Omega}  \\
\nonumber
 &\qquad\qquad +  \big\|\big(\tilde\beta^{*i}_k-\tilde\beta^{*i}_h\big)d_iw_k\big\|_{n-1,\partial\Omega} +\! \big\|\big(\tilde\beta_{0,k}-\tilde\beta_{0,h}\big)\partial_{\mathbf{n}}w_k\big\|_{n-1,\partial\Omega} \\
\nonumber
 &\qquad\qquad +  \|{\mathbb F}_h - {\mathbb F}_k\|_{n,\Omega} + \|{\mathbb G}_h - {\mathbb G}_k\|_{n-1,\partial\Omega} \\
\nonumber
 &\qquad\qquad +  \|w_{h,k}\|_{n,\Omega} + \|w_{h,k}\|_{n-1,\partial\Omega}\Big).
\end{align}
We know that $\|D_{i}D_jw_k\|_{n,\Omega}$ are bounded, while 
$$
\left\|\tilde{a}_h^{ij}-\tilde{a}_k^{ij}\right\|_{\infty,\Omega}=\left\|a^{ij}(\cdot,u_h)-a^{ij}(\cdot,u_k)\right\|_{\infty,\Omega}\to0
$$
as $h,k\to\infty$, as consequence of $u_h\to u$ in $\mathcal{C}^{0,1/2}(\overline{\Omega})$ and the hypothesis \eqref{A3}. 

Therefore, the first term 
on the right-hand side of \eqref{2.3hk-AN95} tends to $0$ as $h,k\to \infty$. The second term is managed in the same way.

Further on, 
$$
\tilde\beta^{*i}_k-\tilde\beta^{*i}_h=\beta\,\bigg(\dfrac{\alpha(\cdot,u_k,du_k)}{\mu|du_k|^2+\beta|du_k|+\Theta}\, \dfrac{d_iu_k}{|du_k|}-\dfrac{\alpha(\cdot,u_h,du_h)}{\mu|du_h|^2+\beta|du_h|+\Theta}\, \dfrac{d_iu_h}{|du_h|}\bigg)
$$
tends to zero a.e. on $\partial\Omega$ as $h,k\to\infty$. Since $\big|\tilde\beta^{*i}_k(x)-\tilde\beta^{*i}_h(x)\big|\le 2{\eta}(M)\beta(x)$, the hypothesis \eqref{V5} together with the Lebesgue dominated convergence theorem ensure that
$$
\int_{\partial\Omega}
\big|\tilde\beta^{*i}_k(x)-\tilde\beta^{*i}_h(x)\big|^{n-1}
\left(\log{\left(1+\big|\tilde\beta^{*i}_k(x)-\tilde\beta^{*i}_h(x)\big|^{n-1}\right)}\right)^{1-1/(n-1)}dx \to0
$$
as $h,k\to\infty$. This gives
$$
\big\|\big(\tilde\beta^{*i}_k-\tilde\beta^{*i}_h\big)d_iw_k\big\|_{n-1,\partial\Omega}\le 
\big\||\tilde{\boldsymbol{\beta}}{}^*_k-\tilde{\boldsymbol{\beta}}{}^*_h|^{n-1}\big\|_{L_{\Psi^*}(\partial\Omega)}
\big\||dw_k|^{n-1}\big\|_{L_{\Psi}(\partial\Omega)} \to0
$$
as $h,k\to\infty$ (recall that $L_{\Psi^*}(\partial\Omega)$ is the Orlicz space dual to $L_{\Psi}(\partial\Omega)$ introduced in \eqref{Orlicz}).
The third and the fifth terms on the right-hand side of \eqref{2.3hk-AN95} are managed in the same manner. The last four terms evidently tend to zero and we conclude that $w_h\to w$ in $V_{n,n-1}(\Omega)$. In particular, $\|w\|_{V_{n,n-1}(\Omega)}=1$.

Finally, passing to the limit in \eqref{thick-h-problem} yields
$$
{\mathbb L} w=0 \quad \text{a.e. in}\ \Omega,\qquad {\mathbb B}w=0 \quad \text{a.e. on}\ \partial\Omega,
$$
whence $w=0$ by Theorem~\ref{global-max-principle}.   The contradiction obtained completes the proof.
\end{proof}

Turning back to the proof of Theorem \ref{quasilinear-existence},  we have
\begin{equation*}
\|v_h\|_{V_{n,n-1}(\Omega)} \le \widetilde C_4{\eta}(M) \Big( \left\|\mu|Du_h|^2+\Phi\right\|_{n,\Omega}
+\left\|\mu|du_h|^2+\Theta +|u_h|\right\|_{n-1,\partial\Omega} \Big)    
\end{equation*}
as consequence of Lemma~\ref{dropped-est-h} (recall that $|\tilde a_h(x)|\le {\eta}(M)$ by \eqref{A4} and
$|\tilde \alpha_h(x)|\le {\eta}(M)$ by \eqref{V4}).
The  terms on the right-hand side above are uniformly bounded because of the boundedness of $u_h$ in $U_{n,n-1}(\Omega)$.
This means that the sequence $v_h$ is bounded in $V_{n,n-1}(\Omega)$. 

The difference $v-v_h$ solves the problem
\begin{align*}
{\mathbb L} (v-v_h)=&\ 
\big(\tilde{a}^{ij}-\tilde{a}_h^{ij}\big)D_iD_jv_h 
+ \big(\tilde{b}_h^i-\tilde{b}^i\big)D_iv_h \\
&\quad +  \tilde{a}_h\big(\mu|Du_h|^2+\Phi\big) - \tilde{a}\big(\mu|Du|^2+\Phi\big)\quad \text{a.e. in}\ \Omega,
\\
{\mathbb B} (v-v_h)=&\ \big(\tilde{\alpha}^{ij}-\tilde{\alpha}_h^{ij}\big)
d_id_jv_h +
\big(\tilde\beta^{*i}_h-\tilde\beta^{*i}\big)d_iv_h \\
&\quad +   \big(\tilde\beta_{0,h}-\tilde\beta_0\big)\partial_{\mathbf{n}}v_h  + (u - u_h)\\ 
&\quad +  \tilde\alpha_h\big(\mu|du_h|^2+\Theta\big) - \tilde\alpha\big(\mu|du|^2+\Theta\big)
\quad \text{a.e. on}\ \partial\Omega
\end{align*}
and the estimate \eqref{2.3-AN95'} yields
\begin{align*}
\|v-v_h\|_{V_{n,n-1}(\Omega)} \le&\  \widetilde C_1\Big(\left\|\big(\tilde{a}^{ij}-\tilde{a}_h^{ij}\big)D_{i}D_jv_h  \right\|_{n,\Omega} +
\big\|\big(\tilde{b}_h^i-\tilde{b}^i\big)d_iv_h\big\|_{n,\Omega}
\\
 &+  \left\|\big(\tilde{\alpha}^{ij}-\tilde{\alpha}_h^{ij}\big)
d_id_jv_h\right\|_{n-1,\partial\Omega} + \big\|\big(\tilde\beta^{*i}_h-\tilde\beta^{*i}\big)d_iv_h\big\|_{n-1,\partial\Omega}  \\
 &+  \big\|\big(\tilde\beta_{0,h}-\tilde\beta_0\big)\partial_{\mathbf{n}}v_h\big\|_{n-1,\partial\Omega} + \|u-u_h\|_{n-1,\partial\Omega}\\
 &+ \mu\left\|\tilde{a}_h|Du_h|^2\! -\! \tilde{a}|Du|^2\right\|_{n,\Omega} + \mu\left\|\tilde\alpha_h|du_h|^2\! -\! \tilde\alpha|du|^2\right\|_{n-1,\partial\Omega}\\
 &+  \left\|\big(\tilde{a}_h-\tilde{a}\big)\Phi\right\|_{n,\Omega} +\left\|\big(\tilde\alpha_h-\tilde\alpha\big)\Theta\right\|_{n-1,\partial\Omega}\Big) .
\end{align*}
We estimate the first five terms on the right-hand side  above  in the same way as done for the corresponding terms in \eqref{2.3hk-AN95}. Further on,
\begin{align*}
\left\|\tilde{a}_h|Du_h|^2 - \tilde{a}|Du|^2\right\|_{n,\Omega}\le &\ 
\|\tilde{a}_h\|_{\infty,\Omega}\left\||Du_h|^2 - |Du|^2\right\|_{n,\Omega}\\
&\ + \left\|\big(\tilde{a}_h- \tilde{a}\big) |Du|^2\right\|_{n,\Omega}.
\end{align*}
The first term tends to zero since $u_h\to u$ in $U_{n,n-1}(\Omega)$, while the second one is infinitesimal by the Lebesgue theorem. All the remaining terms are estimated in a similar way and we obtain finally $v_h\to v$ in $V_{n,n-1}(\Omega)$  that proves the continuity of the operator $\mathcal{F}_1$.

\medskip

In order to apply the Leray--Schauder theorem and to get existence of a fixed point of $\mathcal{F}_1$, we present a family of continuous, compact nonlinear operators $\mathcal{T}(\cdot, \sigma)$ continuously depending on the parameter $\sigma\in[0,1]$ such that 
$$
\mathcal{T}(u, 0)\equiv0;\qquad \mathcal{T}(u, 1)=\mathcal{F}_1(u), \qquad u\in U_{n,n-1}(\Omega).
$$
Namely, given a $\sigma\in[0,1]$, the operator $\mathcal{T}(\cdot, \sigma)$
associates to any  $u\in U_{n,n-1}(\Omega)$ the unique solution $v_\sigma$ of the linear Venttsel problem
\begin{align*}
&-\tilde{a}^{ij}D_iD_jv_\sigma + \sigma\tilde{b}^iD_iv_\sigma = -\sigma\tilde{a}\big(\mu|Du|^2+\Phi\big)
&\text{a.e. in}\ \Omega,\\
&-\tilde{\alpha}^{ij}d_id_jv_\sigma +
\sigma\tilde\beta^{*i}d_iv_\sigma +  \sigma\tilde\beta_0\partial_{\mathbf{n}}v_\sigma+v_\sigma\\ 
&\qquad\qquad\qquad\qquad\qquad\ = -\sigma\tilde\alpha\big(\mu|du|^2+\Theta\big) +\sigma u
&\text{a.e. on}\ \partial\Omega.
\end{align*}

All mentioned properties of the family $\mathcal{T}(\cdot, \sigma)$ follow from the previous arguments, and to apply the Leray--Schauder theorem it remains only to derive the \textit{a~priori} estimate
\begin{equation}
\label{AA}
\|u\|_{U_{n,n-1}(\Omega)} \le C_5
\end{equation}
 for any solution of the equation $u =\mathcal{T}(u, \sigma)$ in $U_{n,n-1}(\Omega)$, with a constant $C_5$ independent of $\sigma\in[0,1]$ and $u$.

To this end, we notice that the equation $u =\mathcal{T}(u, \sigma)$
  is equivalent to the Venttsel problem \eqref{VP1-sigma}--\eqref{VP2-sigma}. We apply 
Theorem \ref{Du-est} and take into account that
under the assumption \eqref{L-infty} the constant $M_1$ in \eqref{grad-est} can be evidently chosen to be independent of $\sigma$. 

Finally, we rewrite the problem \eqref{VP1-sigma}--\eqref{VP2-sigma} in the form
\begin{align*}
&-\tilde{a}^{ij}D_iD_ju + \sigma\tilde{b}^iD_iu =  -\sigma F
&\text{a.e. in}\ \Omega,\\
&-\tilde{\alpha}^{ij}d_id_ju +
\sigma\tilde\beta^{*i}d_iu + \sigma\tilde\beta_0\partial_{\mathbf{n}}u+u =
-\sigma G
&\text{a.e. on}\ \partial\Omega,
\end{align*}
with
\begin{align*}
F = \tilde{a}\big(\mu|Du|^2+\Phi\big),\qquad
G = \tilde\alpha\big(\mu|du|^2+\Theta\big)-u,
\end{align*}
and conclude by \eqref{grad-est} that $\|F\|_{n,\Omega}$ and $\|G\|_{n-1,\partial\Omega}$ are bounded uniformly with respect to $\sigma$ and $u$. Thus, \eqref{2.3-AN95'} implies
$$
\|u\|_{V_{n,n-1}(\Omega)}\le C
$$
with $C$ independent of $u$ and $\sigma$, which immediately implies the desired \textit{a~priori} estimate \eqref{AA}.
Application of the Leray--Schauder fixed point theorem completes the proof of 
Theorem~\ref{quasilinear-existence}. 
\end{proof}

Theorem~\ref{quasilinear-existence} is of conditional
 type. It reduces the solvability of the quasilinear Venttsel problem \eqref{1.1-AN95}--\eqref{1.2-AN95} to the \textit{a~priori} estimate \eqref{L-infty} for {\it any} solution $u\in V_{n,n-1}(\Omega)$ of the problem \eqref{VP1-sigma}--\eqref{VP2-sigma}, with $M_0$ independent of $u$ and $\sigma\in[0,1]$. We provide now a simple sufficient condition ensuring the validity of \eqref{L-infty}.

\begin{lemma}\label{M0}
Let $\partial\Omega\in W^2_n$. Assume that conditions \eqref{A1}, \eqref{V} and \eqref{V1} are satisfied, together with
 \eqref{A4} and \eqref{V4} where
$$
b, \Phi \in L^n(\Omega);\quad
\beta,\Theta \in L^{n-1}(\partial\Omega).
$$
Finally, suppose that for $|z|\ge z_0$ the functions $a(x,z,p)$ and $\alpha(x,z,p^*)$ with $p^*\perp {\bf n}(x)$ are weakly differentiable with respect to $z$ and
$$
a_z(x,z,p)\ge \theta_0 \Phi(x);\qquad
\alpha_z(x,z,p^*)\ge \theta_0 \max\big\{1,\Theta(x)\big\},\quad \theta_0={\text{\rm const}}>0.
$$

Then any solution $u\in V_{n,n-1}(\Omega)$ of  \eqref{VP1-sigma}--\eqref{VP2-sigma} satisfies the bound \eqref{L-infty} with $M_0=z_0+\theta_0^{-1}\eta(z_0)$.
\end{lemma}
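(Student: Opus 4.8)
The plan is to prove the upper bound $u\le M_0$; the lower bound $u\ge -M_0$ then follows by the same reasoning applied to $-u$, since replacing $z\mapsto -z$ preserves all hypotheses (the monotonicity is imposed for $|z|\ge z_0$ and the growth bounds \eqref{A4}, \eqref{V4} are even in the sign of $z$). Set $w:=u-M_0$ with $M_0=z_0+\theta_0^{-1}\eta(z_0)\ge z_0\ge 0$; the goal is $w\le 0$. Since $M_0\ge z_0$, one has $\{w>0\}\subseteq\{u>z_0\}$, which is the open set where the structural conditions are active.

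First I would linearize \eqref{VP1-sigma}--\eqref{VP2-sigma} on $\{u>z_0\}$. Writing $a(x,u,Du)=a(x,z_0,Du)+\hat c(x)(u-z_0)$ with $\hat c(x)=\int_0^1 a_z\big(x,z_0+t(u-z_0),Du\big)\,dt\ge\theta_0\Phi(x)\ge0$, substituting $u-z_0=w+\theta_0^{-1}\eta(z_0)$, and using $|a(x,z_0,Du)|\le\eta(z_0)\big(\mu|Du|^2+b|Du|+\Phi\big)$ from \eqref{A4}, the very definition of $M_0$ cancels the $\Phi$-forcing. On the boundary I split $\alpha(x,u,Du)=\alpha(x,u,du)+\tilde\beta_0\partial_{\mathbf{n}}u$ as in \eqref{beta0u} (so $0\le\tilde\beta_0\le\eta(|u|)\beta_0$ by \eqref{V}), use $\alpha_z\ge\theta_0\max\{1,\Theta\}$ together with \eqref{V4}, and discard the harmless term $(1-\sigma)M_0\ge0$; here $\max\{1,\Theta\}\ge\Theta$ cancels the $\Theta$-forcing while $\max\{1,\Theta\}\ge1$ keeps the zeroth-order coefficient bounded below. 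This yields, on $\{w>0\}$,
\begin{equation*}
-\tilde a^{ij}D_iD_jw+\hat b^iD_iw+\hat c\,w\le \sigma\eta(z_0)\mu|Dw|^2,\qquad
-\tilde\alpha^{ij}d_id_jw+\hat\beta^{*i}d_iw+\sigma\tilde\beta_0\partial_{\mathbf{n}}w+\tilde c_\alpha w\le \sigma\eta(z_0)\mu|dw|^2,
\end{equation*}
where $\tilde a^{ij}=a^{ij}(x,u)$, $\tilde\alpha^{ij}=\alpha^{ij}(x,u)$ are elliptic, $|\hat b|\le\sigma\eta(z_0)b\in L^n(\Omega)$, $|\hat\beta^*|\le\sigma\eta(z_0)\beta\in L^{n-1}(\partial\Omega)$, $\sigma\tilde\beta_0\ge0$, $\hat c\ge0$, and $\tilde c_\alpha=(1-\sigma)+\sigma\int_0^1\alpha_z\,dt\ge\min\{1,\theta_0\}>0$.

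The quadratic gradient terms on the right are the main obstacle, and they are eliminated by a Hopf-type substitution $v:=\lambda^{-1}\big(e^{\lambda w}-1\big)$ with $\lambda:=\sigma\eta(z_0)\mu/\nu$ (set $v=w$ if $\lambda=0$). Since $w\in V_{n,n-1}(\Omega)\cap L^\infty$, also $v\in V_{n,n-1}(\Omega)$, and $v>0\Leftrightarrow w>0$. From $D_iD_jv=e^{\lambda w}\big(D_iD_jw+\lambda D_iw\,D_jw\big)$ and $\tilde a^{ij}D_iw\,D_jw\ge\nu|Dw|^2$, the choice of $\lambda$ makes the quadratic term cancel exactly, and the \emph{same} $\lambda$ works on $\partial\Omega$ because $\mu$, $\eta(z_0)$, $\nu$ are common to both equations. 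Using moreover $e^{\lambda w}\hat c\,w\ge\hat c\,v$ and $e^{\lambda w}\tilde c_\alpha w\ge\tilde c_\alpha v$ on $\{w\ge0\}$ (from the elementary inequality $\lambda w\,e^{\lambda w}\ge e^{\lambda w}-1$), the system transforms into a genuinely \emph{linear} Venttsel system for $v$,
\begin{equation*}
-\tilde a^{ij}D_iD_jv+\hat b^iD_iv+\hat c\,v\le0,\qquad
-\tilde\alpha^{ij}d_id_jv+\hat\beta^{*i}d_iv+\sigma\tilde\beta_0\partial_{\mathbf{n}}v+\tilde c_\alpha v\le0,
\end{equation*}
valid on $\{v>0\}$ and $\{v>0\}\cap\partial\Omega$, with $\hat c\ge0$, exterior field $\sigma\tilde\beta_0\ge0$, $\tilde c_\alpha\ge\min\{1,\theta_0\}>0$, and lower-order coefficients in $L^n(\Omega)$, $L^{n-1}(\partial\Omega)$.

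This is exactly the configuration of Theorem~\ref{global-max-principle}. I would then conclude by running the contradiction argument in its proof: if $\max_{\overline\Omega}v>0$, the maximum is attained where $v>0$, hence where $u>z_0$, and the local Aleksandrov-type estimate of \cite{AN95a} used there only invokes the equations in a neighborhood of that point, a neighborhood contained in $\{u>z_0\}$ where the two displayed inequalities hold. This forces $v\le0$, i.e. $w\le0$ and $u\le M_0$; the symmetric bound from below gives \eqref{L-infty} with $M_0=z_0+\theta_0^{-1}\eta(z_0)$. The only delicate points are the compatibility of the quadratic growth across the interior and boundary equations (settled by the common $\lambda$) and the localization of the maximum principle to the positivity set (settled because a positive maximum lies in $\{u>z_0\}$).
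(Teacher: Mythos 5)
Your proof is correct, and its skeleton coincides with the paper's: linearize the $z$-dependence on $\Omega^+=\{u>z_0\}$ via Newton--Leibniz and the lower bounds on $a_z$, $\alpha_z$; split $\alpha(x,u,Du)=\alpha(x,u,du)+\tilde\beta_0\partial_{\mathbf n}u$ with $\tilde\beta_0$ as in \eqref{beta0u}; let the choice $M_0=z_0+\theta_0^{-1}\eta(z_0)$ cancel the $\Phi$- and $\Theta$-forcing; observe $1-\sigma+\sigma\theta_0\max\{1,\Theta\}\ge\min\{1,\theta_0\}>0$; and finish by the Aleksandrov--Bakel'man argument of Theorem~\ref{global-max-principle}, localized (exactly as the paper does, and legitimately, since a positive maximum of the comparison function lies in $\Omega^+$ by continuity and the estimate from \cite{AN95a} is local with $\rho\to0$). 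Where you genuinely diverge is the treatment of the quadratic gradient terms. You neutralize $\sigma\eta(z_0)\mu|Dw|^2$ and $\sigma\eta(z_0)\mu|dw|^2$ by the Hopf substitution $v=\lambda^{-1}(e^{\lambda w}-1)$, $\lambda=\sigma\eta(z_0)\mu/\nu$, correctly noting that one $\lambda$ serves both equations because $\mu$, $\eta(z_0)$ and the ellipticity constant $\nu$ are common to \eqref{A1}/\eqref{A4} and \eqref{V1}/\eqref{V4}, and that $we^{\lambda w}\ge v$ preserves the sign of the zeroth-order terms. The paper instead never changes the unknown: it absorbs the quadratic terms into \emph{first-order} terms with coefficients
$$
b^i(x)=-D_iu(x)\Big(\mu+\frac{b(x)}{|Du(x)|}\Big)\in L^n(\Omega),\qquad
\beta^{*i}(x)=-d_iu(x)\Big(\mu+\frac{\beta(x)}{|du(x)|}\Big)\in L^{n-1}(\partial\Omega),
$$
which is admissible precisely because the solution is \emph{a~priori} in $V_{n,n-1}(\Omega)$, so $Du\in L^p(\Omega)$ and $du\in L^p(\partial\Omega)$ for every finite $p$; it then compares $u$ directly with the constant $M_0$ via $\mathcal L(u-M_0)\le0$, $\mathcal B(u-M_0)\le0$ on $\Omega^+$. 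The paper's absorption is shorter and keeps the comparison function trivially in the right space; your substitution is the more classical Ladyzhenskaya--Ural'tseva-style device and is slightly heavier in bookkeeping (you must verify $v\in V_{n,n-1}(\Omega)$, which again uses $Dw\in L^{2n}(\Omega)$ and $dw\in L^{2(n-1)}(\partial\Omega)$, and track the $\sigma$-factors in $\hat c$, $\tilde c_\alpha$), but it is equally rigorous; your reflection argument $z\mapsto-z$ for the lower bound is also sound, since the hypotheses on $a_z$, $\alpha_z$ are imposed for $|z|\ge z_0$ and \eqref{V}, \eqref{A4}, \eqref{V4} are invariant under $(z,p)\mapsto(-z,-p)$ combined with $a\mapsto-a$, $\alpha\mapsto-\alpha$.
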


\begin{proof}
We will get the estimate $u\le M_0$, the proof of $u\ge -M_0$ is similar.

Suppose that the set $\Omega^+=\{u>z_0\}$ is non-empty. Then, using \eqref{A4} and the lower bound for $a_z$, we can write a.e. in $\Omega\cap\Omega^+$ that
\begin{align*}
a(x,u,Du) &\ =a(x,z_0,Du)+\int_0^1 a_z\big(x,tu(x)+(1-t)z_0,Du(x)\big)(u-z_0)\,dt \\
&\ \ge -\eta(z_0)\Big(\mu |Du(x)|^2+b(x)|Du(x)|+\Phi(x)\Big)+\theta_0 \Phi(x)(u-z_0).
\end{align*}
Therefore, \eqref{VP1-sigma} yields
\begin{align}
\label{m1}
\mathcal{L}u(x) :=&\ -\tilde{a}^{ij}(x)D_iD_ju +\sigma\eta(z_0) b^i(x)D_iu +\sigma\theta_0\Phi(x)u\\
\nonumber
\le&\ \big(\eta(z_0)+\theta_0z_0\big)\,\sigma\Phi(x)\\
\nonumber
=&\ \mathcal{L}\big(\theta_0^{-1}\eta(z_0)+z_0\big),
\end{align}
where 
$$
\tilde{a}^{ij}(x):={a}^{ij}\big(x,u(x)\big),\qquad b^i(x):=-D_iu(x)\Big(\mu+\dfrac{b(x)}{|Du(x)|}\Big)\in L^n(\Omega).
$$

Further on, assuming without loss of generality that $\Theta\ge 1$, we write $\alpha(x,u,Du)=\alpha(x,u,du)+\tilde\beta_0(x)\partial_{\mathbf{n}}u$ as in the proof of Theorem~\ref{Du-est} with $\tilde\beta_0(x)$ given by \eqref{beta0u}. Using \eqref{V4} and the lower bound for $\alpha_z$, we obtain a.e. on $\partial\Omega\cap\Omega^+$ that
\begin{align*}
\alpha(x,u,Du) =&\ \alpha(x,z_0,du)+\int_0^1 \alpha_z\big(x,tu(x)+(1-t)z_0,du(x)\big)(u-z_0)\,dt \\
&\ +\tilde\beta_0(x)\partial_{\mathbf{n}}u \\
\ge&\  -\eta(z_0)\Big(\mu |du(x)|^2+\beta(x)|du(x)|+\Theta(x)\Big)+\theta_0 \Theta(x)(u-z_0)\\
&\ +\tilde\beta_0(x)\partial_{\mathbf{n}}u.
\end{align*}
This way, \eqref{VP2-sigma} implies
\begin{align}
\label{m2}
\mathcal{B}u(x)  :=&\ -\tilde{\alpha}^{ij}(x)d_id_ju+\sigma\eta(z_0) \beta^{*i}(x)d_iu+\sigma \tilde\beta_0(x)\partial_{\mathbf{n}}u\\
\nonumber
&\  +\big(1-\sigma+\sigma\theta_0\Theta(x)\big)u\\
\nonumber
\le&\ \big(\eta(z_0)+\theta_0z_0\big)\,\sigma\Theta(x)\le \mathcal{B}\big(\theta_0^{-1}\eta(z_0)+z_0\big)
\end{align}
with 
$$
\tilde{\alpha}^{ij}(x):={\alpha}^{ij}\big(x,u(x)\big), \quad\beta^{*i}(x):=-d_iu(x)\Big(\mu+\dfrac{\beta(x)}{|du(x)|}\Big)\in L^{n-1}(\partial\Omega).
$$
It is to be noted  that $\tilde{\beta}_0\ge0$ as it follows from \eqref{V}, while $1-\sigma+\sigma\theta_0\Theta(x)\ge \min\{1,\theta_0\}>0$.

Setting $M_0:=\theta_0^{-1}\eta(z_0)+z_0$, we have from \eqref{m1} and \eqref{m2} that
$$
\CL(u-M_0)\le 0\quad \text{a.e. in}\ \Omega\cap\Omega^+,\qquad
\CB(u- M_0)\le 0\quad \text{a.e. on}\ \partial\Omega\cap\Omega^+.
$$
To get the claim, we suppose the contrary, that is, let $\max_{\overline\Omega}\,(u-M_0)>0$. Then the function $u(x)-M_0$ achieves its maximum at a point $x^0 \in \Omega^+$ and the Aleksandrov--Bakel'man maximum principle implies that $x^0 \in \partial\Omega\cap\Omega^+$. This leads to a  contradiction as in Theorem~\ref{global-max-principle} and that completes the proof.
\end{proof}

It is to be noted that the monotonicity hypotheses on  $a(x,z,p)$ and $\alpha(x,z,p^*)$ in Lemma~\ref{M0} could be replaced with suitable sign-conditions of these terms with respect to $z.$

\section{Concluding remarks and open problems}
\label{sec5}

\textbf{1.}
In Sections~\ref{sec3} and \ref{sec4} we dealt
with the multidimensional case $n\geq 3.$ 
If $n=2$, then the problem is simpler because the boundary equation is essentially an ordinary differential equation. So, in this case we can allow $q=1$ in \eqref{pq-cond}. Moreover, for both linear and quasilinear problems, the principal coefficient in the \textit{boundary} operator can be merely measurable. All the results obtained in Sections~\ref{sec3} and \ref{sec4} remain valid in the 2D case.

\smallskip

\textbf{2.}
The machinery  developed in Sections~\ref{sec3} and \ref{sec4} runs without essential changes also for \textit{two-phase} linear and quasilinear Venttsel problems with discontinuous coefficients (cf. \cite{AN00a,AN01}).

\smallskip

\textbf{3.}
Modulo technical details, the results of Section~\ref{sec3}  can be extended to operators with \textit{partially VMO} principal coefficients in the spirit of \cite{Kr07}. Moreover, since all statements in Section~\ref{sec4} use the \textit{VMO}-hypothesis only via the linear theory, assumptions \eqref{A2} and \eqref{V2}  can be weakened in a similar manner. 

\smallskip

\textbf{4.}
The integrability assumptions \eqref{L3}, \eqref{L4}, \eqref{B3}--\eqref{B5} on the 
lower-order coefficients of the linear operators $\mathcal{L}$ and $\mathcal{B}$ are \textit{sharp} for the validity of the results obtained in Section~\ref{sec3}. The technique employed in the proof of Theorem~\ref{apriori-estimate} can be successfully adopted to Dirichlet or  oblique derivative problems for elliptic operator with \textit{VMO} coefficients in order to generalize the results in \cite{CFL91,CFL93} and \cite{FaP96a,MP98}. 

\smallskip

\textbf{5.}
The assumptions \eqref{A4} and \eqref{V4} allow not only quadratic gradient growth but also linear one with an \textit{unbounded} coefficient. The approach used in the proof of Theorem~\ref{Du-est}, applied to quasilinear Dirichlet or regular oblique derivative problem, could give generalization of the results from \cite{P95} and \cite{FaP96b}.

\subsection*{Open problems and directions for  further research}
The results in Section~\ref{sec4} regard quasilinear elliptic operators and quasilinear Venttsel boundary conditions with principal coefficients depending on $x$ and $u$ only. In order to get statements at the same level of generality as in classical papers of O.A.~Ladyzhenskaya and N.N.~Ural'tseva (see the survey \cite{LU86}), it is necessary to allow dependence also   on $Du$ and $du$, respectively, in the principal coefficients, and this is a fascinating open problem.

\medskip

Another interesting problem is to  extend the results of \cite{AN02} to the case of discontinuous principle coefficients.
If the support of the Venttsel condition in the two-phase  problem is the interface meeting  transversally the exterior boundary of the medium, then both parts of the medium are domains with smooth closed edges. 
This requires to study the problem in weighted
Sobolev spaces where the weight is a power of the distance from a point to the edge (see  \cite{AN02} for more details). 

\medskip

Very difficult open problems appear in the case where the  principal part of the boundary operator 
is \textit{positively semidefinite.} 
There occur three different types of degeneracy:

\smallskip

\textit{1) Partial}, when the matrix $\{\alpha^{ij}(x)\}$ is not vanishing but has zero as an eigenvalue. There are some results due to 
Luo and Trudinger~\cite{LuT91}
regarding the linear case, but the quasilinear one is completely open.

\smallskip

\textit{2) Complete}, where the set
$\mathcal{D}=\big\{x\in\partial\Omega\colon\ \alpha^{ij}(x)=0\big\}$ is non-empty, but $\beta_0\neq0$ there. Now the boundary value problem is of oblique derivative type on $\mathcal{D}$, and 
some satisfactory results are available in \cite{AN97,AN98,N04}.  We refer the reader also to \cite[Chapter~6]{Pan00} where similar problems are treated with the machinery of pseudo-differential operators and H\"ormander vector fields.  It is a challenging task to overcome the continuity of the principal coefficients in that case.

\smallskip

\textit{3) Over-degeneration}, when $\mathcal{D}\neq\emptyset$ and $\mathcal{D}_0=\big\{x\in\mathcal{D}\colon\ \beta_0(x)=0\big\}$ is \textit{non-empty}. Now the Venttsel boundary condition is prescribed in terms only of \textit{tangential} derivatives, 
${\beta^*}^i(x)d_iu+\gamma(x)u,$
and the corresponding boudary value problem is no more regular (see \cite{EK69,PP97,Pan00,P05,P06,P08}).
 The properties of the problem, known as Poincar\'e problem,  depend essentially on the behaviour of the vector field $\boldsymbol{\beta^*}$ near the set $\mathcal{D}_0,$ and new effects appear such as loss of smoothness, loss of Fredholmness, etc. Nothing is known, instead, for the quasilinear Poincar\'e problem.

\subsection*{Acknowledgements} 
This work was partly supported by RUDN University Program 5-100 (D.A.), by RFBR grant 18-01-00472 (D.A. and A.N.), by the grant AP05130222 of Kazakhstan Ministry of Education and Science (A.N.). The work of D.K.P. was supported by the Italian Ministry of Education, University and Research under the Programme “Department of Excellence” Legge 232/2016 (Grant No. CUP-D94I18000260001). The research of L.G.S. was partially supported by the Project GNAMPA 2020 “Elliptic operators with unbounded and singular coefficients on weighted $L^p$ spaces”.

A part of this work was done during the visits of D.A. and A.N. to Politecnico di Bari in 2018, partially supported by Visiting Professorship Program~2018 of Politecnico di Bari, and by the St. Petersburg University (project 34827971), respectively.\smallskip

The authors are indebted to the referees for the valuable remarks.




\end{document}